\newtheorem{thm}{Theorem}[section]
\newtheorem{lem}[thm]{Lemma}
\newtheorem{cor}[thm]{Corollary}
\newtheorem{pro}[thm]{Proposition}
\newtheorem{con}[thm]{Conjecture}
\newtheorem{obs}[thm]{Observation}
\theoremstyle{definition}
\newtheorem{exa}[thm]{Example}
\newtheorem{prob}[thm]{Problem}
\theoremstyle{remark}
\begin{document}
\title{A relation between the cube polynomials of partial cubes and the clique polynomials of their crossing graphs}

\footnotetext[1]{The work is partially supported by the National Natural Science Foundation of China (Grant No. 12071194, 11571155, 11961067).}

\author{Yan-Ting Xie$^1$, ~Yong-De Feng$^{2}$, ~Shou-Jun Xu$^1, \thanks{Corresponding author. E-mail address: ~shjxu@lzu.edu.cn (S.-J. Xu).}$}

\date{\small $^1$ School of Mathematics and Statistics, Gansu Center for Applied Mathematics, \\Lanzhou University,
Lanzhou, Gansu 730000, China\\
$^2$ College of Mathematics and Systems Science, Xinjiang University, Urumqi,\\ Xinjiang 830046, P.R. China}

\maketitle
\begin{abstract}
Partial cubes are the graphs which can be embedded into hypercubes. The {\em cube polynomial} of a graph $G$ is a counting polynomial of induced hypercubes of $G$, which is defined as $C(G,x):=\sum_{i\geqslant 0}\alpha_i(G)x^i$, where $\alpha_i(G)$ is the number of induced $i$-cubes (hypercubes of dimension $i$) of $G$. The {\em clique polynomial} of $G$ is defined as $Cl(G,x):=\sum_{i\geqslant 0}a_i(G)x^i$, where $a_i(G)$ ($i\geqslant 1$) is the number of $i$-cliques in $G$ and $a_0(G)=1$. Equivalently, $Cl(G, x)$ is exactly the independence polynomial of the complement $\overline{G}$ of $G$. The {\em crossing graph} $G^{\#}$ of a partial cube $G$ is the graph whose vertices are corresponding to the $\Theta$-classes of $G$, and two $\Theta$-classes are adjacent in $G^{\#}$ if and only if they cross in $G$. In the present paper, we prove that for a partial cube $G$, $C(G,x)\leqslant Cl(G^{\#}, x+1)$ and the equality holds if and only if $G$ is a median graph. Since every graph can be represented as the crossing graph of a median graph [SIAM J. Discrete Math., 15 (2002) 235--251], the above necessary-and-sufficient result shows that the study on the cube polynomials of median graphs can be transformed to the one on the clique polynomials of general graphs (equivalently, on the independence polynomials of their complements). In addition, we disprove the conjecture that the cube polynomials of median graphs are unimodal.

\setlength{\baselineskip}{17pt}
{} \vskip 0.1in \noindent%
\textbf{Keywords:} Partial cubes; Cube polynomials; Crossing graphs; Clique polynomials; Median graphs.
\end{abstract}
\section{Introduction}
In this paper all graphs we consider are undirected, finite and simple. A {\em hypercube of dimension $n$} (or {\em $n$-cube} for short), denoted by $Q_n$, is a graph whose vertex set is corresponding to the set of 0-1 sequences $x_1x_2\cdots x_n$ with $x_i\in \{0,1\}$, $i=1, 2, \cdots, n$. Two vertices are adjacent if the corresponding 0-1 sequences differ in exactly one digit. $G$ is called {\em partial cube} if it is isomorphic to an isometric subgraph of $Q_n$ for some $n$.

It is known that a relation $\Theta$ on the edge set, called the Djokovi\'c-Winkler relation (see \cite{dj73,w84}), plays an important role in studying the partial cubes. This relation was used by Winkler \cite{w84} to characterize the partial cubes as those bipartite graphs for which $\Theta$ is an equivalence relation on edges. Its equivalence classes are called {\em $\Theta$-classes}. Let $G$ be a partial cube, $\theta_1,\theta_2$ two $\Theta$-classes. We say $\theta_1$ and $\theta_2$ {\em cross} in $G$ if $\theta_2$ occurs in both the components of $G-\theta_1$. The {\em crossing graph} $G^{\#}$ of $G$ is the graph whose vertices are corresponding to the $\Theta$-classes of $G$, and $\theta_1$ and $\theta_2$ are adjacent in $G^{\#}$ if and only if they cross in $G$. The crossing graph was introduced by Bandelt and Dress \cite{bd92} under the name of incompatibility graph and extensively studied by Klav\v zar and Mulder \cite{km02}.


Median graphs are an important subclass of partial cubes, which have many applications in such diverse areas as evolutionary theory, chemistry, literary history, location theory, consensus theory, and computer science. For the structural properties of median graphs, we refer readers to the works cited in references \cite{bv87,km99,mu78,mu80a,mu80b,mu90,mu11}, as well as Chapter 12 of the book \cite{hik11}.


To study some properties of median graphs, Bre\v sar et al. \cite{bks03} introduced a counting polynomial of hypercubes of a graph $G$, called the {\em cube polynomial},  as follows:
\begin{equation*}
C(G,x):=\sum_{i\geqslant 0}\alpha_i(G)x^i,
\end{equation*}
where $\alpha_i(G)$ is the number of induced $i$-cubes of $G$. Specifically,  $\alpha_0(G)$ denotes the number of vertices and $\alpha_1(G)$ denotes the number of edges in $G$.

An {\em $i$-clique} of a graph $G$ is a complete subgraph with $i$ vertices. Let's define $a_i(G)$ as the number of $i$-clique in $G$ for $i\geqslant 1$ and $a_0(G)=1$. The {\em clique polynomial} of a graph $G$, introduced by Hoede and Li \cite{hl94},  is defined as follows:
\begin{equation*}
Cl(G,x):=\sum_{i\geqslant 0}a_i(G)x^i.
\end{equation*}

Let $P(x)=\sum\limits_{i=0}^mp_ix^i$ and $Q(x)=\sum\limits_{i=0}^nq_ix^i$ be two polynomials with nonnegative coefficients. We say $P(x)\leqslant Q(x)$ if $m\leqslant n$ and $p_i\leqslant q_i$ for $0\leqslant i\leqslant m$. If $P(x)\leqslant Q(x)$ and $P(x)\neq Q(x)$, we say $P(x)<Q(x)$.

Let $G$ be a median graph. Zhang et al. \cite{zss13} proved that $C(G,x)=\sum\limits_{i=0}^mb_i(G)(x+1)^i$ where $b_0(G)=1$ and $b_i(G)$ is a positive integer for each $i$ with $1\leqslant i\leqslant m$. They further provided an expression of $b_i(G)$. 
In the present paper, we reveal a combinatorial meaning of $b_i(G)$ as the number of $i$-cliques of $G^{\#}$, i.e., $b_i(G)=a_i(G^{\#})$. Moreover, we establish the following relationship between the cube polynomials of partial cubes and the clique polynomials of their crossing graphs.

\begin{thm}\label{thm:CubePandCliqueP}
Let $G$ be a partial cube and $G\neq K_1$. Then
\begin{equation}\label{eq:CubePandCliqueP}
C(G,x)\leqslant Cl(G^{\#},x+1)
\end{equation}
and the equality holds if and only if $G$ is a median graph.
\end{thm}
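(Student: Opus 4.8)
The plan is to argue by induction on the number of $\Theta$-classes, using the classical fact that every partial cube arises from $K_1$ by a sequence of isometric expansions and every median graph by convex expansions. Write $G'$ for the expansion of $G$ along a cover $(G_1,G_2)$ with $G_0=G_1\cap G_2$, let $M$ be the new $\Theta$-class (the matching between the two copies of $G_0$), and let $\mathcal{E}$ be the set of old classes meeting $G_0$, with $H_0:=G^{\#}[\mathcal{E}]$ the induced subgraph of $G^{\#}$ on $\mathcal{E}$. First I would establish two parallel recursions. Since every induced cube of $G'$ lies in $G_1$, lies in $G_2$, or crosses $M$ (the last being doubled induced $(k-1)$-cubes of $G_0$), inclusion--exclusion gives
\[
C(G',x)=C(G,x)+(1+x)\,C(G_0,x).
\]
On the other side, the expansion adds to $G^{\#}$ exactly one vertex $M$, adjacent precisely to the classes in $\mathcal{E}$, leaving all old adjacencies intact; counting the cliques through $M$ then yields
\[
Cl(G'^{\#},x+1)=Cl(G^{\#},x+1)+(1+x)\,Cl(H_0,x+1).
\]

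The decisive comparison is between $Cl(G_0^{\#},x+1)$ and $Cl(H_0,x+1)$. I would first record the always-valid inclusion $G_0^{\#}\subseteq H_0$ on the vertex set $\mathcal{E}$: if two classes $\theta,\eta$ cross inside $G_0$, then the two parts of $G_0-\theta$ lie on opposite sides of $\theta$ in $G$, so $\eta$ has edges on both sides of $\theta$ in $G$ and $\theta,\eta$ cross in $G$. Hence $a_i(G_0^{\#})\leqslant a_i(H_0)$ for all $i$, and so $Cl(G_0^{\#},x+1)\leqslant Cl(H_0,x+1)$. Combining the two recursions with the inductive hypotheses $C(G,x)\leqslant Cl(G^{\#},x+1)$ and $C(G_0,x)\leqslant Cl(G_0^{\#},x+1)$ gives $C(G',x)\leqslant Cl(G'^{\#},x+1)$, establishing the inequality (the base cases $K_1,K_2$ being immediate).

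For the equality characterization, consider first the direction that $G$ median implies equality. Here all expansions are convex, and I would prove $H_0=G_0^{\#}$ using the Helly property of convex sets in median graphs: for crossing classes $\theta,\eta$ meeting the convex set $G_0$, the convex sets $G_0,\theta^{+},\eta^{+}$ are pairwise intersecting, hence have a common vertex, so every quadrant survives inside $G_0$ and $\theta,\eta$ already cross in $G_0$. Then the two recursions coincide, and induction gives $C(G,x)=Cl(G^{\#},x+1)$; this simultaneously identifies $b_i(G)=a_i(G^{\#})$ as announced. Conversely, assuming equality, each inequality used above must be tight at every step, in particular $Cl(G_0^{\#},x+1)=Cl(H_0,x+1)$, i.e. $G_0^{\#}=H_0$, at each expansion; I would then argue that with $G$ and $G_0$ already median this numerical identity forces $G_0$ to be convex, whence $G$ is median by the convex-expansion theorem.

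The \textbf{main obstacle} is twofold. The two recursions rest on the precise behaviour of the crossing graph under expansion --- that $M$ becomes adjacent to exactly the classes meeting $G_0$ while no old crossing is created or destroyed --- and verifying this cleanly requires a careful halfspace/quadrant analysis of how the duplicated copy of $G_0$ sits relative to each old $\Theta$-class. The genuinely hard point, however, is the converse implication: I must upgrade the purely numerical condition $G_0^{\#}=H_0$ holding at every expansion step into actual convexity of the expansions, thereby excluding a non-convex isometric expansion that nonetheless keeps $G_0^{\#}=H_0$. I expect to close this gap by again invoking the Helly property, showing that $G_0^{\#}=H_0$ together with the medianness of $G$ and $G_0$ forces $G_0$ to be convex in the expanded graph.
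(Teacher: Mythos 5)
Your induction over general isometric expansions breaks at the expansion step, and a very small example shows it. Take $G=C_6$ with vertices $v_1,\dots ,v_6$ and the isometric cover $G_1=v_1v_2v_3v_4$, $G_2=v_4v_5v_6v_1$, so that $G_0=G_1\cap G_2=\{v_1,v_4\}$ is edgeless and the expansion is $G'=C_8$. Three of your claims fail here. (i) $G_0$ need not be a partial cube (here it is disconnected), so neither $G_0^{\#}$ nor the inductive hypothesis $C(G_0,x)\leqslant Cl(G_0^{\#},x+1)$ is available at all. (ii) Your description of $(G')^{\#}$ is wrong: the new class $M$ is adjacent in $(G')^{\#}$ to every old class occurring in \emph{both} $G_1$ and $G_2$, not only to the classes in $\mathcal{E}$ meeting $G_0$. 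In the example $\mathcal{E}=\emptyset$, so $H_0$ is the null graph and your recursion predicts $Cl((G')^{\#},x+1)=(x+2)^3+(x+1)$, whereas in fact $(C_8)^{\#}=K_4$ (any two $\Theta$-classes of $C_8$ lie on the isometric cycle $C_8$ and hence cross), giving $(x+2)^4$. (iii) Consequently the pivotal inequality your chain needs, $C(G_0,x)\leqslant Cl(H_0,x+1)$, fails outright: $C(G_0,x)=2>1=Cl(H_0,x+1)$. (Your first recursion also tacitly assumes the cover is cubical --- $C(G',x)=C(G,x)+(1+x)C(G_0,x)$ needs every induced cube of $G$ to lie in $G_1$ or $G_2$ --- though that slack at least points in the favorable direction.) The paper never expands a general partial cube: for the equality direction it inducts along Mulder's \emph{peripheral} convex expansions of median graphs, where $G_2=G_0$, the cover is trivially cubical, $G_0$ is convex (hence itself a median partial cube), and the new vertex's neighbourhood in the crossing graph really is $\mathcal{E}$. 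Restricted to that setting, your argument --- including the Helly-property proof that crossing classes occurring in a convex subgraph cross inside it --- is sound and is essentially the paper's Lemmas on crossing in convex subgraphs.

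The second, independent gap is the converse (equality implies median), which you yourself flag as open: upgrading tightness of your (already broken) recursions to convexity of every expansion step is exactly the part with no proof, and the intermediate graphs in an arbitrary expansion sequence for $G$ need not be median, so "with $G$ and $G_0$ already median" is not something you may assume. The paper handles non-median $G$ by a completely different device: inside $Q_n$ it builds a median completion $G^{+}\supseteq G$ by repeatedly adjoining the convex hulls (taken in $Q_n$) of maximal isometric cycles until the process stabilizes; it then proves that $G^{+}$ is median via Bandelt's criterion (convex hulls of isometric cycles are hypercubes) and, crucially, that $(G^{+})^{\#}=G^{\#}$, i.e.\ the completion creates no new $\Theta$-classes and no new crossings. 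For non-median $G$ one then gets $C(G,x)<C(G^{+},x)=Cl((G^{+})^{\#},x+1)=Cl(G^{\#},x+1)$, with strictness already in the constant term since $V(G)\subsetneq V(G^{+})$. Without this completion, or a genuine substitute for it, the "only if" half of the theorem is missing from your proposal, and the "if" half as written is refuted by the $C_6\to C_8$ example above.
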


For a general graph $G$, the {\em simplex graph} $S(G)$ of $G$ is defined as the graph whose vertices are the cliques of $G$ (including the empty graph), two vertices being adjacent if, as cliques of $G$, they differ in exactly one vertex (see \cite{bv91,km02}). The simplex graph $S(G)$ of $G$ is a median graph. About the crossing graphs of median graphs, Klav\v zar and Mulder derived the following theorem.

\begin{thm}{\em\cite{km02}}\label{thm:EveryGraphisaCrossingGraph}
Every graph can be represented as the crossing graph of some median graph. Specifically, for any graph $G$, we have $G=S(G)^{\#}$.
\end{thm}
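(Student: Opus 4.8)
The plan is to exhibit the canonical isometric embedding of $S(G)$ into a hypercube, read off its $\Theta$-classes, and then translate the crossing condition into an adjacency condition in $G$. Write $V(G)=\{v_1,\dots,v_n\}$ and represent each clique $C$ of $G$ (i.e.\ each vertex of $S(G)$, including $\emptyset$) by its characteristic vector $\chi_C\in\{0,1\}^n$. First I would verify that $C\mapsto\chi_C$ is an isometric embedding of $S(G)$ into $Q_n$: adjacent cliques differ in exactly one vertex, so edges of $S(G)$ map to edges of $Q_n$, and for arbitrary cliques $C,C'$ one can pass from $C$ to $C'$ by first deleting the vertices of $C\setminus C'$ one at a time and then inserting the vertices of $C'\setminus C$ one at a time, every intermediate set being a subset of a clique and hence itself a clique. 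This produces a path of length $|C\triangle C'|=d_{Q_n}(\chi_C,\chi_{C'})$, confirming that $S(G)$ is a partial cube (as already noted, it is in fact a median graph).

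Next I would identify the $\Theta$-classes. On an isometric subgraph of $Q_n$ the relation $\Theta$ coincides with ``flipping the same coordinate'', so each coordinate $v\in V(G)$ determines at most one $\Theta$-class $\theta_v$, consisting of all edges $\{C,\,C\cup\{v\}\}$ of $S(G)$. Since $\emptyset$ and $\{v\}$ are always cliques, $\{\emptyset,\{v\}\}$ is an edge of $\theta_v$, so every coordinate is actually used and $\theta_v\neq\emptyset$ for each $v$. This yields a bijection between $V(G)$ and the $\Theta$-classes of $S(G)$, that is, between $V(G)$ and $V(S(G)^{\#})$; it then remains only to match edges with crossings.

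The key step is the crossing analysis. Removing $\theta_u$ splits $S(G)$ into the two halfspaces $W^0_u=\{C:u\notin C\}$ and $W^1_u=\{C:u\in C\}$, and because each edge of $\theta_v$ with $v\neq u$ keeps the $u$-coordinate fixed, every such edge lies entirely in one halfspace. The edge $\{\emptyset,\{v\}\}$ always lies in $W^0_u$, so $\theta_v$ meets $W^0_u$ unconditionally; hence $\theta_u$ and $\theta_v$ cross if and only if $\theta_v$ also has an edge in $W^1_u$, i.e.\ if and only if there is a clique $C$ with $u\in C$, $v\notin C$ and $C\cup\{v\}$ a clique. I would show this condition is equivalent to $uv\in E(G)$: if $uv\in E(G)$ then $C=\{u\}$ works, while conversely any clique containing both $u$ and $v$ forces $u$ and $v$ to be adjacent. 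Therefore $\theta_u$ and $\theta_v$ cross in $S(G)$ exactly when $u$ and $v$ are adjacent in $G$, which together with the vertex bijection gives $S(G)^{\#}=G$.

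I expect the only delicate point to be the description of $\Theta$ on $S(G)$ — specifically the standard partial-cube facts that on an isometric subgraph of a hypercube $\Theta$-equivalence is exactly ``flipping the same coordinate'' and that deleting a $\Theta$-class produces precisely the two coordinate halfspaces. Once this structure is in hand, the crossing-versus-adjacency equivalence reduces to the elementary observation that $\{u,v\}$ is a clique of $G$ if and only if $uv\in E(G)$.
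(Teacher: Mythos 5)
Your proof is correct, and it is worth noting that the paper itself offers no proof of this statement: it is imported verbatim from Klav\v zar and Mulder \cite{km02}, so the only comparison available is with the original argument there. Your route is a sound, self-contained verification. The deletion-then-insertion path correctly establishes that the characteristic-vector map is an isometric embedding (every intermediate set is a subset of a clique, hence a clique, and the path has length $|C\triangle C'|$); the identification of $\Theta$-classes with coordinates is exactly the standard fact recorded in Observation \ref{obs:ThetaRestrictonSubgraph}, that $\Theta$ restricts along isometric embeddings, combined with the description of $\Theta$ on $Q_n$ itself; and the halfspace computation $W^0_u=\{C: u\notin C\}$, $W^1_u=\{C: u\in C\}$ reduces crossing to the existence of a clique $C$ with $u\in C$ and $C\cup\{v\}$ a clique, i.e.\ to $uv\in E(G)$. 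One small point in your favor: although you only verify that $\theta_v$ occurs on both sides of $\theta_u$, your witnesses (the edge $\{\emptyset,\{v\}\}$ in $W^0_u$ and the edge $\{C,C\cup\{v\}\}$ in $W^1_u$) in fact populate all four quadrants $W^i_u\cap W^j_v$, so both of the paper's equivalent definitions of crossing are satisfied directly. The difference from \cite{km02} is methodological: Klav\v zar and Mulder lean on median-graph structure, getting the crossing criterion from alternating $4$-cycles such as $\emptyset,\{u\},\{u,v\},\{v\}$ (compare Lemma \ref{lem:Crossin4Cycle} in this paper), whereas your argument uses only partial-cube facts about coordinates and halfspaces and never needs that $S(G)$ is median. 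Both proofs ultimately pivot on the same elementary equivalence, namely that $\{u,v\}$ is a clique of $G$ if and only if $uv\in E(G)$; your version is slightly more general in spirit, while theirs is shorter once the median machinery is in place.
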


Combined with the fact that the independence polynomial of a graph is equal to the clique polynomial of its complement, Theorem \ref{thm:CubePandCliqueP} indicates  that the investigation of the cube polynomials of median graphs can be transformed to the study on the clique polynomials and the independence polynomials of general graphs.

A sequence $(s_1,s_2,\cdots,s_n)$ of nonnegative real numbers is {\em unimodal} if
\begin{equation*}
s_1\leqslant s_2\leqslant\cdots\leqslant s_m\geqslant\cdots\geqslant s_{n-1}\geqslant s_n
\end{equation*}
for some integer $1\leqslant m\leqslant n$ and {\em log-concave} if 
\begin{equation*}
s_{i-1}s_{i+1}\leqslant s^2_i,\qquad\mbox{for }2\leqslant i\leqslant n-1.
\end{equation*}
The sequence is said to {\em have no internal zeros} if there are not three indices $i < j < k$ such that $s_i,s_k>0$ and $s_j=0$. In particular, the positive sequences have no internal zeros. It is well-known that a log-concave sequence with no internal zeros is unimodal \cite{b89}. A polynomial is called {\em unimodal} (resp. {\em log-concave}) if the sequence of its coefficients is unimodal (resp. log-concave). By the definitions, the coefficient sequences of the cube polynomials, the clique polynomials and the independence polynomials of graphs are positive. Thus, for these graph polynomials, the log-concavity is stronger than the unimodality. 

When studying graph polynomials, the unimodality and the log-concavity are always considered. For instance, it has been proved that the matching polynomials of graphs \cite{hl72}, the independence polynomials of claw-free graphs \cite{h90} and the signless chromatic polynomials of graphs \cite{h12} are log-concave, but the conjecture about the unimodality of independence polynomials of trees is still open \cite{amse87}.

Zhang et al. 
conjectured:

\begin{con}{\em\cite{zss13}}\label{con:CubePsareUnimodal}
The cube polynomials of median graphs are unimodal.
\end{con}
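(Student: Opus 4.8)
The plan is to exploit the machinery just developed rather than to attack the cube polynomial head-on. By Theorem~\ref{thm:CubePandCliqueP}, a median graph $G$ satisfies $C(G,x)=Cl(G^{\#},x+1)$ (the equality case), and by Theorem~\ref{thm:EveryGraphisaCrossingGraph} every graph $H$ occurs as $H=S(H)^{\#}$ with $S(H)$ median. Chaining these gives $C(S(H),x)=Cl(H,x+1)$ for every graph $H$, and conversely every median cube polynomial has the form $Cl(G^{\#},x+1)$. So the first step is to record the \emph{equivalence}: the assertion ``every cube polynomial of a median graph is unimodal'' holds if and only if $Cl(H,x+1)$ is unimodal for every graph $H$. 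This reduces a question about an unwieldy class of graphs to a question about the binomial shift $x\mapsto x+1$ of an arbitrary clique polynomial, and it is this reduction that I would carry out first.

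Next I would assess plausibility before committing to a proof or a refutation. Writing $Cl(H,x)=\sum_i a_i(H)x^i$, the coefficients of the shifted polynomial are $\alpha_j=\sum_{i\ge j}\binom{i}{j}a_i(H)$. Clique polynomials (equivalently, independence polynomials of complements) are notoriously badly behaved: their coefficient sequences can realize essentially arbitrary up--down patterns \cite{amse87}. The binomial shift does have a smoothing effect—it preserves real-rootedness and hence log-concavity—but clique polynomials need not be real-rooted, so there is no a~priori reason the shifted sequence $(\alpha_j)$ must be unimodal. This strongly suggests the conjecture is \emph{false}, and the plan pivots to constructing a counterexample, i.e.\ a graph $H$ whose sequence $(\alpha_j)$ has a strict interior valley $\alpha_{k-1}>\alpha_k<\alpha_{k+1}$.

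For the construction I would use the freedom to choose $H$ and pick one whose clique structure is completely controllable: let $H=K_m\cup\overline{K_N}$, a clique on $m$ vertices together with $N$ isolated vertices. Since a clique is connected, $a_i(H)=\binom{m}{i}$ for $i\ge 2$ while $a_1=m+N$, so $Cl(H,x)=(1+x)^m+Nx$ and therefore
\begin{equation*}
C(S(H),x)=Cl(H,x+1)=(x+2)^m+N(x+1).
\end{equation*}
The term $(x+2)^m$ is log-concave with mode near $j\approx m/3$, so for $m$ large enough one already has $\alpha_2<\alpha_3$; adding $N(x+1)$ inflates \emph{only} $\alpha_0$ and $\alpha_1$ by $N$, leaving $\alpha_j$ for $j\ge 2$ untouched. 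Concretely, with $m=9$ the binomial part gives $\alpha_1=9\cdot 2^{8}=2304$, $\alpha_2=\binom{9}{2}2^{7}=4608$, and $\alpha_3=\binom{9}{3}2^{6}=5376$; any $N>2304$ (say $N=2305$) then produces $\alpha_1>\alpha_2<\alpha_3$, a strict local minimum at index $2$. This exhibits a median graph $S(K_9\cup\overline{K_{2305}})$ whose cube polynomial is not unimodal, disproving the conjecture.

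The hard part will not be the algebra but getting the \emph{direction} of the argument right and ensuring \emph{realizability}. One must confirm that the target sequence comes from a genuine graph rather than a merely formal polynomial, and one must be sure the smoothing induced by $x\mapsto x+1$ cannot be strong enough to restore unimodality after all. The family $K_m\cup\overline{K_N}$ settles realizability for free, being an honest graph, and the explicit closed form $(x+2)^m+N(x+1)$ makes transparent that inflating the linear term outruns the smoothing. The only remaining care is bookkeeping—fixing concrete $m$ and $N$ and checking the three coefficients $\alpha_1,\alpha_2,\alpha_3$ that witness the valley—after which the equivalence of the first step converts this into the desired refutation.
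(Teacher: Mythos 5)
Your proposal is correct and takes essentially the same route as the paper's own refutation: the paper likewise combines Theorem~\ref{thm:=forMedianGraph} with Theorem~\ref{thm:EveryGraphisaCrossingGraph} to obtain $C(G,x)=(x+2)^n+m(x+1)$ from $G^{\#}\cong K_n\cup mK_1$, its median graph $G$ ($Q_n$ with $m$ pendant vertices attached) being exactly your simplex graph $S(K_m\cup\overline{K_N})$. Your numerical witness agrees with the paper's threshold as well, since $\frac{n^2-5n}{2}\cdot 2^{n-2}+1=2305$ at $n=9$.
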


We disprove this conjecture by providing counterexamples, which are obtained from $Q_n$ ($n\geqslant 9$) by attaching sufficiently many pendant vertices.

The paper is organized as follows. In the next section, we introduce some terminology and properties of partial cubes and median graphs. Then, we prove the main theorem (i.e., Theorem \ref{thm:CubePandCliqueP}) in Section 3 and disprove Conjecture \ref{con:CubePsareUnimodal} in Section 4. Finally, we conclude the paper and propose some future problems in Section 5.

\section{Preliminaries}
Let $G$ be a graph with vertex set $V(G)$ and edge set $E(G)$. For $S\subseteq V(G)$, the subgraph induced by $S$ is denoted by $G[S]$. For $v\in V(G)$, $N_G(v)$ is the neighbourhood of $v$ in $G$, i.e., $N_G(v):=\{u\in V(G)|uv\in E(G)\}$. The subgraph $G[N_G(v)]$ is written as $G_v$ simply. For $u, v\in V(G)$, the {\em distance} $d_{G}(u,v)$ (we will drop the subscript $G$ if no confusion can occur) is the length of the shortest path between $u$ and $v$ in $G$.  We call a shortest path from $u$ to $v$ a $u,v$-{\em geodesic}. A subgraph $H$ of $G$ is called {\em isometric} if for any $u,v\in V(H)$, $d_H(u,v)=d_G(u,v)$, and further, if for any $u,v\in V(H)$, all $u,v$-geodesics are contained in $H$, we call $H$ a {\em convex} subgraph of $G$. Obviously, the convex subgraphs are isometric and the isometric ones are induced and connected. A graph $G$ is called  a {\em partial cube} if it is isomorphic to an isometric subgraph of $Q_n$ for some $n$.


The {\em Djokovi\'c-Winkler relation} (see \cite{dj73,w84}) $\Theta_{G}$ is a binary relation on $E(G)$ defined as follows: Let $e=uv$ and $f=xy$ be two edges in $G$, $e\,\Theta_{G}\,f\iff d_{G}(u,x)+d_{G}(v,y)\neq d_{G}(u,y)+d_{G}(v,x)$. If $G$ is bipartite, there is another equivalent definition of the Djokovi\'c-Winkler relation: $e\,\Theta_{G}\,f\iff d_{G}(u,x)=d_{G}(v,y)$ and $d_{G}(u,y)=d_{G}(v,x)$. Winkler \cite{w84} proved that a graph $G$ is a partial cube if and only if $G$ is bipartite and $\Theta_{G}$ is an equivalence relation on $E(G)$. The following property can be obtained easily by definition.

\begin{obs}\label{obs:ThetaRestrictonSubgraph}
Let $G$, $G'$, $G''$ be three graphs. If $G$ is an isometric subgraph of $G'$ and $G'$ is an isometric subgraph of $G''$, then $G$ is an isometric subgraph of $G''$. In prticular, if $G'$ is a partial cube (in this case, $G''=Q_n$ for some integer $n$), then $G$ is also a partial cube. Moreover, for any $e,f\in E(G)$, $e\,\Theta_{G}\,f\iff e\,\Theta_{G'}\,f$.
\end{obs}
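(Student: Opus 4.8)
The plan is to unwind the definition of isometric subgraph and observe that every assertion in the statement is a direct consequence of chaining equalities of distances. Recall that ``$G$ is an isometric subgraph of $G'$'' means precisely that $V(G)\subseteq V(G')$ and $d_G(u,v)=d_{G'}(u,v)$ for all $u,v\in V(G)$; in particular such a subgraph is a genuine subgraph, so $E(G)\subseteq E(G')$ as well. I would prove the three claims in the order in which they are stated, since each one feeds the next.

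For transitivity, I would fix $u,v\in V(G)$. Since $V(G)\subseteq V(G')\subseteq V(G'')$, these vertices also lie in $V(G')$, so the two isometry hypotheses apply and give $d_G(u,v)=d_{G'}(u,v)$ (from $G\hookrightarrow G'$) and $d_{G'}(u,v)=d_{G''}(u,v)$ (from $G'\hookrightarrow G''$, using $u,v\in V(G')$). Combining yields $d_G(u,v)=d_{G''}(u,v)$, so $G$ is isometric in $G''$. The ``in particular'' clause is then immediate: if $G'$ is a partial cube, then by definition $G'$ is an isometric subgraph of some $Q_n$; setting $G''=Q_n$ and applying the transitivity just proved shows that $G$ is an isometric subgraph of $Q_n$, hence a partial cube.

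For the preservation of $\Theta$, I would take arbitrary edges $e=uv$ and $f=xy$ in $E(G)$. Because $E(G)\subseteq E(G')$ these are also edges of $G'$, and all four endpoints $u,v,x,y$ lie in $V(G)$, so the isometry of $G$ in $G'$ forces each of the four distances $d_G(u,x),\,d_G(v,y),\,d_G(u,y),\,d_G(v,x)$ to equal its $G'$-counterpart. Since the Djokovi\'c--Winkler relation is defined purely through the comparison of $d(u,x)+d(v,y)$ with $d(u,y)+d(v,x)$, the strict inequality characterizing $e\,\Theta_G\,f$ holds exactly when the corresponding inequality in $G'$ holds, which gives $e\,\Theta_G\,f\iff e\,\Theta_{G'}\,f$. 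The only point requiring care throughout is the bookkeeping that guarantees all relevant vertices and edges actually reside in the smaller graph, so that the isometry hypothesis is legitimately applicable; beyond this there is no real obstacle, which is exactly why the result is recorded as an observation to be invoked freely in the sequel.
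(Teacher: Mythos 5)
Your proof is correct and is exactly the routine definition-chasing the paper has in mind: the paper offers no written proof, stating only that the observation ``can be obtained easily by definition,'' and your argument (transitivity by chaining distance equalities, the partial-cube claim as an immediate specialization, and $\Theta$-preservation because the Djokovi\'c--Winkler relation depends only on the four endpoint distances, all preserved by isometry) fills in precisely those intended details, including the correct bookkeeping that $E(G)\subseteq E(G')$ so that $e,f$ are edges of $G'$.
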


Let $G$ be a partial cube. We call an equivalence class on $E(G)$ a {\em $\Theta$-class}. The {\em isometric dimension} of $G$, denoted by $\mathrm{idim}(G)$, is the smallest integer $n$ satisfying that $G$ is the isometric subgraph of $Q_n$, which coincides with the number of $\Theta$-classes \cite{dj73}. For $e=uv\in E(G)$, we denote the $\Theta$-class containing $uv$ as $F^G_{uv}$, i.e., $F^G_{uv}:=\{f\in E(G)|f\,\Theta_{G}\,e\}$. If we don't focus on which edges is contained in, we can also denote the $\Theta$-class by $\theta_1,\theta_2,\cdots$. Moreover, we denote $W^G_{uv}:=\{w\in V(G)|d_{G}(u,w)<d_{G}(v,w)\}$ and $U^G_{uv}:=\{w\in V(G)|w\in W^G_{uv}\mbox{ and }w\mbox{ is incident with an edge in }F^G_{uv}\}$. Except for Subsection 3.2, we will drop the subscript of $\Theta_G$ and the superscript of $F^G_{uv}$, $W^G_{uv}$ and $U^G_{uv}$ in the following. 

The following property is obvious.

\begin{obs}\label{obs:FInduceIsomorphism}
Let $G$ be a partial cube and $uv$ is an edge in $E(G)$. $F_{uv}$ induces an isomorphism between the induced subgraphs $G[U_{uv}]$ and $G[U_{vu}]$. Further, if $u_1u_2$, $v_1v_2$ are corresponding edges in $G[U_{uv}]$ and $G[U_{vu}]$ respectively, then $u_1u_2\,\Theta\,v_1v_2$.
\end{obs}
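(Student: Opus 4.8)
The plan is to realize the asserted isomorphism as the perfect matching induced by the $\Theta$-class $F_{uv}$, and then to extract everything (adjacency in both directions, and $\Theta$-equivalence of corresponding edges) from a single application of the bipartite distance characterization of $\Theta_G$ recalled in the Preliminaries. First I would record the standard structural facts about a $\Theta$-class of a partial cube that are implicit above. Since $G$ is bipartite, for the edge $uv$ every vertex $w$ has $d(u,w)\neq d(v,w)$, so $W_{uv}$ and $W_{vu}$ partition $V(G)$; each edge of $F_{uv}$ has one endpoint in each part; and $F_{uv}$ is a matching, because if $ab,ac\in F_{uv}$ shared the vertex $a$, then applying the characterization to $ab\,\Theta\,ac$ would force $d(a,a)=d(b,c)$, i.e. $d(b,c)=0$ and $b=c$. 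Hence each vertex of $U_{uv}$ is incident with a unique $F_{uv}$-edge whose other endpoint lies in $U_{vu}$, so $F_{uv}$ defines a bijection $\ell\colon U_{uv}\to U_{vu}$, where $\ell(u')$ is the $F_{uv}$-partner of $u'$.

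The heart of the argument is then one invocation of the criterion. Take $u_1,u_2\in U_{uv}$ with $u_1u_2\in E(G)$ and set $v_1=\ell(u_1)$, $v_2=\ell(u_2)$, so that $u_1v_1,u_2v_2\in F_{uv}$ and hence $u_1v_1\,\Theta\,u_2v_2$. Applying the bipartite characterization to this pair of edges (with the pairing $u_1\leftrightarrow u_2$, $v_1\leftrightarrow v_2$ forced by the sides $W_{uv}$ and $W_{vu}$) yields the two equalities $d(u_1,u_2)=d(v_1,v_2)$ and $d(u_1,v_2)=d(u_2,v_1)$ simultaneously. The first gives $d(v_1,v_2)=d(u_1,u_2)=1$, so $v_1v_2\in E(G)$; running the identical computation in the reverse direction shows that $v_1v_2\in E(G)$ forces $u_1u_2\in E(G)$. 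Thus $\ell$ both preserves and reflects adjacency, and since it is already a bijection on vertex sets it is an isomorphism $G[U_{uv}]\cong G[U_{vu}]$.

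For the final clause I would feed the two equalities back into the characterization, now for the pair $u_1u_2$ and $v_1v_2$: one needs $d(u_1,v_1)=d(u_2,v_2)$, which holds since both distances equal $1$, together with $d(u_1,v_2)=d(u_2,v_1)$, which is precisely the second equality obtained above; hence $u_1u_2\,\Theta\,v_1v_2$. There is no deep obstacle here — the statement simply says that a $\Theta$-class is a parallelism-preserving perfect matching between the two boundary layers, and all distances in play live among the four vertices of one induced $4$-cycle. The only points demanding care are bookkeeping: justifying that $F_{uv}$ is a matching cut so that $\ell$ is well defined (a consequence of Winkler's characterization, which I would either cite or derive as above), and fixing the correct endpoint pairing when reading off the criterion, since the wrong pairing returns the two equalities in a permuted form rather than the ones needed.
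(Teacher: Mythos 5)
Your proof is correct and complete. The paper itself offers no argument for this statement---it is labelled an Observation and prefaced with ``The following property is obvious''---and what you have written is precisely the standard verification the authors leave implicit: the bipartite distance characterization of $\Theta$ recalled in the Preliminaries shows that $F_{uv}$ is a matching with one endpoint of each edge in $W_{uv}$ and one in $W_{vu}$, that the induced bijection $U_{uv}\to U_{vu}$ preserves and reflects adjacency, and that corresponding edges $u_1u_2$, $v_1v_2$ satisfy both required distance equalities (note the pairing issue you flag is harmless, since the conjunction $d(u,x)=d(v,y)$ and $d(u,y)=d(v,x)$ is invariant under relabelling the endpoints of either edge).
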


About $W_{uv}$, Djokovi\'c obtained the following property:

\begin{pro}{\em\cite{dj73}}\label{pro:WisConvex}
Let $G$ be a partial cube. Then $G[W_{uv}]$ and $G[W_{vu}]$ are convex in $G$ for any $uv\in E(G)$.
\end{pro}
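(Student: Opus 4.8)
The plan is to show that no shortest path between two vertices on the same side of the edge $uv$ ever crosses to the other side; convexity of $G[W_{uv}]$ (and, symmetrically, of $G[W_{vu}]$) then follows at once. Throughout I would use that $G$ is bipartite with $\Theta$ an equivalence relation (Winkler's characterization), and write $F_{uv}$ for the $\Theta$-class of $uv$.

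First I would record the partition $V(G)=W_{uv}\sqcup W_{vu}$: since $G$ is bipartite and $uv\in E(G)$, for every vertex $w$ the two distances $d(u,w)$ and $d(v,w)$ have opposite parities and differ by at most $1$, so exactly one of $d(u,w)<d(v,w)$ and $d(v,w)<d(u,w)$ holds. Thus ``being in $W_{uv}$'' is the same as lying on the $u$-side of the class $F_{uv}$.

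The heart of the argument is the claim that \emph{a geodesic meets each $\Theta$-class in at most one edge}. To see this, suppose a shortest path $x=x_0x_1\cdots x_k=y$ contained two edges $x_ix_{i+1}$ and $x_jx_{j+1}$ ($i<j$) of one $\Theta$-class. Using the bipartite form of the relation, $x_ix_{i+1}\,\Theta\,x_jx_{j+1}$ would require $d(x_i,x_{j+1})=d(x_{i+1},x_j)$; but along a geodesic these distances are $j+1-i$ and $j-1-i$, which are unequal. Hence the edges of any $x,y$-geodesic lie in pairwise distinct $\Theta$-classes, and since crossing a class switches the side of the current vertex, the classes crossed are \emph{exactly} those separating $x$ from $y$.

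I would then finish as follows. Take $x,y\in W_{uv}$ and any $x,y$-geodesic $P$. Since $x$ and $y$ lie on the same ($u$-)side of $F_{uv}$, the class $F_{uv}$ does not separate them, so by the claim $P$ uses no edge of $F_{uv}$; consequently $P$ never switches sides and stays entirely within $W_{uv}$. Therefore every vertex and edge of $P$ belongs to $G[W_{uv}]$, which proves convexity. The only delicate point — and hence the step I would be most careful about — is the at-most-one-crossing claim, but as shown it reduces to a one-line distance computation from the bipartite definition of $\Theta$; alternatively one may read it off from an isometric embedding $G\hookrightarrow Q_n$, under which $W_{uv}$ is the intersection of $G$ with a coordinate half-cube and geodesics of $G$ are geodesics of $Q_n$ that cannot flip a coordinate on which their endpoints already agree.
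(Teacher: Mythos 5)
Your proof is correct, but there is nothing in the paper to compare it against: the paper states this proposition as a quoted result of Djokovi\'c \cite{dj73} and gives no proof of it. What you write is essentially the standard argument (the one in Imrich--Klav\v zar's book): a geodesic meets each $\Theta$-class in at most one edge, so a geodesic joining two vertices of $W_{uv}$ cannot use an edge of $F_{uv}$, and hence cannot leave $W_{uv}$. Your key claim and its distance computation are fine, and the alternative half-cube argument you sketch at the end is also valid. One point you leave implicit and should state: to go from ``$P$ uses no edge of $F_{uv}$'' to ``$P$ never switches sides,'' you need the converse of the assertion you make, namely that \emph{every} edge $ab$ with $a\in W_{uv}$ and $b\in W_{vu}$ belongs to $F_{uv}$ (i.e., $F_{uv}$ contains the whole edge cut between the two sides). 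This is a one-line check from the Djokovi\'c--Winkler definition: $d(a,u)<d(a,v)$ and $d(b,v)<d(b,u)$ give $d(a,u)+d(b,v)\neq d(a,v)+d(b,u)$, hence $ab\,\Theta\,uv$. With that line added, the proof is complete.
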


Let $H$ be an induced subgraph of $G$. Denote $\partial\, H=\{uv\in E(G)|u\in V(H),v\not\in V(H)\}$. About the convex subgraphs of bipartite graphs, Imrich and Klav\v zar obtained the following proposition:

\begin{pro}{\em\cite{ik98}}\label{pro:ConvexityLemma}
An induced connected subgraph $H$ of a bipartite graph $G$ is convex if and only if no edge of $\partial\, H$ is in relation $\Theta$ to an edge in $H$.
\end{pro}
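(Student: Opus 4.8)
The plan is to reduce the whole statement to a single local description of $\Theta$ in a bipartite graph, from which convexity can be read off directly. For an edge $e=ab$, recall the partition $V(G)=W_{ab}\cup W_{ba}$ (a genuine partition, since in a bipartite graph no vertex is equidistant from two adjacent vertices). Say that an edge $g=cd$ \emph{crosses the $e$-cut} if $c$ and $d$ lie on opposite sides of this partition. The crucial preliminary claim I would establish is (L1): in a bipartite graph, $g\,\Theta\,e$ if and only if $g$ crosses the $e$-cut. I would prove this straight from the bipartite form of the relation, $e\,\Theta\,f\iff d(u,x)=d(v,y)$ and $d(u,y)=d(v,x)$. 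For ``crosses $\Rightarrow\Theta$'', write $p=d(c,a)$ and $q=d(d,b)$; since $c\sim d$, the pairs $d(c,a),d(d,a)$ and $d(c,b),d(d,b)$ each differ by exactly $1$, and the two sidedness inequalities force $p=q$, whence $d(a,c)=d(b,d)=p$ and $d(a,d)=d(b,c)=p+1$, i.e.\ $g\,\Theta\,e$. The converse is a short contradiction: if $c,d$ sat on the same side of the $e$-cut, the two $\Theta$-equalities would simultaneously give $d(a,c)=d(a,d)+1$ and $d(a,d)=d(a,c)+1$.

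Alongside (L1) I would record (L2): no two distinct edges of a geodesic are in relation $\Theta$. This is a one-line distance count, reproving the only part of the (commented-out) geodesic proposition that I need: for a geodesic $w_0w_1\cdots w_k$ and indices $i<j$, every subpath is again a geodesic, so $d(w_i,w_j)=d(w_{i+1},w_{j+1})=j-i$ but $d(w_i,w_{j+1})=j+1-i$ while $d(w_{i+1},w_j)=j-1-i$, so the second $\Theta$-equality fails. With (L1)--(L2) the forward direction (if an edge of $\partial H$ is $\Theta$ to an edge of $H$, then $H$ is not convex) is almost immediate. Given $e=ab\in\partial H$ with $a\in V(H)$, $b\notin V(H)$, and $f=xy\in E(H)$ with $e\,\Theta\,f$, (L1) places $x,y$ on opposite sides of the $e$-cut, so one of them, say $x$, lies in $W_{ba}$, i.e.\ $d(x,a)=d(x,b)+1$. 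As $b\sim a$, the edge $ba$ extends an $x,b$-geodesic to an $x,a$-geodesic whose penultimate vertex is $b$; since $d(x,a)\geqslant 2$ (otherwise $x=b$, impossible as $x\in V(H)$, $b\notin V(H)$), the vertex $b\notin V(H)$ is internal to this geodesic between two vertices of $H$, so $H$ is not convex.

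For the converse I argue contrapositively: assuming $H$ is not convex, I choose $u,v\in V(H)$ admitting a $u,v$-geodesic $P$ leaving $H$ with $d(u,v)$ \emph{minimum}. Minimality forces every internal vertex of $P$ to lie outside $H$ (else $P$ splits at such a vertex into two shorter geodesics between $H$-vertices, one of which still leaves $H$), so the first edge $e_1=w_0w_1$ of $P$ lies in $\partial H$. Next, $u$ and $v$ are separated by the $e_1$-cut: $P$ already crosses this cut at $e_1$, and a second crossing edge $g$ would satisfy $g\,\Theta\,e_1$ by (L1), contradicting (L2) since $g$ and $e_1$ both lie on the geodesic $P$. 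Finally, connectedness of $H$ supplies a $u,v$-path inside $H$; its endpoints sit on opposite sides of the $e_1$-cut, so some edge $f\in E(H)$ of that path crosses the cut, and (L1) yields $f\,\Theta\,e_1$ with $e_1\in\partial H$, the forbidden configuration.

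The main obstacle, and essentially the only place where care is needed, is (L1). Because $G$ is merely bipartite and not assumed to be a partial cube, $\Theta$ need not be an equivalence relation, so I cannot lean on transitivity or on Proposition~\ref{pro:WisConvex} (stated only for partial cubes); instead I must verify the exact dictionary ``$\Theta$ $=$ crossing a cut'' purely from distances. Once this local-to-global translation is in place, both implications are just bookkeeping about which side of a cut the relevant vertices occupy, with (L2) playing the single structural role of preventing a geodesic from re-crossing a cut it has already crossed.
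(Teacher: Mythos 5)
The paper does not actually prove this proposition: it is quoted verbatim from \cite{ik98}, so there is no internal proof to compare against. Your argument is correct, and it is essentially the standard proof of the Convexity Lemma from that source: your (L1) (in a connected bipartite graph, $g\,\Theta\,e$ if and only if $g$ crosses the cut $\{W_{ab},W_{ba}\}$) and (L2) (no two distinct edges of a geodesic are $\Theta$-related) are exactly the two facts the original argument rests on, and the rest of your reasoning checks out: the distance bookkeeping in (L1) does force $p=q$; the minimal non-convex pair $(u,v)$ does force all internal vertices of the offending geodesic $P$ outside $H$, so its first edge $e_1$ lies in $\partial H$; (L1) plus (L2) correctly prevent $P$ from re-crossing the $e_1$-cut, placing $v$ on the far side; and connectedness of $H$ then supplies the edge $f\in E(H)$ crossing the cut with $f\,\Theta\,e_1$. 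Two points you leave implicit and should state: $G$ must be connected (otherwise $W_{ab}$ and $W_{ba}$ do not partition $V(G)$ and the distances in (L1) are undefined; this is the setting intended by the paper), and it is the hypothesis that $H$ is \emph{induced} which lets you equate ``geodesic not contained in $H$'' with ``geodesic containing a vertex outside $V(H)$'' when you set up the contrapositive, as well as guaranteeing in the forward direction that $x\in V(H)$, $b\notin V(H)$ forces $x\neq b$.
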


In particular, we have

\begin{obs}\label{obs:ConvexSubgraphofHypercube}
Every convex subgraph of the hypercube $Q_n$ is a hypercube $Q_r$ for some integer $r\leqslant n$.
\end{obs}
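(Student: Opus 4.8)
The plan is to work in the subset model of the hypercube, identifying $V(Q_n)$ with the power set $2^{[n]}$, where $[n]=\{1,\dots,n\}$, two subsets being adjacent if and only if their symmetric difference has size one, so that $d_{Q_n}(A,B)=|A\triangle B|$. The one structural fact I will rely on is that the union of all $A,B$-geodesics in $Q_n$ is exactly the \emph{interval} $I(A,B)=\{C\mid A\cap B\subseteq C\subseteq A\cup B\}$, and that $Q_n[I(A,B)]$ is itself a subcube isomorphic to $Q_{|A\triangle B|}$ (the coordinates in $A\triangle B$ are free, the rest are frozen). Consequently, saying that an induced connected subgraph $H$ is convex is equivalent to saying $I(A,B)\subseteq V(H)$ for all $A,B\in V(H)$; and since $H$ is induced, each such interval sits inside $H$ as a full subcube.

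First I would normalise the situation. Because the translations $X\mapsto X\triangle A$ are isometric automorphisms of $Q_n$, they carry convex subgraphs to convex subgraphs; applying one such map I may assume $\emptyset\in V(H)$. Set $S:=\bigcup_{X\in V(H)}X\subseteq[n]$ and let $r:=|S|$. By the very definition of $S$ every vertex of $H$ is a subset of $S$, so $V(H)\subseteq 2^{S}$; the whole content of the proof is then the reverse inclusion $2^{S}\subseteq V(H)$, which exhibits $H$ as the subcube on the coordinate set $S$.

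To obtain the reverse inclusion I would show $T\in V(H)$ for every $T\subseteq S$ by induction on $|T|$. The cases $|T|\le 1$ form the base: $\emptyset\in V(H)$ by normalisation, and for a singleton $\{i\}$ with $i\in S$ I pick a witness $W\in V(H)$ containing $i$ and invoke convexity on the pair $\emptyset,W$, so that $\{i\}\in I(\emptyset,W)=\{C\mid C\subseteq W\}\subseteq V(H)$. For the inductive step write $T=T'\cup\{i\}$ with $i\notin T'$ and $|T'|=|T|-1$; by induction $T'\in V(H)$ and by the base case $\{i\}\in V(H)$, and since $T'\cap\{i\}=\emptyset$ and $T'\cup\{i\}=T$ we get $I(T',\{i\})=\{C\mid C\subseteq T\}$, which contains $T$. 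Convexity applied to $T'$ and $\{i\}$ then forces $T\in V(H)$. This completes the induction, giving $V(H)=2^{S}$, whence $H\cong Q_r$ with $r=|S|\le n$.

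The argument is essentially mechanical once the interval description is in hand, so the only point demanding genuine care is the opening structural claim: that in $Q_n$ the set of vertices lying on some shortest $A,B$-path is precisely $I(A,B)$, and that this interval, as an induced subgraph, is a subcube of dimension $|A\triangle B|$. I expect this to be the main (if modest) obstacle; everything afterwards — the reduction to $\emptyset$, the choice of $S$, and the two-step induction — follows routinely from convexity.
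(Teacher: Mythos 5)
Your proof is correct, but it is worth knowing that the paper itself never proves this statement: it appears as an unproved Observation, introduced by ``In particular, we have'' as an immediate consequence of Proposition \ref{pro:ConvexityLemma} (the Imrich--Klav\v zar convexity lemma). The intended short derivation runs through the $\Theta$-machinery: in $Q_n$ the $\Theta$-classes are exactly the $n$ parallel classes of edges (two edges are $\Theta$-related iff they flip the same coordinate); if $H$ is convex and $S$ is the set of coordinates flipped by edges of $H$, then connectivity of $H$ places $V(H)$ inside the subcube spanned by the coordinates $S$ through any fixed vertex of $H$, and if that containment were proper, some edge of this subcube joining $V(H)$ to its complement would lie in $\partial H$ while flipping a coordinate of $S$, hence be $\Theta$-related to an edge of $H$, contradicting Proposition \ref{pro:ConvexityLemma}. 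Your route is genuinely different: a self-contained first-principles argument in the subset model, using only the interval description $I(A,B)=\{C \mid A\cap B\subseteq C\subseteq A\cup B\}$, a translation $X\mapsto X\triangle A$ to normalise $\emptyset\in V(H)$, and a two-step induction on $|T|$. The one fact you flag as the ``main obstacle'' is standard and easily checked: if $A\cap B\subseteq C\subseteq A\cup B$ then $A\triangle C$ and $C\triangle B$ are disjoint with union $A\triangle B$, so $d(A,C)+d(C,B)=d(A,B)$ and $C$ lies on a geodesic, and conversely every vertex on a geodesic satisfies the sandwich condition; so there is no gap, and your inductive step is sound since $T=T'\cup\{i\}\in I(T',\{i\})$. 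What your approach buys is complete independence from the $\Theta$-class apparatus; what the paper's implicit route buys is brevity and uniformity with the Djokovi\'c--Winkler toolkit it uses everywhere else.
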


$G$ is called a {\em median graph} if for every three different vertices $u,v,w\in V(G)$, there exists exactly one vertex $x\in V(G)$ (maybe $x\in\{u,v,w\}$), called the {\em median} of $u,v,w$, satisfying that $d(u,x)+d(x,v)=d(u,v)$, $d(u,x)+d(x,w)=d(u,w)$ and $d(v,x)+d(x,w)=d(v,w)$, that is, there exist the geodesics between each pair of $u,v,w$ where $x$ lies on all of them. If $H$ is a convex subgraph of a median graph $G$, then $H$ is also a median graph. There are many equivalent characterizations of median graphs (see \cite{km99}). The most famous one is the following proposition:

\begin{pro}{\em\cite{ik00}}\label{pro:UisConvex}
A graph $G$ is a median graph if and only if $G$ is a partial cube and $G[U_{uv}]$ is convex in $G$ for every $uv\in E(G)$.
\end{pro}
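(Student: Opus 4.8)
The plan is to pass to an isometric embedding $G\hookrightarrow Q_n$ supplied by Winkler's characterization, identify each vertex with its $0$--$1$ string, and let coordinate $s$ correspond to the $\Theta$-class $\theta_s$ (so crossing $\theta_s$ means flipping coordinate $s$). In $Q_n$ the three intervals determined by any triple $u,v,w$ meet in the single vertex $m=\mathrm{maj}(u,v,w)$, the coordinatewise majority. Since $G$ is isometric in $Q_n$, every $G$-geodesic is a $Q_n$-geodesic, so any median of $u,v,w$ in $G$ must equal $m$; hence \emph{uniqueness of medians is automatic}, and the whole proposition reduces to: $G$ is a median graph if and only if $G$ is a partial cube in which $m=\mathrm{maj}(u,v,w)\in V(G)$ for every triple. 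This reformulation is what I would use for both directions.

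\emph{Forward direction.} That a median graph is a partial cube is classical (median graphs are retracts of hypercubes), so I assume this and prove that each $G[U_{uv}]$ is convex. Fix an edge $uv$, let $s$ be the coordinate of $F_{uv}$, and take $a,b\in U_{uv}$ with partners $a^\ast,b^\ast\in U_{vu}$ across $F_{uv}$ (these exist by Observation~\ref{obs:FInduceIsomorphism}). Let $c$ lie on an $a$--$b$ geodesic; by Proposition~\ref{pro:WisConvex} the half $G[W_{uv}]$ is convex, so $c\in W_{uv}$, i.e.\ $c_s$ agrees with $u$. The key point is that $\mathrm{maj}(c,a^\ast,b^\ast)$ equals the string $c'$ obtained from $c$ by flipping coordinate $s$: in coordinate $s$ the three bits are $c_s$ and two copies of its flip, so the majority is the flipped bit; in every coordinate $t\neq s$ we have $a^\ast_t=a_t$, $b^\ast_t=b_t$, and $c\in I(a,b)$ forces $c_t$ to be the majority of $c_t,a_t,b_t$. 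Because $G$ is median, $\mathrm{maj}(c,a^\ast,b^\ast)=c'$ is a vertex of $G$, so $c$ has a neighbour across $F_{uv}$ and therefore $c\in U_{uv}$. Thus the whole geodesic lies in $U_{uv}$ and $G[U_{uv}]$ is convex.

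\emph{Backward direction.} Here $G$ is a partial cube with every $G[U_{uv}]$ convex, and by the reformulation it suffices to show $m=\mathrm{maj}(u,v,w)\in V(G)$ for every triple. A clean reduction is to pick $p\in V(G)$ minimising $f(p)=d(p,u)+d(p,v)+d(p,w)$. If $p$ differed from $m$ in some coordinate $i$, then at least two of $u,v,w$ would agree with $m$, hence disagree with $p$, in coordinate $i$, so flipping $p$ there would strictly decrease $f$ whenever the flip stayed in $V(G)$; by minimality the flip must leave $V(G)$, i.e.\ $p$ has \emph{no} neighbour across $\theta_i$ for any coordinate $i$ with $p\neq m$. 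I would then derive a contradiction (forcing $p=m$, whence $m\in V(G)$) from the convexity of the $U$-sets.

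\emph{Main obstacle.} The entire weight of the proposition sits in this last step: converting ``$p$ has no neighbour toward $m$'' into a contradiction. The natural route is the convex-expansion machinery: contract one $\Theta$-class to obtain a partial cube $G'$ of smaller isometric dimension, check (using Proposition~\ref{pro:ConvexityLemma} and Observation~\ref{obs:FInduceIsomorphism}) that the $U$-sets of $G'$ remain convex so that $G'$ is median by induction, and then argue that the hypothesis ``$G[U_{uv}]$ convex'' makes $G$ a \emph{convex} expansion of $G'$, a construction known to preserve the median property. Pinning down that the contraction preserves both ``partial cube'' and ``convex $U$-sets,'' and that a convex expansion of a median graph is again median, is the technically demanding heart of the argument; the two reformulations above are precisely what make the forward direction short and isolate this single hard point.
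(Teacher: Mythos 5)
The paper never proves this proposition: it is quoted verbatim from the Imrich--Klav\v zar book \cite{ik00} as background, so there is no internal proof to compare against and your attempt has to be judged on its own. Your forward direction is essentially correct and quite clean. The reformulation is valid: in $Q_n$ the only vertex metrically between all three pairs is the coordinatewise majority, and isometry of the embedding transfers this to $G$, so existence of medians is equivalent to majority-closure and uniqueness is free. Granting the classical fact that median graphs are partial cubes (reasonable to cite, as the paper itself freely cites such results), your computation that $\mathrm{maj}(c,a^{\ast},b^{\ast})$ is exactly $c$ with coordinate $s$ flipped is correct: convexity of $G[W_{uv}]$ (Proposition~\ref{pro:WisConvex}) pins down coordinate $s$ of $c$, membership of $c$ in the hypercube interval $I(a,b)$ handles every other coordinate, and since isometric subgraphs are induced, the majority vertex yields an edge of $G$ that is $\Theta$-related to $uv$ by Observation~\ref{obs:ThetaRestrictonSubgraph}, so $c\in U_{uv}$.

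The backward direction, however, is a plan rather than a proof, and the gap sits exactly where you flagged it. Your minimizer argument only establishes that a vertex $p$ minimizing $d(p,u)+d(p,v)+d(p,w)$ has no $G$-neighbour across any coordinate where $p$ differs from $m=\mathrm{maj}(u,v,w)$; the promised contradiction \emph{from the convexity of the $U$-sets} is never derived, and it does not follow by any short local argument --- this implication is the entire substantive content of the proposition, since the forward half is comparatively soft. Your fallback route (contract a $\Theta$-class to get $G'$, show the $U$-sets of $G'$ are still convex, induct, then realize $G$ as a convex expansion of $G'$) is indeed the standard strategy, but all three of its ingredients are left unverified: (a) that contraction preserves the partial-cube property together with convexity of all $U$-sets requires relating geodesics of $G'$ back to geodesics of $G$, which is a genuine lemma, not a remark; (b) that convexity of $G[U_{uv}]$ makes the relevant expansion convex needs the image of $U_{uv}$ to be convex in $G'$, again unproved; and (c) the final appeal, that convex expansions preserve the median property, is Mulder's expansion theorem (the paper's Proposition~\ref{pro:ConvexPeripheralExpansions} is its peripheral variant), a result of the same depth as the proposition you are proving --- invoking it wholesale makes the argument circular in spirit. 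As it stands you have proved one implication and reduced the other to a known hard theorem without supplying its proof.
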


Let $G$ be a graph and $G_1,G_2$ two isometric subgraphs with $G=G_1\cup G_2$ and $G_0=G_1\cap G_2$ not empty, where there are no edges between $G_1- G_2$ and $G_2- G_1$. Let $G^*_i$ be isomorphic copy of $G_i$ for $i=1,2$. For every $u\in V(G_0)$, let $u_i$ be the corresponding vertex in $G^*_i$ ($i=1,2$). The {\em expansion} $G^*$ of $G$ with respect to $\{G_1,G_2\}$ is the graph obtained from the disjoint union $G^*_1$ and $G^*_2$ by adding an edge between the corresponding vertices $u_1$ and $u_2$ for each vertex $u\in G_0$. It is known that partial cubes are characterized as graphs that can be obtained from $K_1$ by a sequence of expansions \cite{c88}. If $G$ is a partial cube, all edges $u_1u_2$ for $u\in G_0$ compose a new $\Theta$-class by the definition of the Djokovi\'c-Winkler relation. So $\mathrm{idim}(G^*)=\mathrm{idim}(G)+1$. The expansion is {\em convex} if $G_0$ is convex, and {\em peripheral} if $G_0=G_1$ or $G_0=G_2$. The second equivalent characterization of median graphs we will use is:

\begin{pro}{\em\cite{mu90}}\label{pro:ConvexPeripheralExpansions}
Let $G$ be a connected graph. $G$ is a median graph if and only if it can be obtained from $K_1$ by a sequence of peripheral convex expansions.
\end{pro}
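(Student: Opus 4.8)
The plan is to prove both implications by induction on $|V(G)|$, with the trivial base case $G=K_1$ (the empty sequence of expansions). Throughout, for an edge $uv$ recall that $F_{uv}$ is a $\Theta$-class splitting $V(G)$ into the two convex halves $W_{uv}$ and $W_{vu}$ (Proposition~\ref{pro:WisConvex}), that the edges of $G$ joining these two halves are \emph{exactly} the edges of $F_{uv}$, and that $F_{uv}$ induces an isomorphism $G[U_{uv}]\cong G[U_{vu}]$ (Observation~\ref{obs:FInduceIsomorphism}). I will call a $\Theta$-class $F_{uv}$ \emph{peripheral} if $W_{uv}=U_{uv}$, i.e.\ if every vertex of one half is incident with an edge of $F_{uv}$. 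The whole forward implication reduces to producing such a class.

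For the forward implication, let $G$ be a median graph with $G\neq K_1$ and assume it has a peripheral $\Theta$-class $F_{uv}$ with $W_{uv}=U_{uv}$. Set $G':=G[W_{vu}]$. Since $W_{vu}$ is convex in $G$, $G'$ is a median graph, and $|V(G')|=|W_{vu}|<|V(G)|$ because $W_{uv}\neq\varnothing$. I claim $G$ is a peripheral convex expansion of $G'$: take $G_1=G'$ and $G_0=G_2=G[U_{vu}]$, which is convex in $G'$ by Proposition~\ref{pro:UisConvex} together with the convexity of $W_{vu}$. The expansion of $G'$ with respect to $\{G_1,G_2\}$ duplicates the convex subgraph $G[U_{vu}]$; since $G[U_{vu}]\cong G[U_{uv}]=G[W_{uv}]$ by Observation~\ref{obs:FInduceIsomorphism}, the duplicated copy matched back to $U_{vu}$ reconstructs exactly $G[W_{uv}]$ joined to $W_{vu}$ by $F_{uv}$, i.e.\ $G$ itself. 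As $G_0=G_2$ this expansion is peripheral, and as $G_0$ is convex it is convex. By the induction hypothesis $G'$ arises from $K_1$ by peripheral convex expansions, and appending the one just described yields $G$.

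The main obstacle is therefore the existence of a peripheral $\Theta$-class in every median graph $G\neq K_1$. The plan is to take a half-space $W=W_{uv}$ of \emph{minimum cardinality} among all half-spaces and to prove $W=U_{uv}$. Suppose not, and fix $z\in W\setminus U_{uv}$ together with a vertex $g\in U_{uv}$ nearest to $z$; let $z_1$ be the neighbour of $z$ on a $z$--$g$ geodesic, which lies in $W$ by convexity of $W$. Put $F'=F_{zz_1}$ and orient it so that $z\in W_{zz_1}$ and $z_1\in W_{z_1z}$. First, $F'\neq F_{uv}$, since otherwise $z$ would be incident with an $F_{uv}$-edge, contradicting $z\notin U_{uv}$; and $F'$ does not cross $F_{uv}$, because a crossing would produce an $F'$-edge joining $W_{uv}$ and $W_{vu}$, yet every such edge lies in $F_{uv}$, again forcing $F'=F_{uv}$. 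Hence $W_{vu}$ lies entirely in one half of $F'$. Finally, writing $g^{*}\in U_{vu}$ for the partner of $g$ under $F_{uv}$, a short distance computation gives $d(z,g^{*})=d(z,g)+1$ and $d(z_1,g^{*})\leqslant d(z_1,g)+1=d(z,g)<d(z,g^{*})$, so $g^{*}\in W_{z_1z}$; therefore $W_{vu}\subseteq W_{z_1z}$ and consequently $W_{zz_1}\subseteq W_{uv}=W$. Since $z_1\in W\setminus W_{zz_1}$, the half-space $W_{zz_1}$ is strictly smaller than $W$, contradicting minimality. This contradiction shows $W=U_{uv}$, i.e.\ $F_{uv}$ is peripheral.

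For the reverse implication I would show that a peripheral convex expansion preserves the median property, after which induction from $K_1$ closes the argument. Let $G^{*}$ be obtained from a median graph $G=G_1\cup G_2$ by a peripheral convex expansion, say with $G_0=G_2$ convex. By the expansion characterization of partial cubes \cite{c88}, $G^{*}$ is a partial cube, and the matching edges form a new $\Theta$-class $F^{*}$. To apply Proposition~\ref{pro:UisConvex} it remains to check that $G^{*}[U_{xy}]$ is convex for every edge $xy$: for $F^{*}$ the two $U$-sets are the two copies of the convex subgraph $G_0$ (on the duplicated side this copy is the \emph{whole} half, which is precisely peripherality), hence convex; for a $\Theta$-class inherited from $G$, the convexity of $G_0$ guarantees it is duplicated coherently, and a routine case check confirms that its $U$-sets remain convex in $G^{*}$. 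Thus $G^{*}$ is a median graph. I expect this reverse direction to be the more routine one, the genuine difficulty of the whole proof being concentrated in the minimum-cardinality argument for the existence of a peripheral $\Theta$-class.
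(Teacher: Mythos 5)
The paper itself offers no proof of this proposition---it is quoted from Mulder \cite{mu90}---so your attempt can only be judged on its own merits. Your overall architecture is the standard one for this theorem (find a $\Theta$-class $F_{uv}$ with $W_{uv}=U_{uv}$ by minimizing a half-space, contract it to induct; conversely verify the condition of Proposition \ref{pro:UisConvex} after an expansion), and most individual steps are fine. But the pivotal step of the forward direction has a genuine gap: your justification that $F'=F_{zz_1}$ does not cross $F_{uv}$ misreads the definition of crossing. As defined in Section 2, $F'$ crosses $F_{uv}$ when $F'$ \emph{occurs in both components} of $G-F_{uv}$, i.e.\ when $F'$ contains an edge with both endpoints in $W_{uv}$ and another edge with both endpoints in $W_{vu}$. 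A crossing never produces an $F'$-edge \emph{joining} $W_{uv}$ to $W_{vu}$, so no conflict with ``the cut edges are exactly $F_{uv}$'' ever arises, and your one-line argument proves nothing. Since this non-crossing claim is exactly what yields ``$W_{vu}$ lies entirely in one half of $F'$,'' the heart of your existence argument for a peripheral $\Theta$-class is unsupported.

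The claim is true and the hole is repairable, in fact by an argument that makes both the non-crossing claim and your $g^{*}$ computation unnecessary. Since $G$ is median, $G[U_{uv}]$ is convex (Proposition \ref{pro:UisConvex}), and the nearest vertex $g$ is a gate: for any $p\in U_{uv}$, the median $m$ of $z,g,p$ lies in $U_{uv}$ (convexity, as $m\in I(g,p)$) and on a $z$--$g$ geodesic, so the minimality of $d(z,g)$ forces $m=g$, whence $d(z,p)=d(z,g)+d(g,p)$. Now take any $w\in W_{vu}$; a $z$--$w$ geodesic crosses $F_{uv}$ at an edge $pp^{*}$ with $p\in U_{uv}$, so $d(z,w)=d(z,g)+d(g,p)+1+d(p^{*},w)$, while $d(z_1,w)\leqslant d(z_1,g)+d(g,p)+1+d(p^{*},w)=d(z,w)-1$. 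Hence $W_{vu}\subseteq W_{z_1z}$ directly, $W_{zz_1}\subseteq W_{uv}$ with $z_1\in W_{uv}\setminus W_{zz_1}$, and your minimum-cardinality contradiction goes through. Separately, in the reverse implication the phrase ``a routine case check confirms that its $U$-sets remain convex in $G^{*}$'' is where the real content of Mulder's theorem lives: one must establish how $d_{G^{*}}$ relates to $d_G$ across the expansion (distances increase by one exactly between the two expanded sides) before any such check can be run, so as written that direction is a sketch rather than a proof.
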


Let $G$ be a partial cube and not $K_1$, $F_{ab},F_{uv}$ two $\Theta$-classes of $G$. We say $F_{ab},F_{uv}$ {\em cross} if $W_{ab}\cap W_{uv}\neq\emptyset$, $W_{ba}\cap W_{uv}\neq\emptyset$, $W_{ab}\cap W_{vu}\neq\emptyset$ and $W_{ba}\cap W_{vu}\neq\emptyset$. For a subgraph $H$ of $G$, we say $F_{uv}$ {\em occurs} in $H$ if there is an edge of $F_{uv}$ in $E(H)$. Another equivalent definition of crossing is: $F_{ab}$ and $F_{uv}$ cross if $F_{ab}$ occurs in both $G[W_{uv}]$ and $G[W_{vu}]$. The {\em crossing graph} of $G$ (see \cite{km02}), denoted by $G^{\#}$, is the graph whose vertices are corresponding to the $\Theta$-classes of $G$, and $\theta_1=F_{ab},\theta_2=F_{uv}$ are adjacent in $G^{\#}$ if and only if they cross in $G$. The following proposition is an equivalent expression of crossing relation.

\begin{pro}{\em\cite{km02}}\label{pro:CrossinIsometricCycle}
Let $G$ be a partial cube, $\theta_1,\theta_2$ two $\Theta$-classes. Then $\theta_1$ and $\theta_2$ cross if and only if they occur on an isometric cycle $C$, i.e., $E(C)\cap\theta_1\neq\emptyset$, $E(C)\cap\theta_2\neq\emptyset$.
\end{pro}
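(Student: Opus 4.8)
The plan is to prove the two implications separately, with the forward (``only if'') direction being the substantial one. Throughout I will use that each $\Theta$-class $\theta=F_{ab}$ is a cut whose two sides $G[W_{ab}]$ and $G[W_{ba}]$ are convex (Proposition \ref{pro:WisConvex}), so that a geodesic meets each half-space in a single subpath and therefore crosses $\theta$ at most once; equivalently, a geodesic uses at most one edge of any fixed $\Theta$-class. I will also repeatedly use that an intersection of convex subgraphs is convex, so for $\theta_1=F_{ab}$ and $\theta_2=F_{uv}$ each of the four ``quadrants'' $W_{ab}\cap W_{uv}$, $W_{ba}\cap W_{uv}$, $W_{ab}\cap W_{vu}$, $W_{ba}\cap W_{vu}$ is convex.

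For the ``if'' direction, suppose $\theta_1$ and $\theta_2$ both occur on an isometric cycle $C$ of length $2k$. Since $C$ is isometric, Observation \ref{obs:ThetaRestrictonSubgraph} gives that $\Theta_G$ restricted to $E(C)$ coincides with $\Theta_C$, whose classes are exactly the $k$ pairs of antipodal edges of $C$. Hence $\theta_1$ occupies one antipodal edge-pair of $C$ and $\theta_2$ another; as $\theta_1\neq\theta_2$ these four edges are distinct, and being two antipodal pairs they interleave around $C$. Reading off the vertices lying between consecutive crossing edges, each of the four resulting arcs lies in a single quadrant (its interior crosses neither class), and the interleaving pattern forces a vertex of $C$ into every quadrant. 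Thus all four sets $W_{ab}\cap W_{uv},\dots,W_{ba}\cap W_{vu}$ are nonempty, which is exactly the definition of $\theta_1$ and $\theta_2$ crossing.

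For the ``only if'' direction I first produce \emph{some} cycle carrying both classes and then minimize. Since $\theta_1$ crosses $\theta_2$, the class $\theta_1$ occurs in both $G[W_{uv}]$ and $G[W_{vu}]$; pick edges $x_1y_1\in\theta_1$ with $x_1,y_1\in W_{uv}$ and $x_2y_2\in\theta_1$ with $x_2,y_2\in W_{vu}$, labelled so that $x_1,x_2\in W_{ab}$ and $y_1,y_2\in W_{ba}$. A geodesic $P$ from $x_1$ to $x_2$ stays in the convex set $W_{ab}$, hence uses no $\theta_1$-edge and crosses $\theta_2$ exactly once; a geodesic $P'$ from $y_1$ to $y_2$ stays in $W_{ba}$ and likewise crosses $\theta_2$ once. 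Because $W_{ab}$ and $W_{ba}$ are disjoint, $P$ and $P'$ are vertex-disjoint, so $x_1y_1\cup P\cup x_2y_2\cup P'$ is a genuine cycle meeting $\theta_1$ (in its two connecting edges) and $\theta_2$ (in an edge of $P$). The family of cycles carrying edges of both $\theta_1$ and $\theta_2$ is therefore nonempty; let $C$ be a shortest member.

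It remains to show that such a shortest $C$ is isometric, and this is where the main difficulty lies. The natural route is to assume $C$ is not isometric, choose a violating pair $p,q\in V(C)$ with $d_G(p,q)<d_C(p,q)$ and $d_C(p,q)$ minimal, and take a $p,q$-geodesic $R$ meeting $C$ only at $p,q$; then the two cycles $P_1\cup R$ and $P_2\cup R$ formed from the arcs of $C$ are both strictly shorter than $C$, and one hopes that one of them still carries both classes, contradicting minimality. The obstacle is exactly that this can fail: since $R$ crosses each class at most once, the two classes may ``split'' between the two shorter cycles, and — unlike the median case, where two crossing classes always lie on a common $4$-cycle — in a general partial cube the two classes need not lie on any short cycle together, so $C$ can be long and wind through the quadrants. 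Ruling out this splitting is the crux. I expect to handle it by strengthening the minimality of $C$ (for instance minimizing lexicographically the length and then the number of $\theta_1$- and $\theta_2$-edges on $C$) and exploiting the convexity of the four quadrants together with the $\Theta$-isomorphism of Observation \ref{obs:FInduceIsomorphism} to reroute $C$ through a missed quadrant whenever a shortcut would otherwise confine an arc to one half-space. A cleaner fallback I would keep in reserve is induction on the isometric dimension via the expansion characterization of partial cubes: an isometric cycle carrying both classes in the smaller partial cube lifts through an expansion, and the single newly created $\Theta$-class can be analysed directly. In either approach, the shortcut-elimination step for the extremal cycle is the part I anticipate to be hardest.
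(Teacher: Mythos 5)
Your ``if'' direction is complete and correct: since an isometric cycle $C$ preserves distances and $\Theta$ is defined purely in terms of distances, the restriction of $\Theta_G$ to $E(C)$ is $\Theta_C$, whose classes are the antipodal edge pairs of the even cycle $C$; two distinct antipodal pairs necessarily interleave, and the four arcs between them then put a vertex of $C$ into each of the four quadrants, which is the definition of crossing. The cycle construction that opens your ``only if'' direction (two $\theta_1$-edges on opposite sides of $\theta_2$, joined by geodesics confined to $W_{ab}$ and $W_{ba}$ via Proposition \ref{pro:WisConvex}) is also sound. For calibration: the paper does not prove this proposition at all --- it is quoted from \cite{km02} --- so there is no internal proof to compare against, and your attempt must stand on its own.

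It does not, because the entire content of the ``only if'' direction is the claim that some cycle carrying both classes can be taken isometric, and you explicitly leave that claim unproven. You correctly diagnose why the naive argument fails --- a chordal shortcut crosses each $\Theta$-class at most once, so the two shorter cycles it creates can split $\theta_1$ and $\theta_2$ between them, and in a non-median partial cube there is no $4$-cycle fallback (already in $C_6$ the three classes pairwise cross with no $4$-cycle present) --- but diagnosing the failure mode is not repairing it. Neither of your two sketched repairs is carried out, and neither is routine. For the extremal route, it is not even established that a length-minimal cycle through both classes is isometric; one typically needs a different extremal choice (e.g., minimizing the distance between a suitable pair of edges) together with a genuine isometricity verification for the resulting geodesic quadrilateral, and that verification is exactly the missing crux, supplied in \cite{km02}. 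For the expansion route, an isometric cycle of the pre-expansion graph meeting both sides of $G_0$ lifts only after inserting two edges of the new $\Theta$-class, its isometricity in the expanded graph must be re-proved, and the case where one of the two crossing classes \emph{is} the new class requires producing an isometric cycle through two prescribed far-apart edges --- essentially the full strength of the statement being proved. So the proposal establishes one direction and the easy half of the other; the decisive step is honestly flagged but absent, and the proof is incomplete.
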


\section{Proof of Theorem \ref{thm:CubePandCliqueP}}

The proof of Theorem \ref{thm:CubePandCliqueP} is organized as follows. First, we prove that the `$=$' in  (\ref{eq:CubePandCliqueP}) holds if $G$ is a median graph. Then, we prove that $C(G,x)<Cl(G^{\#},x+1)$ if $G$ is not a median graph.

\subsection{$C(G,x)=Cl(G^{\#},x+1)$ if $G$ is a median graph}
Klav\v zar and Mulder obtained the following lemma:

\begin{lem}{\em\cite{km02}}\label{lem:WeakCrossin4Cycle}
Let $G$ be a median graph and $uv,uw\in E(G)$. If $F_{uv}$ and $F_{uw}$ cross, then $v,u,w$ are in a 4-cycle.
\end{lem}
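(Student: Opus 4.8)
The plan is to fix an isometric embedding of $G$ into a hypercube $Q_n$ and to exhibit explicitly the fourth corner of the desired $4$-cycle. Since $G$ is bipartite and $uv,uw$ are distinct edges at $u$, the vertices $v,w$ lie in the same part, so $v\neq w$ and $d(v,w)=2$; moreover two adjacent edges are never in relation $\Theta$, so $F_{uv}\neq F_{uw}$ and these two classes correspond to two distinct coordinates, say coordinate $i$ for $F_{uv}$ and coordinate $j$ for $F_{uw}$ with $i\neq j$. Thus $v$ is obtained from $u$ by flipping coordinate $i$, and $w$ by flipping coordinate $j$. The natural candidate for the missing vertex is the string $z$ that agrees with $u$ in every coordinate except $i$ and $j$, where it takes the values of $v$ and $w$ respectively; if $z\in V(G)$ then $z$ is adjacent to both $v$ (they differ only in coordinate $j$) and $w$ (only in coordinate $i$), and $u,v,z,w$ is the required $4$-cycle. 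So the whole problem reduces to producing $z$ inside $G$.

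To produce $z$ I would feed a suitable triple into the median operation, after using the crossing hypothesis to manufacture a witness. By the equivalent description of crossing, $F_{uv}$ occurs in $G[W_{wu}]$, so there is an edge $xy\in F_{uv}$ with $x,y\in W_{wu}$; labelling so that $y\in W_{vu}$, the coordinatewise description of the half-spaces $W_{vu}$ and $W_{wu}$ yields $y_i=v_i$ and $y_j=w_j$. In other words, $y$ simultaneously records the value of $v$ in coordinate $i$ and the value of $w$ in coordinate $j$, which is exactly the data that $v$ and $w$ individually lack.

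I would then set $z$ to be the median of $v,w,y$, which exists in $V(G)$ because $G$ is a median graph. The key mechanism is that for an isometric subgraph of $Q_n$ the median of three vertices coincides with their coordinatewise majority, since distances are inherited from $Q_n$. Computing this majority coordinate by coordinate: in coordinate $i$ the three values are $v_i$, $w_i=u_i$, $y_i=v_i$, so the majority is $v_i$; in coordinate $j$ they are $v_j=u_j$, $w_j$, $y_j=w_j$, so the majority is $w_j$; and in every other coordinate $k$ both $v$ and $w$ agree with $u$, so the majority is $u_k$ irrespective of $y_k$. Hence $z$ has precisely the coordinates described above, and the $4$-cycle $u,v,z,w$ is complete.

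The step I expect to be the main obstacle is the careful bookkeeping that converts the crossing hypothesis into the two coordinate equalities $y_i=v_i$ and $y_j=w_j$: one must pin down on which side of each of the two classes the endpoint $y$ sits, and this is exactly where the hypothesis that the two classes \emph{cross} (rather than merely being distinct) is used. Once $y$ is correctly placed, the majority computation is automatic, and it is reassuring that the unknown coordinates $y_k$ ($k\neq i,j$) never interfere. An alternative route would start from Proposition~\ref{pro:CrossinIsometricCycle}, taking an isometric cycle that carries both classes; but such a cycle need not pass through $u$, so one would still have to transport it back to $u$ using convexity (Proposition~\ref{pro:UisConvex}), which seems less direct than the median–majority argument above.
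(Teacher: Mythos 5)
Your proof is correct, but there is nothing internal to compare it against: the paper states this lemma without proof, citing Klav\v zar and Mulder \cite{km02}. Your argument is a genuinely different, coordinate-based route. You identify the two $\Theta$-classes with two coordinates $i,j$ of an isometric embedding of $G$ into $Q_n$, identify $W_{vu}$ and $W_{wu}$ with the half-spaces $\{z:z_i=v_i\}$ and $\{z:z_j=w_j\}$, extract from the crossing hypothesis a vertex $y$ lying in both, and then use the fact that in a median graph embedded isometrically in $Q_n$ the median of $v,w,y$ must be the coordinatewise majority, which is exactly the missing corner of the square on $u,v,w$; all of this checks out, including your observation that the unknown coordinates of $y$ cannot influence the majority because $v$ and $w$ both agree with $u$ there. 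Two remarks. First, your detour through an edge $xy\in F_{uv}$ contained in $G[W_{wu}]$ is unnecessary: the paper's primary definition of crossing already guarantees $W_{vu}\cap W_{wu}\neq\emptyset$, and any vertex of that intersection serves as your $y$, so the bookkeeping you flagged as the ``main obstacle'' can be skipped entirely. Second, the background facts you invoke (each $\Theta$-class flips one fixed coordinate, $W_{uv}$ is a half-space, medians are coordinatewise majorities) are standard for partial cubes but are nowhere stated in the paper, so a fully self-contained write-up should include their short Hamming-distance verifications. For comparison, the machinery the paper itself develops for statements of this type --- convexity of the $U$-sets (Proposition \ref{pro:UisConvex}) together with the isomorphism induced by a $\Theta$-class (Observation \ref{obs:FInduceIsomorphism}), as used in its proof of the stronger Lemma \ref{lem:Crossin4Cycle} --- is coordinate-free; your approach trades that structural toolkit for explicit coordinates, which makes the existence of the fourth vertex a mechanical computation and produces the $4$-cycle through $v,u,w$ directly.
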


Let $G$ be a partial cube, $\theta_1,\theta_2$ two $\Theta$-classes. A 4-cycle $uvwxu$ is called to be {\em $\theta_1,\theta_2$-alternating} if $uv,xw\in\theta_1$ and $ux,vw\in\theta_2$. We have the following lemma, which is stronger than Lemma \ref{lem:WeakCrossin4Cycle}.

\begin{lem}\label{lem:Crossin4Cycle}
Let $G$ be a median graph, $\theta_1,\theta_2$ two $\Theta$-classes. Then $\theta_1$ and $\theta_2$ cross if and only if there exists a $\theta_1,\theta_2$-alternating 4-cycle.
\end{lem}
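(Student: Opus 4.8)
The plan is to prove the two implications separately, disposing of the easy ``if'' direction first and then reducing the harder ``only if'' direction to Lemma~\ref{lem:WeakCrossin4Cycle}. For the backward direction, suppose a $\theta_1,\theta_2$-alternating $4$-cycle $uvwxu$ exists. Since $G$ is bipartite it contains no triangle, so the two opposite pairs of vertices are non-adjacent and joined by paths of length $2$; hence all their distances in $G$ agree with those in the $4$-cycle and the cycle is isometric. Both $\theta_1$ and $\theta_2$ occur on it by the definition of alternating, so Proposition~\ref{pro:CrossinIsometricCycle} immediately yields that $\theta_1$ and $\theta_2$ cross. This direction uses neither the median property nor Lemma~\ref{lem:WeakCrossin4Cycle}.

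For the forward direction, write $\theta_1=F_{ab}$. The strategy is to exhibit two incident edges, one from each class, and then feed them into Lemma~\ref{lem:WeakCrossin4Cycle}. The key intermediate claim I would establish is that $\theta_2$ occurs on an edge with \emph{both} endpoints in $U_{ab}$. To prove this, I would use Proposition~\ref{pro:CrossinIsometricCycle} to place both classes on a common isometric cycle $C=c_0c_1\cdots c_{2k-1}$, arranged so that $c_0c_1\in\theta_1$. Since an occurring class contributes exactly two antipodal edges of an isometric cycle in a partial cube, the other $\theta_1$-edge is $c_kc_{k+1}$, and deleting $\theta_1$ splits $C$ into the geodesic arcs $c_1\cdots c_k\subseteq W_{ab}$ and $c_{k+1}\cdots c_0\subseteq W_{ba}$. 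One of the two antipodal $\theta_2$-edges, say $c_jc_{j+1}$ with $1\leqslant j\leqslant k-1$, then lies on the first arc.

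Now $c_1$ and $c_k$ are incident to the $\theta_1$-edges $c_0c_1$ and $c_kc_{k+1}$, so both lie in $U_{ab}$; and since $G[U_{ab}]$ is convex by Proposition~\ref{pro:UisConvex} while $c_1\cdots c_k$ is a $c_1,c_k$-geodesic, the entire arc lies in $U_{ab}$, giving $c_j,c_{j+1}\in U_{ab}$ and proving the claim. Because $c_j\in U_{ab}$ it is incident to some $\theta_1$-edge $c_jc_j^{*}$, and it is also incident to the $\theta_2$-edge $c_jc_{j+1}$; these are the two incident edges I want. Applying Lemma~\ref{lem:WeakCrossin4Cycle} to $c_j^{*},c_j,c_{j+1}$ produces a $4$-cycle $c_j^{*}c_jc_{j+1}zc_j^{*}$, and since opposite edges of a $4$-cycle in a partial cube lie in the same $\Theta$-class (immediate from the bipartite definition of $\Theta$), one reads off $c_{j+1}z\in\theta_1$ and $zc_j^{*}\in\theta_2$. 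Hence this $4$-cycle is $\theta_1,\theta_2$-alternating, finishing the argument.

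I expect the main obstacle to be precisely the intermediate claim that a $\theta_2$-edge can be found with both endpoints in $U_{ab}$. The naive attempt---take a shortest isometric cycle carrying both classes and argue it is a $4$-cycle---fails, because median graphs do contain longer isometric cycles (for example $Q_3$ has an isometric $6$-cycle on which two crossing classes need not be adjacent). What rescues the argument is the convexity of $U_{ab}$ from Proposition~\ref{pro:UisConvex}: it forces the whole geodesic arc, and in particular the relevant $\theta_2$-edge, into $G[U_{ab}]$, thereby manufacturing the incident pair of edges that Lemma~\ref{lem:WeakCrossin4Cycle} then turns into the desired alternating $4$-cycle.
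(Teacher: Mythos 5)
Your proof is correct, but your necessity argument follows a genuinely different route from the paper's. The paper uses neither isometric cycles nor Lemma~\ref{lem:WeakCrossin4Cycle} in that direction: it takes $\theta_2$-edges $a_1b_1$, $a_2b_2$ lying on the two sides of $\theta_1$ (guaranteed by the definition of crossing), joins $a_1$ to $a_2$ by a geodesic, which must contain a $\theta_1$-edge $c_1c_2$; convexity of $G[U_{a_1b_1}]$ (Proposition~\ref{pro:UisConvex}, applied to the class $\theta_2$) places $c_1,c_2\in U_{a_1b_1}$, so both are incident to $\theta_2$-edges $c_1d_1$, $c_2d_2$, and Observation~\ref{obs:FInduceIsomorphism} then shows $c_1c_2d_2d_1c_1$ is alternating in one stroke. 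You instead invoke Proposition~\ref{pro:CrossinIsometricCycle} to put both classes on a common isometric cycle, exploit the antipodal-pair structure of $\Theta$-classes on such a cycle, apply Proposition~\ref{pro:UisConvex} to the class $\theta_1$ to force a $\theta_2$-edge into $U_{ab}$, and finish with Lemma~\ref{lem:WeakCrossin4Cycle} together with the fact that opposite edges of a $4$-cycle are $\Theta$-related. Both arguments hinge on the same median-graph ingredient---convexity of the $U$-sets---but they apply it to different classes and complete the square by different means. What your route buys is a clean reduction to the cited Lemma~\ref{lem:WeakCrossin4Cycle} (which the paper states but never actually uses in this particular proof); what it costs is two auxiliary facts you assert without proof: that a $\Theta$-class meeting an isometric cycle of a partial cube does so in exactly one antipodal pair of edges (true, and it follows from Observation~\ref{obs:ThetaRestrictonSubgraph} combined with the fact that the $\Theta$-classes of an even cycle are precisely its antipodal edge pairs), and that opposite edges of a $4$-cycle are $\Theta$-related (immediate from the definition, as you note). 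Your sufficiency direction is identical to the paper's.
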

\begin{proof}
{\em Sufficiency.} Assume $C=uvwxu$ is a 4-cycle where $uv,xw\in\theta_1$ and $ux,vw\in\theta_2$. Since 4-cycles must be isometric in a bipartite graph, by Proposition \ref{pro:CrossinIsometricCycle}, $\theta_1$ and $\theta_2$ cross.

{\em Necessity.} Denote the two components of $G-\theta_1$ as $G_1,G_2$. By the definition of crossing, there exist $a_1b_1,a_2b_2\in\theta_2$ satisfying that $a_1b_1\in E(G_1)$, $a_2b_2\in E(G_2)$ and $d(a_1,a_2)=d(b_1,b_2)=d(a_1,b_2)-1=d(b_1,a_2)-1$. Let $P$ be an $a_1,a_2$-geodesic. Since $a_1\in V(G_1)$, $a_2\in V(G_2)$ and $\theta_1$ is an edge cutset, there exists an edge in $E(P)\cap\theta_1$, denoted by $c_1c_2$. Since $c_1,c_2$ is on $P$, which is an $a_1,a_2$-geodesic, by Proposition \ref{pro:UisConvex}, $c_1,c_2\in U_{a_1b_1}$. Thus, there exist edges $c_1d_1,c_2d_2\in\theta_2$. By Observation \ref{obs:FInduceIsomorphism}, $d_1,d_2$ are adjacent and $d_1d_2\in\theta_1$, that is, the 4-cycle $c_1c_2d_2d_1c_1$ is $\theta_1,\theta_2$-alternating.
\end{proof}

Let $G$ be a partial cube, $H$ a convex subgraph of $G$ and $\theta_1,\theta_2$ two crossing $\Theta$-classes. If there is a $\theta_1,\theta_2$-alternating 4-cycle in $H$, we say that $\theta_1$ and $\theta_2$ {\em cross in} $H$.

\begin{lem}\label{lem:CrossinConvexSubgraph}
Let $G$ be a median graph, $H$ a convex subgraph of $G$ and $\theta_1,\theta_2$ two crossing $\Theta$-classes. If both $\theta_1$ and $\theta_2$ occur in $H$, then they cross in $H$.
\end{lem}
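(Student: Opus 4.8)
The plan is to reduce the statement to Lemma \ref{lem:Crossin4Cycle} applied \emph{inside} $H$. Since $H$ is convex in the median graph $G$, it is itself a median graph, and by Observation \ref{obs:ThetaRestrictonSubgraph} the restrictions $\theta_1'=\theta_1\cap E(H)$ and $\theta_2'=\theta_2\cap E(H)$ are $\Theta_H$-classes of $H$ (both nonempty, since $\theta_1,\theta_2$ occur in $H$). Because $H$ is isometric, the two sides of a $\Theta$-class restrict correctly: for an edge $uv\in E(H)$ one has $W^H_{uv}=W^G_{uv}\cap V(H)$. Thus, to prove that $\theta_1$ and $\theta_2$ cross in $H$ (that is, that $H$ contains a $\theta_1,\theta_2$-alternating $4$-cycle), it suffices to verify the crossing condition for $\theta_1',\theta_2'$ inside $H$ --- namely that each of the four ``quadrants'' $W_{ab}\cap W_{cd}\cap V(H)$, for the four choices of sides of $\theta_1=F_{ab}$ and $\theta_2=F_{cd}$, is nonempty --- and then invoke Lemma \ref{lem:Crossin4Cycle} for the median graph $H$ with its crossing classes $\theta_1',\theta_2'$.

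First I would locate three of the four quadrants directly from the hypothesis. Fix edges $pq\in\theta_1\cap E(H)$ and $rs\in\theta_2\cap E(H)$. Since a $\Theta$-class is an edge cut whose crossing edges are exactly that class, the endpoints $p,q$ lie on opposite sides of $\theta_1$ but on the same side of $\theta_2$, and symmetrically $r,s$ lie on opposite sides of $\theta_2$ but on the same side of $\theta_1$. Hence $p,q$ occupy two quadrants sharing a side of $\theta_2$, while $r,s$ occupy two quadrants sharing a side of $\theta_1$; these two pairs cannot coincide, so together they meet at least three of the four quadrants. After relabelling the sides of $\theta_1$ and $\theta_2$, I may assume the only possibly-missing quadrant is $W_{ba}\cap W_{dc}$, and that its two neighbouring quadrants already contain vertices of $H$, say $q\in W_{ba}\cap W_{cd}\cap V(H)$ and $s\in W_{ab}\cap W_{dc}\cap V(H)$.

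The remaining and main point is to produce a vertex of $H$ in the fourth quadrant $W_{ba}\cap W_{dc}$. Since $\theta_1,\theta_2$ cross in $G$, this quadrant is nonempty in $G$; pick $z\in W_{ba}\cap W_{dc}\cap V(G)$. Let $m$ be the median of $q,s,z$ in $G$, which lies simultaneously on a $q,z$-geodesic, an $s,z$-geodesic and a $q,s$-geodesic. As $q,z\in W_{ba}$ and $G[W_{ba}]$ is convex by Proposition \ref{pro:WisConvex}, the whole $q,z$-geodesic, and in particular $m$, lies in $W_{ba}$; likewise $s,z\in W_{dc}$ forces $m\in W_{dc}$; and since $q,s\in V(H)$ with $H$ convex, the $q,s$-geodesic keeps $m$ in $V(H)$. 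Therefore $m\in W_{ba}\cap W_{dc}\cap V(H)$, completing the fourth quadrant. With all four quadrants of $H$ nonempty, $\theta_1'$ and $\theta_2'$ cross in $H$, and Lemma \ref{lem:Crossin4Cycle} yields a $\theta_1',\theta_2'$-alternating --- hence $\theta_1,\theta_2$-alternating --- $4$-cycle inside $H$, as required. I expect the one delicate step to be exactly this construction of the fourth-quadrant vertex within $H$: the two edges alone guarantee only three quadrants, and it is the convexity of the two halfspaces together with the convexity of $H$ that pins the median $m$ into the intersection of all three convex sets.
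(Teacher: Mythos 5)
Your proof is correct, and it takes a genuinely different route from the paper's. The paper works with representative edges $e\in\theta_1\cap E(H)$ and $f\in\theta_2\cap E(H)$ and splits into cases: when $e,f$ are adjacent, Lemma \ref{lem:WeakCrossin4Cycle} plus convexity of $H$ gives the alternating $4$-cycle at once; when they are not, it takes a $\theta_1,\theta_2$-alternating $4$-cycle $C$ somewhere in $G$ (Lemma \ref{lem:Crossin4Cycle}), observes via Proposition \ref{pro:ConvexityLemma} that $C$ lies either entirely inside $H$ or entirely outside it, and in the latter case forms the median $v'$ of a vertex $a$ of $C$ with the endpoints $v,x$ of $e,f$, using Proposition \ref{pro:UisConvex} to place $v'$ in $U_{vu}\cap U_{xw}$ and convexity of $H$ to place it in $H$, thereby reducing to the adjacent case. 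You instead observe that $H$ is itself a median graph whose $\Theta$-classes and $W$-sets are the restrictions of those of $G$ (Observation \ref{obs:ThetaRestrictonSubgraph} together with isometry of $H$), so it suffices to verify the four-quadrant definition of crossing for the restricted classes inside $H$ and then apply Lemma \ref{lem:Crossin4Cycle} to the median graph $H$. The endpoints of the two representative edges populate three quadrants for free, and your median $m$ of $q,s,z$ --- trapped in $W_{ba}\cap W_{dc}\cap V(H)$ by the convexity of the two half-spaces (Proposition \ref{pro:WisConvex}) and of $H$ --- fills the fourth; this step is sound, since the median lies on a geodesic between each pair and all three sets are convex. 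Both arguments thus turn on the same median-plus-convexity mechanism, but yours needs no adjacency case distinction and dispenses with Lemma \ref{lem:WeakCrossin4Cycle} and Proposition \ref{pro:UisConvex}, at the modest cost of spelling out the routine facts that $\Theta$-classes, $W$-sets, and medianness restrict to convex subgraphs; the paper's version stays closer to the $4$-cycle picture but must handle the geometry of where $C$ sits relative to $H$.
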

\begin{proof}
Assume $e=uv$, $f=xw$ are two edges such that $e\in E(H)\cap\theta_1$ and $f\in E(H)\cap\theta_2$. We distinguish two cases to discuss.

\textbf{Case 1.} $e$ and $f$ are adjacent.

W.l.o.g., assume $v=x$. By Lemma \ref{lem:WeakCrossin4Cycle}, $u,v,w$ are in a 4-cycle, say $uvwyu$. Since $u,v,w\in V(H)$, the 4-cycle $uvwyu$ is in $H$ by the convexity of $H$.  Thus, $\theta_1$ and $\theta_2$ cross in $H$.

\textbf{Case 2.} $e$ and $f$ are not adjacent.

W.l.o.g., assume $d(v,x)=d(u,x)-1=d(v,w)-1$. Since $\theta_1$ and $\theta_2$ cross, by Lemma \ref{lem:Crossin4Cycle}, there exists a $\theta_1,\theta_2$-alternating 4-cycle, denoted by $C=abcda$. W.l.o.g., we assume $ab,cd\in\theta_1$, $ad,bc\in\theta_2$, $a\in U_{vu}\cap U_{xw}$, $b\in U_{uv}\cap U_{xw}$, $c\in U_{uv}\cap U_{wx}$ and $d\in U_{vu}\cap U_{wx}$. If $C$ is in $H$, the lemma holds. If some vertices of $C$ are in $H$ and others are not, then there are some edges of $C$ in $\partial\, H$, a contradiction with Proposition \ref{pro:ConvexityLemma}. Now assume $C$ is in $G-H$. By the definition of median graphs, let $v'$ be the median of $a,v,x$. Since $a,v\in U_{vu}$, $a,x\in U_{xw}$ and $v'$ is on both an $a,v$-geodesic and an $a,x$-geodesic, by Proposition \ref{pro:UisConvex}, $v'\in U_{vu}\cap U_{xw}$. Then there exist edges $v'u'$, $v'w'$ such that $v'u'\in \theta_1$ and $v'w'\in \theta_2$ (see Fig. \ref{fig:CrossG0}). Since $v'$ is on a $v,x$-geodesic, $v'\in V(H)$ by the convexity of $H$. Then $u',w'\in V(H)$ by Proposition \ref{pro:ConvexityLemma}. After considering the edges $v'u'$, $v'w'$ which is similar to Case 1, we conclude that $\theta_1$ and $\theta_2$ cross in $H$.
\end{proof}

\begin{figure}[!htbp]
\centering
\scalebox{0.4}[0.4]{\includegraphics{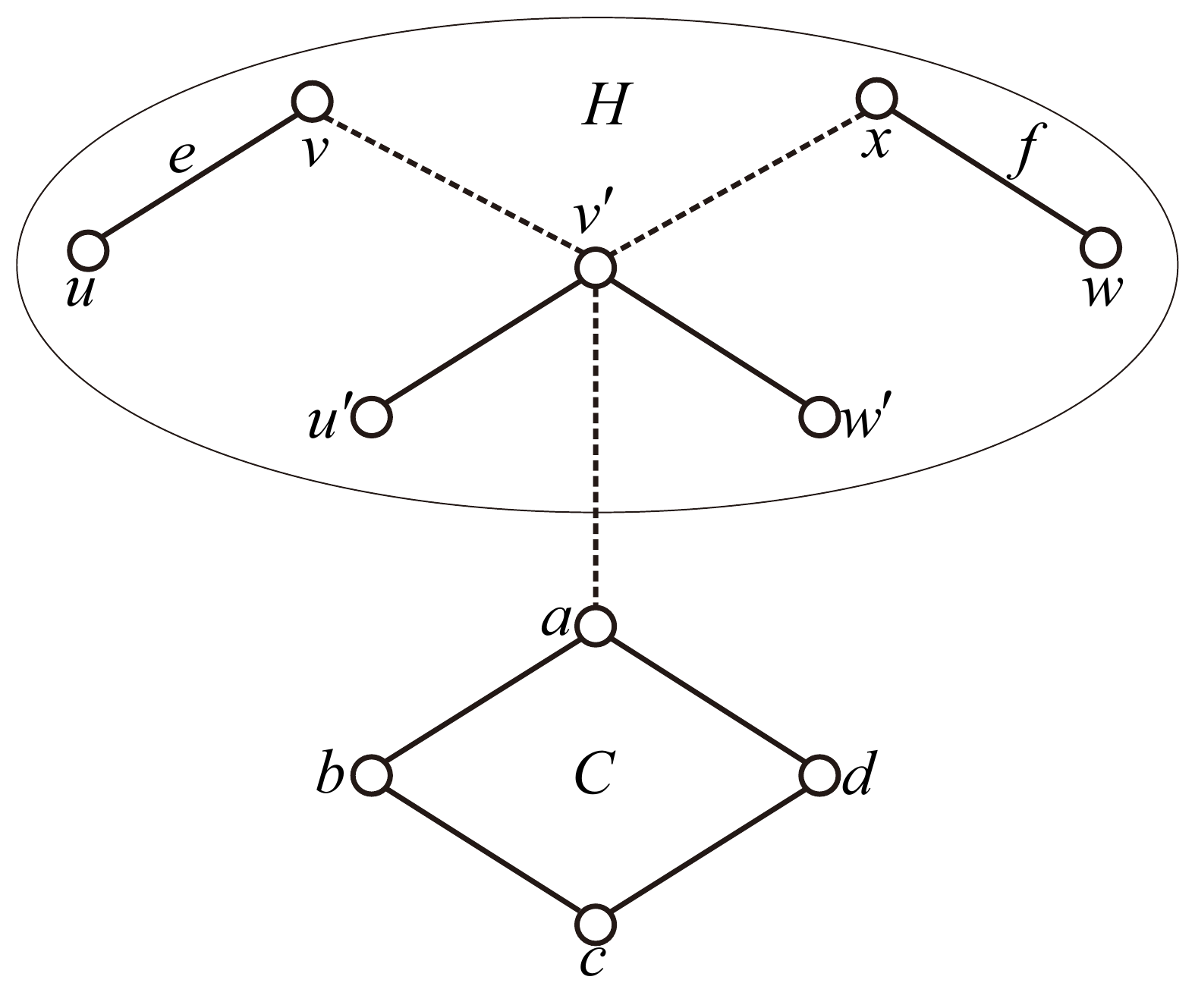}}
\caption{Illustration for Case 2 in proof of Lemma \ref{lem:CrossinConvexSubgraph}}{\label{fig:CrossG0}}
\end{figure}

Now, we consider the crossing graphs. Let $G$ be a partial cube, $G^{\#}$ its crossing graph. If $v$ is a vertex in $G^{\#}$, we denote its corresponding $\Theta$-class in $G$ by $\theta_v$ in what follows. Let $H$ be a convex subgraph of $G$. The crossing graph of $H$ is denoted by $H^{\#}$  (by Observation \ref{obs:ThetaRestrictonSubgraph}, $H^{\#}$ is well-defined since the convex subgraph of a partial cube is still a partial cube). Then, the statement `$v\in V(H^{\#})$' is corresponding to `$\theta_v$ occurs in $H$', and `$uv\in E(H^{\#})$' is corresponding to `$\theta_u$, $\theta_v$ cross in $H$'. By Lemma \ref{lem:CrossinConvexSubgraph}, we can obtain the following lemma easily.

\begin{lem}\label{lem:ConvexandInducedSubgraph}
Let $G$ be a median graph. If $H$ is convex subgraph of $G$, then $H^{\#}$ is an induced subgraph of $G^{\#}$.
\end{lem}

A pair of induced subgraphs $\{G_1,G_2\}$ of $G$ is called a {\em cubical cover} if $G=G_1\cup G_2$ and every induced hypercube in $G$ is contained in at least one of the $G_1$ and $G_2$. Bre\v sar et al. gave a recursive formula of cube polynomials about the expansion with respect to the cubical cover:

\begin{lem}{\em\cite{bks03}}\label{lem:RecursiveFormulaofCubeP}
Let $G$ be a graph constructed by the expansion with respect to the cubical cover $\{G_1,G_2\}$ with $G_0=G_1\cap G_2$. Then
\begin{equation*}
C(G,x)=C(G_1,x)+C(G_2,x)+xC(G_0,x).
\end{equation*}
\end{lem}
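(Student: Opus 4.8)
The plan is to count the induced $i$-cubes of the expansion $G$ according to whether or not they use an edge of the new $\Theta$-class created by the expansion. Recall that $G$ is the disjoint union of two isomorphic copies $G_1^{*}\cong G_1$ and $G_2^{*}\cong G_2$, together with the matching $M=\{u_1u_2 : u\in V(G_0)\}$ joining the corresponding vertices of the two copies of $G_0$, and that $M$ is precisely this new $\Theta$-class. Since $M$ is the only set of edges joining $V(G_1^{*})$ to $V(G_2^{*})$, I would fix $i\geqslant 0$ and split the induced $i$-cubes of $G$ into those that avoid $M$ and those that contain an edge of $M$, computing the two contributions separately.

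For a cube avoiding $M$: being connected and $M$ being the only bridge between the copies, it lies entirely in $V(G_1^{*})$ or entirely in $V(G_2^{*})$; as $G_1^{*}$ and $G_2^{*}$ are induced subgraphs of $G$ isomorphic to $G_1$ and $G_2$, such cubes are counted by $\alpha_i(G_1)+\alpha_i(G_2)$, with no overlap because $V(G_1^{*})\cap V(G_2^{*})=\emptyset$ (the two copies of a cube of $G_0$ being genuinely distinct in $G$). For a cube $Q$ meeting $M$: I would first argue that the edges of $M$ contained in $Q$ form exactly one of its $i$ dimensions, so that $Q$ is the prism over a pair of opposite $(i-1)$-faces $D_1\subseteq V(G_1^{*})$ and $D_2\subseteq V(G_2^{*})$ joined by $M$. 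Since every vertex of $D_1$ is incident with an edge of $M$, $D_1$ lies in the copy of $G_0$ inside $G_1^{*}$ and $D_2$ in the copy inside $G_2^{*}$, and $D_1,D_2$ are the two copies of one induced $(i-1)$-cube $D$ of $G_0$. Conversely the prism over any induced $(i-1)$-cube of $G_0$ is an induced $i$-cube of $G$ meeting $M$ (the only possible extra edges lie within the two copies of $G_0$ or in $M$, and all are ruled out because $D$ is induced and $M$ is a matching), so these cubes are in bijection with the induced $(i-1)$-cubes of $G_0$ and number $\alpha_{i-1}(G_0)$.

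Putting the two contributions together gives $\alpha_i(G)=\alpha_i(G_1)+\alpha_i(G_2)+\alpha_{i-1}(G_0)$ for every $i\geqslant 0$ (with $\alpha_{-1}(G_0)=0$ handling the constant term), and multiplying by $x^i$ and summing yields $C(G,x)=C(G_1,x)+C(G_2,x)+xC(G_0,x)$. I expect the only delicate step to be the prism description of the cubes meeting $M$. Its crux is the claim that the $M$-edges of $Q$ form a single dimension: this follows because an induced hypercube of a partial cube is isometric, so by Observation \ref{obs:ThetaRestrictonSubgraph} the $\Theta$-classes of $G$ restrict to the dimensions of $Q$, whence the single $\Theta$-class $M$ can meet $Q$ in at most one dimension, and if it meets one edge of a dimension it meets them all. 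The remaining inducedness checks and the exhaustiveness of the dichotomy are routine consequences of the expansion structure, in which the two copies interact only through the matching $M$ on $G_0$; this is also the point at which I would confirm how the cubical-cover hypothesis enters.
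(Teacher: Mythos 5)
The paper itself gives no proof of this lemma --- it is quoted from \cite{bks03} --- so your argument has to stand on its own. Its skeleton is the correct (and natural) one: split the induced $i$-cubes of the expansion $G$ according to whether they use an edge of the matching $M$; the $M$-avoiding ones lie in exactly one of the disjoint induced copies $G_1^*,G_2^*$ and are counted by $\alpha_i(G_1)+\alpha_i(G_2)$, while the $M$-using ones are prisms over induced $(i-1)$-cubes of $G_0$ and are counted by $\alpha_{i-1}(G_0)$; here one also needs that $G_0=G_1\cap G_2$ is induced in both $G_1$ and $G_2$, which follows because $G_1,G_2$ are isometric, hence induced, in $G_1\cup G_2$. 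This yields $\alpha_i(G)=\alpha_i(G_1)+\alpha_i(G_2)+\alpha_{i-1}(G_0)$ for all $i$ and hence the lemma.

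The genuine gap is in your justification of what you rightly call the crux: that the $M$-edges of an induced cube $Q$ form a single dimension class. You obtain this by treating $M$ as ``the new $\Theta$-class'' of $G$ and applying Observation \ref{obs:ThetaRestrictonSubgraph} to $Q$, which you assert is isometric. This presupposes that the expansion $G$ is a partial cube, but that is not among the hypotheses: the lemma (here and in \cite{bks03}) concerns the expansion of an \emph{arbitrary} graph, and if $G_1\cup G_2$ contains an odd cycle then $G$ is not bipartite, $\Theta_G$ is not an equivalence relation, and ``the new $\Theta$-class'' is undefined --- the paper's remark that the matching edges form a $\Theta$-class is explicitly conditional on $G$ being a partial cube. (Even in that special case you also use, without proof, that induced hypercubes of partial cubes are isometric; this is true but nowhere established in the paper.) Fortunately the crux has an elementary, hypothesis-free proof: $E(Q)\cap M$ is exactly the set of edges of $Q$ crossing the partition $\bigl(V(Q)\cap V(G_1^*),\,V(Q)\cap V(G_2^*)\bigr)$, hence an edge cut of $Q$, and it is a matching because $M$ is a matching in $G$; and in a hypercube, any edge cut that is a matching is a full parallel class. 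Indeed, if $uv$ is a cut edge and $uvzw$ is any $4$-cycle through it, the matching property forces $z$ onto $v$'s side and $w$ onto $u$'s side, so the opposite edge $zw$ is again a cut edge; since any two parallel edges of a hypercube are joined by a chain of such $4$-cycles, the whole parallel class of $uv$ lies in the cut, and as that class is a perfect matching of $Q$, no further edge can be in the cut. Substituting this argument closes the gap. Your closing worry then resolves itself: in this formulation the cubical-cover hypothesis is never needed (your count uses only the expansion structure), and it becomes relevant only if one wants to rewrite $C(G_1,x)+C(G_2,x)-C(G_0,x)$ as the cube polynomial of the pre-expansion graph $G_1\cup G_2$, an identity that holds precisely for cubical covers; you should say explicitly that your proof does not use it rather than leave its role unresolved.
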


If an expansion with respect to $\{G_1,G_2\}$ with $G_0=G_1\cap G_2$ is peripheral, i.e., $G_0=G_2$, then it is obvious that $\{G_1,G_2\}$ is a cubical cover. Thus, by Proposition \ref{pro:ConvexPeripheralExpansions}, for median graphs, we have

\begin{cor}\label{lem:RecursiveFormulaforMedianGraph}
Let $G$ be a median graph constructed by the peripheral convex expansion with respect to $\{G_0,G_1\}$ where $G_0$ is a convex subgraph of $G_1$. Then
\begin{equation}\label{eq:RecursiveFormulaforMedianGraph}
C(G,x)=C(G_1,x)+(x+1)C(G_0,x).
\end{equation}
\end{cor}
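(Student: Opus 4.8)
The goal is to prove the recursive formula
\begin{equation*}
C(G,x)=C(G_1,x)+(x+1)C(G_0,x)
\end{equation*}
for a median graph $G$ obtained from $\{G_0,G_1\}$ by a peripheral convex expansion, where $G_0$ is convex in $G_1$. The plan is to specialize the general cubical-cover formula of Lemma \ref{lem:RecursiveFormulaofCubeP} to the peripheral case. First I would verify that a peripheral convex expansion with respect to $\{G_0,G_1\}$ fits the hypotheses of Lemma \ref{lem:RecursiveFormulaofCubeP}, i.e., that $\{G_1,G_2\}$ is a cubical cover with $G_2=G_0$. The construction of the expansion produces two isomorphic copies $G_1^*,G_2^*$ of $G_1,G_0$ glued along the corresponding vertices of the intersection, and since the expansion is peripheral we have $G_0=G_2$, so the second piece of the cover is just (an isomorphic copy of) $G_0$ itself. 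The remark immediately preceding the corollary already notes that in the peripheral case $\{G_1,G_2\}$ is automatically a cubical cover, so this hypothesis is satisfied.

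With $G_2=G_0$ and $G_1\cap G_2=G_0$, the general formula $C(G,x)=C(G_1,x)+C(G_2,x)+xC(G_0,x)$ becomes
\begin{equation*}
C(G,x)=C(G_1,x)+C(G_0,x)+xC(G_0,x)=C(G_1,x)+(x+1)C(G_0,x),
\end{equation*}
which is exactly \eqref{eq:RecursiveFormulaforMedianGraph}. The only subtlety is recognizing that the cube polynomial is invariant under isomorphism, so that $C(G_2^*,x)=C(G_0,x)$; this is immediate since $\alpha_i$ counts induced $i$-cubes, a graph-isomorphism invariant.

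The main step requiring care, and the one I would expect to be the genuine obstacle, is confirming that a peripheral expansion really is a special case of an expansion with respect to a cubical cover in the precise sense demanded by Lemma \ref{lem:RecursiveFormulaofCubeP}. One must check that $G=G_1\cup G_2$ (as induced subgraphs of the expanded graph) and that every induced hypercube of $G$ lies entirely within $G_1$ or within $G_2$. The reason this holds is that the newly created $\Theta$-class consisting of the matching edges $u_1u_2$ (for $u\in G_0$) cannot, together with the expansion, create an induced hypercube straddling both copies unless it is captured by $G_0$: any induced cube using an edge of the new $\Theta$-class projects down to an induced cube of $G_0$, and hence the portion of it lying in either copy is already an induced cube of $G_1$ or $G_0$. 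Since $G_0$ is convex in $G_1$ (and thus the expansion is convex as well as peripheral), this projection argument goes through cleanly. Once the cubical-cover structure is established, the corollary is a one-line substitution into Lemma \ref{lem:RecursiveFormulaofCubeP}, invoking Proposition \ref{pro:ConvexPeripheralExpansions} only to guarantee that such peripheral convex expansions are the building blocks of median graphs.
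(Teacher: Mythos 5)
Your proposal is correct and follows essentially the same route as the paper: the corollary is an immediate specialization of Lemma \ref{lem:RecursiveFormulaofCubeP} with $G_2=G_0$, exactly as indicated in the remark preceding the corollary, followed by the substitution $C(G_2,x)=C(G_0,x)$.

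One clarification about the step you single out as ``the genuine obstacle'': the cubical-cover condition in Lemma \ref{lem:RecursiveFormulaofCubeP} is a condition on the \emph{pre-expansion} graph $G_1\cup G_2$, not on the expanded graph $G$. In the peripheral case $G_2=G_0\subseteq G_1$, the pre-expansion graph is just $G_1$ itself, so every induced hypercube of it trivially lies in $G_1$; this is why the paper can call the hypothesis obvious. The projection argument you sketch---that an induced cube of the expanded graph straddling the two copies projects down to a cube of $G_0$---is not something your proof needs to supply: it is precisely the content of the cited Lemma \ref{lem:RecursiveFormulaofCubeP} (proved in \cite{bks03}), and re-deriving it would amount to re-proving that lemma rather than applying it. So the check you worried about is trivial, and the rest of your argument stands as written.
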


Recall that $G_v=G[N_G(v)]$. The recursive formula of clique polynomials is given by the following lemma:

\begin{lem}{\em\cite{hl94}}\label{lem:RecursiveFormulaofCliqueP}
Let $G$ be a graph, $v$ a vertex of $G$. Then
\begin{equation}\label{eq:RecursiveFormulaofCliqueP}
Cl(G,x)=Cl(G-v,x)+xCl(G_v,x).
\end{equation}
\end{lem}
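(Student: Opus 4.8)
The plan is to prove the identity by extracting and comparing the coefficient of each power $x^i$ on both sides, reducing the polynomial identity to a family of counting identities among the clique numbers $a_i$. First I would fix $i\geqslant 1$ and partition the set of $i$-cliques of $G$ into those that avoid $v$ and those that contain $v$. The cliques avoiding $v$ are precisely the $i$-cliques of the induced subgraph $G-v$, so they contribute $a_i(G-v)$.

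For the cliques containing $v$, the key step is to set up a bijection with the $(i-1)$-cliques of $G_v=G[N_G(v)]$. Given an $i$-clique $K$ with $v\in K$, the set $K\setminus\{v\}$ consists of $i-1$ vertices, each adjacent to $v$ (hence lying in $N_G(v)$) and pairwise adjacent; since $G_v$ is the subgraph induced on $N_G(v)$, these pairwise adjacencies are inherited, so $K\setminus\{v\}$ is an $(i-1)$-clique of $G_v$. Conversely, for any $(i-1)$-clique $S$ of $G_v$, every vertex of $S$ lies in $N_G(v)$ and is therefore adjacent to $v$, so $S\cup\{v\}$ is an $i$-clique of $G$ containing $v$. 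These two maps are mutually inverse, giving $a_i(G)=a_i(G-v)+a_{i-1}(G_v)$ for every $i\geqslant 1$.

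It then remains to check that these coefficient identities assemble into the claimed polynomial identity, paying attention to the index shift produced by the factor $x$ and to the constant-term convention. Writing out $Cl(G-v,x)+xCl(G_v,x)$, the factor $x$ shifts the coefficients of $Cl(G_v,x)$ upward by one degree, so the coefficient of $x^i$ for $i\geqslant 1$ is exactly $a_i(G-v)+a_{i-1}(G_v)$, which matches $a_i(G)$ by the previous step. The constant term on the right is $a_0(G-v)=1=a_0(G)$, consistent with the convention $a_0=1$, while $xCl(G_v,x)$ contributes nothing in degree $0$. Summing over all $i$ then yields the desired equality $Cl(G,x)=Cl(G-v,x)+xCl(G_v,x)$.

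I expect the only genuine point requiring care to be the bijection in the second paragraph, specifically the verification that adjacency within $K\setminus\{v\}$ is faithfully recorded by $G_v$; this is exactly where the choice $G_v=G[N_G(v)]$ (rather than some larger subgraph) matters, since it guarantees that a set of common neighbours of $v$ forms a clique in $G$ if and only if it forms a clique in $G_v$. The indexing bookkeeping and the $a_0$ convention are routine once this equivalence is in place.
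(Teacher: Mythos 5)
Your proof is correct: partitioning the $i$-cliques of $G$ according to whether they contain $v$, and the bijection $K\mapsto K\setminus\{v\}$ between $i$-cliques containing $v$ and $(i-1)$-cliques of $G_v$ (with the $a_0$-convention correctly absorbing the $i=1$ case, where the empty clique of $G_v$ accounts for $\{v\}$), is exactly the standard argument. The paper itself states this lemma without proof, citing Hoede and Li \cite{hl94}, and your argument coincides with the classical proof of that result, so there is nothing to add.
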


Now, we give the main result of this subsection.

\begin{thm}\label{thm:=forMedianGraph}
Let $G$ be a median graph and $G\neq K_1$. Then
\begin{equation}\label{eq:=forMedianGraph}
C(G,x)=Cl(G^{\#},x+1).
\end{equation}
\end{thm}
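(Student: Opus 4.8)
The plan is to induct on the isometric dimension $\mathrm{idim}(G)$, equivalently on the number of $\Theta$-classes, using the fact (Proposition \ref{pro:ConvexPeripheralExpansions}) that every median graph is built from $K_1$ by peripheral convex expansions. The two recursive formulas \eqref{eq:RecursiveFormulaforMedianGraph} and \eqref{eq:RecursiveFormulaofCliqueP} are tailored to match, so the heart of the argument will be to show that a single peripheral convex expansion on the $G$-side corresponds exactly to adding one vertex on the crossing-graph side.

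First I would set up the base case. When $G$ has a single $\Theta$-class, $G=K_2$, so $C(K_2,x)=2+x$ and $G^{\#}=K_1$, giving $Cl(K_1,x+1)=1+(x+1)=2+x$; the equality holds. For the inductive step, suppose $G$ is obtained from $G_1$ by a peripheral convex expansion with respect to $\{G_0,G_1\}$, where $G_0$ is convex in $G_1$ and $G_1=G_2$ is the whole of one side. Let $\theta$ denote the new $\Theta$-class created by the expansion, and let $w$ be the corresponding new vertex of $G^{\#}$. By Corollary \ref{lem:RecursiveFormulaforMedianGraph},
\begin{equation*}
C(G,x)=C(G_1,x)+(x+1)C(G_0,x).
\end{equation*}
Both $G_1$ and $G_0$ are median graphs of smaller isometric dimension (convex subgraphs of median graphs are median), so by the induction hypothesis $C(G_1,x)=Cl(G_1^{\#},x+1)$ and $C(G_0,x)=Cl(G_0^{\#},x+1)$.

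The key structural claim I would prove is that $G^{\#}$ is obtained from $G_1^{\#}$ by adding the single vertex $w$, whose neighbourhood in $G^{\#}$ is exactly $G_0^{\#}$; that is, $G^{\#}-w=G_1^{\#}$ and $(G^{\#})_w=G_0^{\#}$. The first part holds because the $\Theta$-classes of $G$ other than $\theta$ are in bijection with those of $G_1$, and by Lemma \ref{lem:ConvexandInducedSubgraph} (with $G_1$ convex in $G$) crossing relations among them are preserved. For the second part I must show that a $\Theta$-class $\theta_v\neq\theta$ crosses $\theta$ in $G$ if and only if $\theta_v$ occurs in $G_0$: since the new edges of $\theta$ form a perfect matching between the two copies of $G_0$, a $\theta,\theta_v$-alternating $4$-cycle (Lemma \ref{lem:Crossin4Cycle}) exists precisely when $\theta_v$ has an edge inside the copied subgraph $G_0$, which by Lemma \ref{lem:CrossinConvexSubgraph} and Lemma \ref{lem:ConvexandInducedSubgraph} means $\theta_v\in V(G_0^{\#})$ and the induced neighbourhood is $G_0^{\#}$. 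Once this claim is in hand, Lemma \ref{lem:RecursiveFormulaofCliqueP} applied at $v=w$ gives
\begin{equation*}
Cl(G^{\#},x+1)=Cl(G^{\#}-w,x+1)+(x+1)Cl((G^{\#})_w,x+1)=Cl(G_1^{\#},x+1)+(x+1)Cl(G_0^{\#},x+1),
\end{equation*}
which matches the expansion of $C(G,x)$ term by term and closes the induction.

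The main obstacle I anticipate is precisely the structural claim $(G^{\#})_w=G_0^{\#}$ — verifying that the neighbourhood of the new vertex is an \emph{induced} copy of $G_0^{\#}$, not merely that the crossing vertices correspond to $\Theta$-classes meeting $G_0$. This requires care in two directions: that every $\Theta$-class occurring in $G_0$ genuinely crosses $\theta$ (using that the expansion duplicates $G_0$ so each such class furnishes an alternating $4$-cycle through $\theta$), and that no $\Theta$-class \emph{avoiding} $G_0$ can cross $\theta$ (using convexity of $G_0$ and the peripheral structure, via Proposition \ref{pro:ConvexityLemma}). The matching of induced adjacencies, rather than just vertex sets, is where Lemma \ref{lem:CrossinConvexSubgraph} does the real work, so I would state and prove this identification as a self-contained lemma before assembling the polynomial identity.
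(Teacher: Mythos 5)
Your proposal is correct and follows essentially the same route as the paper's own proof: induction on $\mathrm{idim}(G)$ via peripheral convex expansions, matching the cube-polynomial recursion of Corollary \ref{lem:RecursiveFormulaforMedianGraph} against the clique-polynomial recursion of Lemma \ref{lem:RecursiveFormulaofCliqueP} through the identifications $G^{\#}-v=G_1^{\#}$ and $(G^{\#})_v=G_0^{\#}$. The only cosmetic difference is that the paper verifies $G_1^{\#}=G^{\#}-v$ via the $\Theta$-preserving isomorphism between the two copies of $G_0$ (Observation \ref{obs:FInduceIsomorphism}), whereas you invoke convexity of $G_1$ in $G$ together with Lemma \ref{lem:ConvexandInducedSubgraph}; both are valid.
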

\begin{proof}
We prove the equality (\ref{eq:=forMedianGraph}) by induction on $\mathrm{idim}(G)$ (or equivalently, $|V(G^{\#})|$). 

For the base case when $\mathrm{idim}(G)=1$, $G\cong K_2$, $G^{\#}\cong K_1$. In this case, $C(G,x)=x+2$, $Cl(G^{\#},x)=x+1$. So the equality (\ref{eq:=forMedianGraph}) holds.

Now, assume (\ref{eq:=forMedianGraph}) holds for all median graphs with isometric dimension at most $n-1$. Let $G$ be a median graph with $\mathrm{idim}(G)=n$. By Proposition \ref{pro:ConvexPeripheralExpansions}, $G$ can be obtained from a median graph $G_1$ by a peripheral convex expansion with respect to $\{G_0,G_1\}$ where $G_0$ is a convex subgraph of $G_1$. Then, by the definition of median graphs, $G_0$ is a median graph and further $\mathrm{idim}(G_1)=n-1$, $\mathrm{idim}(G_0)\leqslant n-1$. By (\ref{eq:RecursiveFormulaforMedianGraph}) and the induction hypothesis, we have
\begin{equation}\label{eq:InductionHypothesis}
C(G,x)=C(G_1,x)+(x+1)C(G_0,x)=Cl(G_1^{\#},x+1)+(x+1)Cl(G_0^{\#},x+1).
\end{equation}
Let $\theta_v$ be the $\Theta$-class obtained by the peripheral convex expansion. By Lemma \ref{lem:RecursiveFormulaofCliqueP}, we only need to prove that $G_1^{\#}=G^{\#}-v$ and $G_0^{\#}=(G^{\#})_v$ (to avoid confusion, we denote the subgraph induced by the neighbourhood of $v$ in $G^{\#}$ by $(G^{\#})_v$).

Let $ab$ be an edge in $\theta_v$. W.l.o.g., assume $G[W_{ab}]=G[U_{ab}]=G_0$, $G[W_{ba}]=G_1$. We denote $G[U_{ba}]:=G'_0$. Since $G_0$ and $G'_0$ are isomorphic and their corresponding edges are $\Theta$-related by Observation \ref{obs:FInduceIsomorphism}, two $\Theta$-classes $\theta_u$ and $\theta_w$ ($u,w\neq v$) cross in $G_0$ if and only if they cross in $G'_0$, so do in $G_1$. Thus, $u,w$ are adjacent in $G^{\#}$ if and only if they are adjacent in $G_1^{\#}$. Then $G_1^{\#}=G^{\#}-v$. Let's consider $G_0^{\#}$. By the definition of crossing, $\theta_u$ occurs in $G_0$ if and only if it crosses $\theta_v$ in $G$. Thus $V(G_0^{\#})=N_{G^{\#}}(v)$. Since $G_0$ is a convex subgraph of $G$ by Proposition \ref{pro:WisConvex}, we obtain $G_0^{\#}=(G^{\#})_v$ by Lemma \ref{lem:ConvexandInducedSubgraph}.

By (\ref{eq:RecursiveFormulaofCliqueP}), we obtain
\begin{equation*}
Cl(G_1^{\#},x+1)+(x+1)Cl(G_0^{\#},x+1)=Cl(G^{\#}-v,x+1)+(x+1)Cl((G^{\#})_v,x+1)=Cl(G^{\#},x+1).
\end{equation*}
Thus, the induction step completes the proof.
\end{proof}
\subsection{$C(G,x)<Cl(G^{\#},x+1)$ if $G$ is not a median graph}
Let $G$ be a graph. For any two vertices $u,v\in V(G)$, the {\em interval} $I_G(u,v)$ between $u$ and $v$ is a subset of $V(G)$ which is defined as follows: $I_G(u,v):=\{w\in V(G)|w\mbox{ is on a geodesic between }u$ $\mbox{and }v\}$. Let $X$ be a set. The power set $\mathcal{P}(X)$ is the set of subsets of $X$, i.e., $\mathcal{P}(X):=\{Y|Y\subseteq X\}$. Let $S$ be a subset of $V(G)$. Let $\ell_G$ be the self-map of $\mathcal{P}(V(G))$ defined by $\ell_G(S):=\bigcup\limits_{u,v\in S}I_G(u,v)$. Let's denote $\ell^1_G(S):=\ell_G(S)$ and $\ell^i_G(S)=\ell(\ell^{i-1}_G(S))$ for each integer $i\geqslant 2$. The {\em convex hull} of $S$ in $G$ is defined as $co_G(S)=\bigcup\limits_{i\in\mathbb{N}}\ell^i_G(S)$. We can see that $G[co_G(S)]$ is the smallest convex subgraph of $G$ containing $S$. In particular, if $H$ is a subgraph of $G$, we denote  $co_G(H):=G[co_G(V(H))]$ and call it the {\em convex hull of the subgraph $H$} in $G$. Another characterization of median graphs which was proved in an unpublished manuscript due to Bandelt in 1982, and can be found in \cite{km99}, reads as follows:

\begin{thm}{\em\cite{km99}}\label{thm:ConvexHullofIsometricCycle}
Let $G$ be a connected graph. $G$ is a median graph if and only if the convex hull of any isometric cycle of $G$ is a hypercube.
\end{thm}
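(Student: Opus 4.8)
The plan is to prove the two implications separately, using throughout the observation that an isometric cycle $C$ of length $2k$ carries exactly $k$ distinct $\Theta$-classes (opposite edges of $C$ are $\Theta$-related while adjacent ones are not), and that by Proposition \ref{pro:CrossinIsometricCycle} these $k$ classes pairwise cross. Writing $H:=co_G(C)$, the subgraph $H$ is convex and hence, when $G$ is median, itself a median graph; the whole problem is to identify $H$.

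\textbf{Necessity (median $\Rightarrow$ hull is a cube).} First I would show that $H$ has exactly the $k$ classes $\theta_1,\dots,\theta_k$ of $C$. Embedding $G$ isometrically in a hypercube with coordinates indexed by $\Theta$-classes, all vertices of $C$ agree outside $\{\theta_1,\dots,\theta_k\}$, and since a geodesic of $G$ flips only the coordinates in which its endpoints differ, every set $\ell^i_G(V(C))$, and hence $co_G(C)$, stays inside that coordinate block, so no new class appears. By Lemma \ref{lem:CrossinConvexSubgraph} the $\theta_i$ pairwise cross inside the convex subgraph $H$, i.e. $H^{\#}\cong K_k$. It then remains to establish the clean statement: \emph{a median graph whose crossing graph is complete on $k$ vertices is $Q_k$.} I would prove this by induction on $k$. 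Fixing a class $\theta_k=F_{ab}$, the semicubes $G[W_{ab}]$ and $G[W_{ba}]$ are convex (Proposition \ref{pro:WisConvex}), hence median, and each contains all remaining classes, because every $\theta_i$ crosses $\theta_k$ and so occurs on both sides of $\theta_k$; these again pairwise cross there by Lemma \ref{lem:CrossinConvexSubgraph}, so by induction each semicube is a $Q_{k-1}$. Finally, $G[U_{ab}]$ is convex (Proposition \ref{pro:UisConvex}), hence a subcube of $Q_{k-1}$ by Observation \ref{obs:ConvexSubgraphofHypercube}, and it must contain every class $\theta_j$, since otherwise Lemma \ref{lem:Crossin4Cycle} shows $\theta_j$ could not cross $\theta_k$; using all $k-1$ directions it equals the whole $Q_{k-1}$, so $U_{ab}=W_{ab}$. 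The expansion along $\theta_k$ is therefore peripheral on both sides, giving $H\cong Q_k$.

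\textbf{Sufficiency (hull condition $\Rightarrow$ median).} Here I would proceed in three steps. First, $G$ is bipartite: a shortest odd cycle is isometric, but its convex hull is a hypercube and hypercubes are bipartite, a contradiction. Second, $G$ is a partial cube; by Winkler's characterization this amounts to the transitivity of $\Theta$, which I would attack through the convexity of the semicubes $W_{uv}$. If some $W_{uv}$ were not convex, a shortest geodesic leaving $W_{uv}$ would cross the cut at $uv$ twice and close up into a shortest, hence isometric, even cycle on which the $uv$-direction is met by two non-opposite edges; such a cycle cannot be placed inside any hypercube, so neither can its convex hull, contradicting the hypothesis. Third, with $G$ a partial cube, I would verify the median condition via Proposition \ref{pro:UisConvex}: were some $G[U_{uv}]$ not convex, Proposition \ref{pro:ConvexityLemma} yields an edge of $\partial\,(G[U_{uv}])$ that is $\Theta$-related to an edge inside, and tracing this out produces an isometric $6$-cycle whose three classes pairwise cross; its convex hull, being a cube by hypothesis, must be $Q_3$ rather than the ``$3$-cube minus a corner'', and the extra corner is precisely the median forced to exist. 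Hence every $G[U_{uv}]$ is convex and $G$ is a median graph.

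\textbf{Main obstacle.} The necessity direction is essentially bookkeeping once the induction above is in place. The genuine difficulty lies in sufficiency, and concretely in the second step: deducing from a purely cyclic hypothesis about \emph{isometric} cycles that $\Theta$ is transitive. General geodesics and non-isometric cycles must first be reduced to isometric ones before the hull condition can be brought to bear, and I expect the delicate point to be showing that any failure of semicube-convexity can be localized to a shortest even cycle on which a single direction is crossed twice in non-opposite positions. After that reduction, the final median step is only the short $Q_3^-$-versus-$Q_3$ argument sketched above.
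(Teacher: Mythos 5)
First, a point of comparison: the paper does not prove this theorem at all --- it is quoted from \cite{km99}, where it is attributed to an unpublished 1982 manuscript of Bandelt --- so your attempt must be judged on its own merits rather than against a proof in the text. Your necessity direction is essentially correct and complete in outline: the $k$ classes of an isometric $2k$-cycle $C$ pairwise cross (Proposition \ref{pro:CrossinIsometricCycle}), the hull $H=co_G(C)$ is convex, hence a median graph carrying exactly those $k$ classes (Lemma \ref{lem:ConvexHullProperty}), these classes pairwise cross inside $H$ (Lemma \ref{lem:CrossinConvexSubgraph}), and your induction showing that a median graph with complete crossing graph $K_k$ is $Q_k$ is sound: each semicube is convex, median, and again has a complete crossing graph, while $G[U_{ab}]$ is a convex subcube of $W_{ab}\cong Q_{k-1}$ (Proposition \ref{pro:UisConvex}, Observation \ref{obs:ConvexSubgraphofHypercube}) that meets all $k-1$ directions by Lemma \ref{lem:Crossin4Cycle}, forcing $U_{ab}=W_{ab}$ and hence a hypercube.

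The sufficiency direction, however, has a genuine gap, and it sits exactly where you flag the difficulty. There are two distinct problems. (a) ``Shortest, hence isometric'' is unjustified: only a globally shortest cycle (or a shortest odd cycle) is automatically isometric; a cycle that is shortest subject to crossing the cut $\partial W_{uv}$ twice in non-opposite positions need not be isometric, and manufacturing an isometric witness is precisely the hard content of Bandelt's theorem. (b) More damagingly, even granted such an isometric cycle $C$, no contradiction follows. The hypothesis makes $co_G(C)$ a hypercube, and inside that hypercube the two cut edges $e,f$ may simply lie in two different $\Theta$-classes; to rule this out you need $e\,\Theta\,f$, but all you know is $e\,\Theta\,uv$ and $f\,\Theta\,uv$, and transitivity of $\Theta$ is the very thing being proved ($u$ and $v$ need not even belong to the hull, so nothing about them is visible inside it). Concretely: let $G$ be obtained from $Q_3$ by adding one new vertex $v$ joined to two antipodal vertices of a 4-cycle face $F$, and let $u$ be one of those two vertices. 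Then $G$ is bipartite, $W_{uv}$ is not convex, and the face of $Q_3$ opposite to $F$ is a \emph{convex} 4-cycle of $G$ --- so its hull is $Q_2$, a hypercube --- yet it contains two adjacent edges of the cut $\partial W_{uv}$, each $\Theta$-related to $uv$ but not to each other. So the local deduction of your Step 2 is simply false; the theorem survives only because some \emph{other} isometric cycle (here one through $v$, whose hull is a $K_{2,3}$) violates the hull condition, and producing that cycle from a convexity failure is a global argument you have not supplied. Step 3 has the same character of gap: $G[U_{uv}]$ need not be connected, so Proposition \ref{pro:ConvexityLemma} does not apply to it, and ``tracing this out produces an isometric 6-cycle'' is only clear when convexity of $U_{uv}$ fails at distance 2 (two vertices of $U_{uv}$ with a common neighbor outside it); reducing an arbitrary failure to that local one is again the missing, nontrivial step.
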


Let $G$ be a graph. Let $\mathcal{C}(G)$ be the set of all isometric cycles of $G$. We define a binary relation `$\leqslant_{\mathcal{C}(G)}$' on $\mathcal{C}(G)$ as follows: For any $C,C'\in\mathcal{C}(G)$, $C\leqslant_{\mathcal{C}(G)}C'\iff co_G(V(C))\subseteq co_G(V(C'))$. It is easy to see that `$\leqslant_{\mathcal{C}(G)}$' is a partial order on $\mathcal{C}(G)$. We say that $C\in\mathcal{C}(G)$ is {\em maximal} if it is a maximal element in the partial order  `$\leqslant_{\mathcal{C}(G)}$'. The set of maximal isometric cycles in $G$ is denoted by $\mathcal{C}_{\max}(G)$.

Let $G$ be a partial cube with $\mathrm{idim}(G)=n$. Now, we construct $G^{+}$ based on $G$. In what follows, $G$ is considered as an isometric subgraph of $Q_n$. Denote $G^{(0)}:=G$ and $S_0:=V(G)$, and further, for $i\geqslant 0$,
\begin{equation}\label{eq:Si}
S_{i+1}:=S_i\cup\bigcup_{C\in\mathcal{C}_{\max}(G^{(i)})}co_{Q_n}(V(C))
\end{equation}
and
\begin{equation}\label{eq:Gi}
G^{(i+1)}:=Q_n[S_{i+1}]
\end{equation}
recursively. Since $G^{(i)}$ is the subgraph of $Q_n$ for each $i\geqslant 0$, every subgraph of $G^{(i)}$ is also the subgraph of $Q_n$. Thus, every $co_{Q_n}(V(C))$ in (\ref{eq:Si}) is well-defined, so are $S_{i+1}$ and $G^{(i+1)}$. Since $Q_n$ is finite, there must exist a smallest integer $l\geqslant 0$ that all $S_i$'s (resp. $G^{(i)}$'s) are equal for $i\geqslant l$.  We define $G^{+}:=G^{(l)}$. 
\begin{figure}[!htbp]
\centering
\setlength{\unitlength}{1mm}
\begin{picture}(143,71)
\put(0,10){\scalebox{0.5}[0.5]{\includegraphics{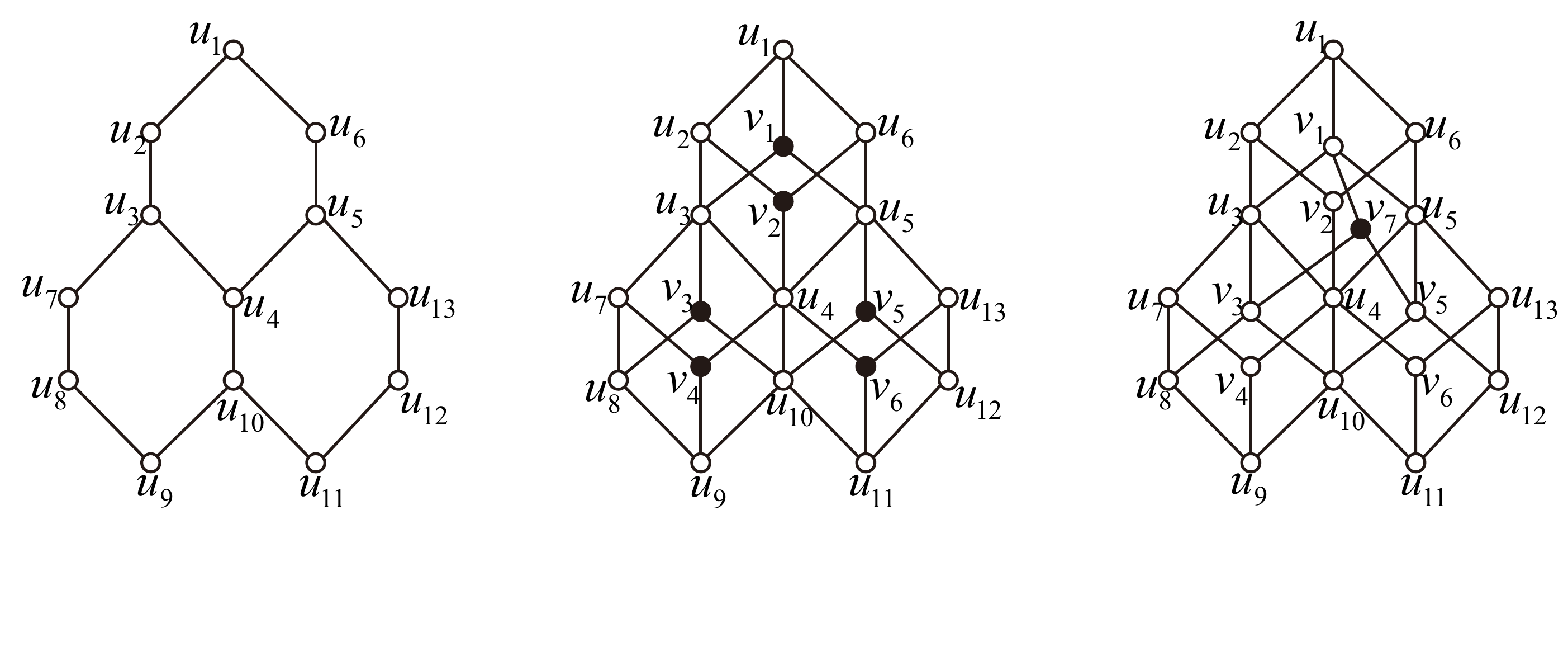}}}
\put(13,12){$G^{(0)}=G$}
\put(19,2){(1)}
\put(50,12){$\begin{array}{l}
G^{(1)}=Q_6[V(G^{(0)})\\
 \cup\{v_1,v_2,v_3,v_4,v_5,v_6\}]
\end{array}$}
\put(68,2){(2)}
\put(100,12){$\begin{array}{l}
G^+=G^{(2)}\\
=Q_6[V(G^{(1)})\cup\{v_7\}]
\end{array} $}
\put(118,2){(3)}
\end{picture}
\caption{An example of the construction of $G^{+}$}{\label{fig:ConstructionG+}}
\end{figure}

\begin{exa}\label{exa:ConstructionG+}
Let $G=G^{(0)}$ be the partial cube illustrated in Fig. \ref{fig:ConstructionG+} (1), which can be considered as an isometric subgraph of $Q_6$. $\mathcal{C}_{\max}(G^{(0)})=\{C_1,C_2,C_3\}$, where $C_1=u_1u_2u_3u_4u_5u_6u_1$, $C_2=u_3u_7u_8u_9u_{10}u_4u_3$ and $C_3=u_5u_4u_{10}u_{11}u_{12}u_{13}u_5$. Since $C_1,C_2,C_3$ are 6-cycles, $co_{Q_6}(C_i)$ ($i=1,2,3$) are isomorphic to $Q_3$. Assume $co_{Q_6}(V(C_1))=V(C_1)\cup\{v_1,v_2\}$, $co_{Q_6}(V(C_2))=V(C_2)\cup\{v_3,v_4\}$ and $co_{Q_6}(V(C_3))=V(C_3)\cup\{v_5,v_6\}$, then, by the definition, $G^{(1)}=Q_6[V(G^{(0)})\cup\{v_1,v_2,v_3,v_4,v_5,v_6\}]$ (see Fig. \ref{fig:ConstructionG+} (2)). For $G^{(1)}$, $\mathcal{C}_{\max}(G^{(1)})=\{C_1,C_2,C_3,C_4\}$, where $C_4=v_1u_3v_3u_{10}v_5u_5v_1$. $co_{Q_6}(C_4)$ is also isomorphic to $Q_3$ and $u_4\in co_{Q_6}(V(C_4))$. Assume $co_{Q_6}(V(C_4))$ $=V(C_4)\cup\{u_4,v_7\}$. Then $G^{(2)}=Q_6[V(G^{(1)})\cup\{v_7\}]$ (see Fig. \ref{fig:ConstructionG+} (3)). We can see that $\mathcal{C}_{\max}(G^{(2)})=\{C_1,C_2,C_3,C_4\}$. It deduces that $G^{(i)}=G^{(2)}$ for all $i\geqslant 3$, i.e., $G^+=G^{(2)}$.
\end{exa}

About $G^{+}$, we have

\begin{lem}\label{lem:G+isMedianGraph}
For any partial cube $G$, $G^{+}$ is a median graph.
\end{lem}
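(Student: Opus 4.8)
The plan is to verify the characterization of median graphs given in Theorem \ref{thm:ConvexHullofIsometricCycle}: I will show that $G^{+}$ is connected and that the convex hull in $G^{+}$ of every isometric cycle of $G^{+}$ is a hypercube. Connectedness is immediate by induction on the construction, since $G^{(0)}=G$ is connected and each subcube $co_{Q_n}(V(C))$ adjoined in passing from $G^{(i)}$ to $G^{(i+1)}$ shares the vertices of its generating cycle $C$ with $G^{(i)}$; hence $G^{+}=G^{(l)}$ is connected. The substance lies in controlling the isometric cycles of $G^{+}$, and for this the natural first goal is to prove that $G^{+}$ is itself a partial cube, i.e. an isometric subgraph of $Q_n$.

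To establish isometry I would argue by induction on $i$ that each $G^{(i)}$ is isometric in $Q_n$, the base case $G^{(0)}=G$ being given. For the inductive step it suffices to show that enlarging an isometric subgraph $H$ by the subcubes spanned by its maximal isometric cycles again yields an isometric subgraph. Fix one such cycle $C$; since $H$ is isometric, $C$ is isometric in $Q_n$, so $B:=co_{Q_n}(V(C))$ is a genuine subcube of $Q_n$ (a convex set of $Q_n$ is a subcube by Observation \ref{obs:ConvexSubgraphofHypercube}), and as a convex subgraph of the median graph $Q_n$ it is gated. For vertices $x,y$ with $x,y\in V(H)$ or $x,y\in V(B)$ the required geodesic already lies in $V(H)$ (by isometry) or in $V(B)$ (by convexity), so the only case to treat is $x\in V(H)$, $y\in V(B)$. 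Here I would route a $Q_n$-geodesic from $x$ to $y$ through the gate $g$ of $x$ in $B$, i.e. the unique vertex of $B$ closest to $x$, which satisfies $d(x,b)=d(x,g)+d(g,b)$ for every $b\in B$: the portion from $g$ to $y$ lies in $B$, and because all of $B$ (in particular $g$) is now present, the task reduces to routing a geodesic from $x$ to $g$ inside $V(H)\cup V(B)$. The main obstacle is exactly this last routing: the intermediate vertices of an $x,g$-geodesic that have not yet entered $B$ must be supplied by $H$, and guaranteeing their availability is where the maximality of $C$, the simultaneous addition of all maximal cubes, and the iteration to a fixed point are needed. I expect this isometry step to be the technical heart of the lemma.

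Granting that $G^{+}$ is a partial cube, the rest is clean. Let $C$ be any isometric cycle of $G^{+}$; by isometry it is isometric in $Q_n$ as well (Observation \ref{obs:ThetaRestrictonSubgraph}), so $co_{Q_n}(V(C))$ is a subcube. In the finite poset $(\mathcal{C}(G^{+}),\leqslant_{\mathcal{C}(G^{+})})$ the cycle $C$ lies below some maximal cycle $C'\in\mathcal{C}_{\max}(G^{+})$, and the defining fixed-point property of $G^{+}$ gives $co_{Q_n}(V(C'))\subseteq V(G^{+})$. Thus the subcube $B':=co_{Q_n}(V(C'))$ is contained in $G^{+}$; since $G^{+}$ is isometric, its geodesics are $Q_n$-geodesics, so $B'$ is convex in $G^{+}$ and is a hypercube. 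From $V(C)\subseteq co_{Q_n}(V(C))\subseteq co_{Q_n}(V(C'))=V(B')$ together with the convexity of $B'$ in $G^{+}$ we obtain $co_{G^{+}}(V(C))\subseteq V(B')$; being convex in $G^{+}$ and contained in the convex hypercube $B'$, the set $co_{G^{+}}(V(C))$ is convex in $B'\cong Q_m$ and hence is itself a hypercube by Observation \ref{obs:ConvexSubgraphofHypercube}. Therefore every isometric cycle of $G^{+}$ has a hypercube as its convex hull, and Theorem \ref{thm:ConvexHullofIsometricCycle} yields that $G^{+}$ is a median graph.
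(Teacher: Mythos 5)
Your overall strategy (connectivity plus Bandelt's criterion, Theorem \ref{thm:ConvexHullofIsometricCycle}) is the same as the paper's, but your argument has a genuine gap at exactly the place you flag as ``the technical heart of the lemma'': the claim that each $G^{(i)}$, and hence $G^{+}$, is an isometric subgraph of $Q_n$. You sketch a gate-routing approach and then concede that routing an $x,g$-geodesic inside $V(H)\cup V(B)$ is an unresolved obstacle, to be handled somehow by maximality and the fixed-point iteration. Nothing in the proposal actually supplies those intermediate vertices, so the proof is incomplete precisely at the step on which your final paragraph is conditioned.

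What makes this a wrong turn rather than a mere omission is that the paper's proof shows the global isometry statement is not needed at all. The fixed-point property $S_{l+1}=S_l$ gives that for every \emph{maximal} isometric cycle $C$ of $G^{+}$ the entire subcube $B:=Q_n[co_{Q_n}(V(C))]$ already lies inside $G^{+}$. For $u,v\in V(B)$ one then has the sandwich $d_{Q_n}(u,v)\leqslant d_{G^{+}}(u,v)\leqslant d_{B}(u,v)=d_{Q_n}(u,v)$, so distances agree on $B$; hence every $G^{+}$-geodesic between vertices of $B$ is a $Q_n$-geodesic and stays in $B$, i.e.\ $B$ is convex in $G^{+}$, and $co_{G^{+}}(V(C))$ is a convex subgraph of $B\cong Q_r$, hence a hypercube by Observation \ref{obs:ConvexSubgraphofHypercube}. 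A non-maximal isometric cycle lies below some maximal one in the finite poset $\leqslant_{\mathcal{C}(G^{+})}$, so its hull is a convex subgraph of that hypercube and is again a hypercube. This purely \emph{local} argument---distances are controlled only inside a fully-present subcube, never globally---is what replaces your unproven claim; indeed, if you delete the isometry invocations from your last paragraph and substitute the sandwich inequality, you essentially recover the paper's proof. So either prove the isometry of $G^{+}$ in $Q_n$ (a substantial result in its own right, in the spirit of median-closure theorems, and strictly stronger than what the lemma requires), or drop it and argue locally as above.
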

\begin{proof}
First, we prove that $G^{+}$ is connected. In order to do this, we prove that $G^{(i)}$ is connected by induction on $i$. Since $G^{(0)}:=G$ is a partial cube, it is connected. Now, assume $G^{(i)}$ ($0\leqslant i\leqslant l-1$) is connected. By the definition of convex hulls, for each $C\in\mathcal{C}_{\max}(G^{(i)})$, $co_{Q_n}(C)$ is connected. In particular, for any $u\in co_{Q_n}(V(C))\setminus V(C)$, $v\in V(C)$, there exists a path between $u$ and $v$. Since $G^{(i)}$ is connected, it is deduced that $G^{(i+1)}$ is connected. Thus, $G^{+}(=G^{(l)})$ is connected by induction.

Next, we prove that the convex hull of any isometric cycle $C$ of $G^{+}(=G^{(l)})$ is a hypercube.

{\bf Case 1.} $C\in\mathcal{C}_{\max}(G^{(l)})$.

Since $S_{l+1}=S_l$, $co_{Q_n}(V(C))\subseteq S_l$. Then $co_{Q_n}(V(C))=co_{G^{(l)}}(V(C))$. By Observation \ref{obs:ConvexSubgraphofHypercube}, it induces a hypercube.

{\bf Case 2.} $C$ is not maximal.

By definition of `$\leqslant_{\mathcal{C}(G^{(l)})}$', there exists a maximal isometric cycle $C'$ such that $co_{G^{(l)}}(V(C))\subset co_{G^{(l)}}(V(C'))$. Similar to Case 1, $co_{G^{(l)}}(C')$ is a hypercube. By the definition of convexity, $co_{G^{(l)}}(C)$ is also a convex subgraph of $co_{G^{(l)}}(C')$.  By Observation \ref{obs:ConvexSubgraphofHypercube}, $co_{G^{(l)}}(C)$ is a hypercube.

In conclusion, the convex hull of every isometric cycle of $G^{+}(=G^{(l)})$ is a hypercube. By Theorem \ref{thm:ConvexHullofIsometricCycle}, $G^{+}$ is a median graph.
\end{proof}

Let $G$ be a partial cube and $H$ a subgraph of $G$. We denote $\mathcal{F}(H):=\{\theta|\theta\mbox{ is a }\Theta\mbox{-class of }G\mbox{ o-}$ $\mbox{ccurring in }H\}$. Now, we give a lemma.

\begin{lem}\label{lem:ConvexHullProperty}
Let $G$ be a partial cube and $H$ a subgraph of $G$. If $H$ is connected, then $\mathcal{F}(H)=\mathcal{F}(co_G(H))$.
\end{lem}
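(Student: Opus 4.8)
The plan is to prove the two inclusions $\mathcal{F}(H)\subseteq\mathcal{F}(co_G(H))$ and $\mathcal{F}(co_G(H))\subseteq\mathcal{F}(H)$ separately. The first is immediate: since $V(H)\subseteq co_G(V(H))$ and $co_G(H)$ is the subgraph of $G$ induced on $co_G(V(H))$, every edge of $H$ is an edge of $co_G(H)$, so every $\Theta$-class occurring in $H$ occurs in $co_G(H)$. All the work is in the reverse inclusion, and this is where the connectivity hypothesis on $H$ will be used.

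The key observation I would record first is a clean criterion for when a $\Theta$-class occurs in a subgraph. Recall that a $\Theta$-class $\theta=F_{ab}$ is exactly the set of edges of $G$ joining $W_{ab}$ and $W_{ba}$, and that $\{W_{ab},W_{ba}\}$ partitions $V(G)$ (the standard edge-cut description of $\Theta$-classes in a partial cube). From this I obtain: a $\Theta$-class $\theta=F_{ab}$ occurs in a subgraph $K$ whenever $V(K)$ meets both $W_{ab}$ and $W_{ba}$, provided $K$ is connected; and, without any connectivity assumption, if $\theta$ occurs in $K$ then $V(K)$ meets both $W_{ab}$ and $W_{ba}$. Indeed, an edge of $\theta$ lying in $K$ has its two endpoints on opposite sides of the cut, while conversely a path in a connected $K$ joining a vertex of $W_{ab}$ to a vertex of $W_{ba}$ must use an edge of the cut $\theta$.

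For the reverse inclusion, let $\theta=F_{ab}\in\mathcal{F}(co_G(H))$. By the no-connectivity direction of the observation above, $co_G(V(H))$ meets both $W_{ab}$ and $W_{ba}$. I would then argue by contradiction that $V(H)$ itself already meets both sides: if, say, $V(H)\subseteq W_{ab}$, then since $G[W_{ab}]$ is convex in $G$ by Proposition \ref{pro:WisConvex} and contains $V(H)$, and since $G[co_G(V(H))]$ is the smallest convex subgraph of $G$ containing $V(H)$, we would get $co_G(V(H))\subseteq W_{ab}$, contradicting that $co_G(V(H))$ meets $W_{ba}$. Hence $V(H)$ meets both $W_{ab}$ and $W_{ba}$, and now the connectivity of $H$ together with the connected direction of the observation yields $\theta\in\mathcal{F}(H)$, as desired.

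The main obstacle is conceptual rather than computational: one must reformulate ``$\theta$ occurs in $H$'' as the purely set-theoretic condition ``$V(H)$ meets both sides of the cut $\theta$'', after which the convexity of the halfspaces $W_{ab}$ supplied by Proposition \ref{pro:WisConvex} does the heavy lifting, because a convex hull cannot escape a convex halfspace that already contains the generating set. The only point requiring a little care is justifying that a $\Theta$-class coincides with the edge cut between $W_{ab}$ and $W_{ba}$; this is a standard property of partial cubes stemming from Winkler's characterization and the definition of $W_{ab}$, which I would either cite or derive quickly from the bipartite form of the Djokovi\'c--Winkler relation.
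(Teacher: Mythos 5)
Your proof is correct, and its engine is the same as the paper's: both arguments rest on the convexity of halfspaces (Proposition~\ref{pro:WisConvex}), the minimality of the convex hull, and the fact that a connected subgraph avoiding a $\Theta$-class must lie entirely on one side of its cut. The genuine difference is where the halfspaces live. The paper sets $H':=co_G(H)$, notes that $H'$ is itself a partial cube (convex subgraphs are isometric, so Observation~\ref{obs:ThetaRestrictonSubgraph} applies), picks $F_{uv}\in\mathcal{F}(H')\setminus\mathcal{F}(H)$, and observes that $H$ must lie in one halfspace $H'[W^{H'}_{uv}]$ of $H'$; that halfspace is convex in $H'$, hence convex in $G$ by transitivity of convexity, which contradicts the minimality of $H'$ as a convex subgraph containing $H$. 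You instead stay in $G$ throughout: using the identification of $F_{ab}$ with the edge cut between $W_{ab}$ and $W_{ba}$ in $G$, you deduce that $co_G(V(H))$ meets both sides, rule out $V(H)\subseteq W_{ab}$ by playing the minimality of the hull against the convex halfspace $G[W_{ab}]$, and then let the connectivity of $H$ produce a cut edge of $F_{ab}$ inside $H$. Your route is slightly more elementary --- it never needs to regard $co_G(H)$ as a partial cube with its own $\Theta$-classes, nor the transitivity-of-convexity step --- at the price of explicitly invoking the standard cut description of $\Theta$-classes, which you correctly flag and which the paper uses too, only implicitly: its assertion that $H$ is contained in $H'[W^{H'}_{uv}]$ or $H'[W^{H'}_{vu}]$ is exactly the same cut property, applied inside $H'$.
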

\begin{proof}
Since $H$ is the subgraph of $co_G(H)$, it is obvious that $\mathcal{F}(H)\subseteq\mathcal{F}(co_G(H))$. By contradiction assume $\mathcal{F}(H)\subset\mathcal{F}(co_G(H))$. For convenience, we denote $H':=co_G(H)$. Since $H'$ is an isometric subgraph of $G$, $H'$ is a partial cube by Observation \ref{obs:ThetaRestrictonSubgraph}. Assume $F_{uv}\in\mathcal{F}(H')\setminus\mathcal{F}(H)$. Since $H$ is connected, $H$ is a subgraph of $H'[W_{uv}^{H'}]$ or $H'[W_{vu}^{H'}]$. W.l.o.g., assume $H$ is the subgraph of $H'[W_{uv}^{H'}]$. Combined with Proposition \ref{pro:WisConvex} and the transitivity of convex subgraphs, $H'[W_{uv}^{H'}]$ is a convex subgraph of $G$, a contradiction with the fact that $H'$ is the smallest convex subgraph containing $H$.

Thus, $\mathcal{F}(H)=\mathcal{F}(co_G(H))$.
\end{proof}

Now, we consider the crossing graphs. Let $G$ be a partial cube. By the construction of $G^{(i)}$, $G^{(i-1)}$ is a subgraph of $G^{(i)}$ for any $1\leqslant i\leqslant l$. Moreover, for $u,v\in V(G^{(i-1)})$, we have
\begin{equation}\label{eq:Isometric}
d_{G^{(i-1)}}(u,v)=d_{G^{(i)}}(u,v).
\end{equation}
Thus, $G^{(i-1)}$ is an isometric subgraph of $G^{(i)}$. By Observation \ref{obs:ThetaRestrictonSubgraph} and Lemma \ref{lem:G+isMedianGraph}, $G^{(1)},G^{(2)},\cdots,$ $G^{(l-1)}$ are partial cubes. Set $\mathrm{idim}(G)=n$. By Lemma \ref{lem:ConvexHullProperty}, there are no new $\Theta$-classes arisen from $G^{(i-1)}$ to $G^{(i)}$ for any $1\leqslant i\leqslant l$. Then, we can deduce that $\mathrm{idim}(G^{(1)})=\mathrm{idim}(G^{(2)})=\cdots=\mathrm{idim}(G^{(l)})=n$. Moreover, for any $ab\in E(G^{(i-1)})$ ($1\leqslant i\leqslant l$), $F_{ab}^{G^{(i-1)}}\subseteq F_{ab}^{G^{(i)}}$ by Observation \ref{obs:ThetaRestrictonSubgraph}. For convenience, we can set that for any $ab\in E(G^{(i-1)})$, the corresponding vertex of $F_{ab}^{G^{(i-1)}}$ in $(G^{(i-1)})^{\#}$ and the one of $F_{ab}^{G^{(i)}}$ in $(G^{(i)})^{\#}$ are same. Thus, 
\begin{equation}\label{eq:VG}
V(G^{\#})=V((G^{(1)})^{\#})=\cdots=V((G^{(l)})^{\#}).
\end{equation}

\begin{lem}\label{lem:EqualCrossingGraph}
For any partial cube $G$ with $G\neq K_1$, $G^{\#}=(G^+)^{\#}$.
\end{lem}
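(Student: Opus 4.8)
The plan is to prove that the two crossing graphs coincide by checking that they have the same vertices and the same edges. The vertex sets already agree by (\ref{eq:VG}), so everything reduces to the crossing relation: for two $\Theta$-classes $\theta_1,\theta_2$ (which are common to all the $G^{(i)}$), I must show that $\theta_1,\theta_2$ cross in $G=G^{(0)}$ if and only if they cross in $G^{+}=G^{(l)}$. Since $G^{(i-1)}$ is an isometric subgraph of $G^{(i)}$ by (\ref{eq:Isometric}) and these partial cubes share their $\Theta$-classes (Lemma \ref{lem:ConvexHullProperty} together with the discussion preceding (\ref{eq:VG})), it suffices to establish the one-step identity $(G^{(i-1)})^{\#}=(G^{(i)})^{\#}$ for each $1\le i\le l$ and then chain the equalities.

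For the forward direction, suppose $\theta_1,\theta_2$ cross in $G^{(i-1)}$. By Proposition \ref{pro:CrossinIsometricCycle} they occur together on an isometric cycle $C$ of $G^{(i-1)}$. As $C$ is isometric in $G^{(i-1)}$ and $G^{(i-1)}$ is isometric in $G^{(i)}$, transitivity of isometric embeddings (Observation \ref{obs:ThetaRestrictonSubgraph}) shows $C$ is isometric in $G^{(i)}$; hence $\theta_1,\theta_2$ still lie on a common isometric cycle and cross in $G^{(i)}$. Chaining this over $i$ already gives $E(G^{\#})\subseteq E((G^{+})^{\#})$.

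The reverse direction is the crux: I must show that passing from $G^{(i-1)}$ to $G^{(i)}$ creates no new crossings. The only new material consists of the hulls $H_j:=co_{Q_n}(V(C_j))$ of the maximal isometric cycles $C_j\in\mathcal{C}_{\max}(G^{(i-1)})$, and by Lemma \ref{lem:ConvexHullProperty} each satisfies $\mathcal{F}(H_j)=\mathcal{F}(C_j)$, so no hull contributes a $\Theta$-class absent from $C_j$. Two structural facts drive the argument. First, all classes occurring on a single maximal isometric cycle $C_j$ already pairwise cross in $G^{(i-1)}$, since they all occur on the isometric cycle $C_j$ (Proposition \ref{pro:CrossinIsometricCycle}); thus any crossing whose two classes both lie in some $\mathcal{F}(C_j)$ is already present in $G^{(i-1)}$. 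Second, if $\theta\notin\mathcal{F}(C_j)$ then the $\theta$-coordinate is constant on the subcube $H_j$, so the connected graph $H_j$ lies entirely on one side of the cut determined by $\theta$, on the same side as $C_j\subseteq G^{(i-1)}$. I would therefore assume $\theta_1,\theta_2$ cross in $G^{(i)}$ but not in $G^{(i-1)}$, phrase the crossing through the halfspace description (with $\theta_2=F_{uv}$, the class $\theta_1$ occurs in both $G^{(i)}[W_{uv}]$ and $G^{(i)}[W_{vu}]$), and trace a $\theta_1$-edge lying on the side of $\theta_2$ that $\theta_1$ avoids in $G^{(i-1)}$ back to the hull supplying its new endpoint, using the two facts above to derive a contradiction.

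The hard part will be the final step of the reverse direction: an isometric cycle of $G^{(i)}$ may thread through several distinct added hulls, exactly as the cycle $C_4$ does in Example \ref{exa:ConstructionG+}, so a new crossing need not be confined to a single $\mathcal{F}(C_j)$. The delicate point is to rule out a new $\theta_1$-edge that \emph{leaves} a hull $H_j$ with $\theta_1\notin\mathcal{F}(C_j)$ and lands, on the far side of $\theta_2$, on a vertex of $G^{(i-1)}$ or of another hull; excluding this seems to require the full strength of the iterative construction — the hull-closure defining $S_i$ in (\ref{eq:Si}) together with the maximality of the cycles $C_j$ — rather than any purely local estimate, and this is where I expect the main effort to lie. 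Once it is settled, the two inclusions give $(G^{(i-1)})^{\#}=(G^{(i)})^{\#}$, and iterating over $i=1,\dots,l$ yields $G^{\#}=(G^{+})^{\#}$.
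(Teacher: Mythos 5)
Your vertex-set identification and the forward inclusion $E(G^{\#})\subseteq E((G^{+})^{\#})$ are correct and match the paper's argument (Claim 1 there) exactly. The reverse inclusion, however, is not actually proven: you assemble the right ingredients — the minimal index $i$ at which a new crossing would first appear, Lemma \ref{lem:ConvexHullProperty}, and your two ``structural facts'' — but then explicitly defer the decisive step, ending with the admission that the main effort still lies ahead. That deferred step is precisely the content of the lemma, so as it stands the proposal is an outline with the crux missing, not a proof.

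Concretely, what is missing is the localization-and-assembly argument the paper carries out. Suppose $\theta_1=F_{ab}$ and $\theta_2=F_{cd}$ cross in $G^{(i)}$ but not in $G^{(i-1)}$, with (w.l.o.g.) no edge of $F_{ab}^{G^{(i-1)}}$ in $G^{(i-1)}[W_{cd}^{G^{(i-1)}}]$. Any $\theta_1$-edge of $G^{(i)}$ inside $W_{cd}^{G^{(i)}}$ is then a new edge, and the paper places it inside $co_{Q_n}(V(C))$ for some $C\in\mathcal{C}_{\max}(G^{(i-1)})$. From there your two facts finish everything: by Lemma \ref{lem:ConvexHullProperty}, $\theta_1\in\mathcal{F}(C)$, so $C$ carries an old $\theta_1$-edge and hence meets $W_{dc}^{G^{(i-1)}}$; since the hull meets $W_{cd}^{G^{(i)}}$ and its classes are exactly $\mathcal{F}(C)$, your fact 2 forces $V(C)\cap W_{cd}^{G^{(i-1)}}\neq\emptyset$ (otherwise the hull would lie entirely in $W_{dc}$); connectivity of $C$ then yields a $\theta_2$-edge on $C$, so $\theta_1$ and $\theta_2$ occur on a common isometric cycle of $G^{(i-1)}$ and cross there by Proposition \ref{pro:CrossinIsometricCycle} (your fact 1), contradicting the choice of $i$. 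You had every one of these pieces but never assembled them, and in particular you never resolve the straddling-edge issue you raise (a new $\theta_1$-edge with one endpoint in a hull $H_j$ for which $\theta_1\notin\mathcal{F}(C_j)$). To be fair, that worry points at the one place where the paper itself is terse: it asserts, rather than argues, that some hull contains a $\theta_1$-edge inside $W_{cd}^{G^{(i)}}$ (when $\theta_1\in\mathcal{F}(C_j)$ this is immediate, since $\theta_1$ is then a free coordinate of the subcube $H_j$ and both endpoints lie in $H_j$). But the paper completes the chain of deductions from that point, whereas your proposal stops exactly where the proof has to be done.
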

\begin{proof}
We have known that $V(G^{\#})=V((G^+)^{\#})$ by (\ref{eq:VG}). Let $u,v$ be two vertices in $G^{\#}$, $ab,cd$ two edges in $E(G)$ such that $F_{ab}^G$ and $F_{cd}^G$ are the $\Theta$-classes of $G$ corresponding to $u$ and $v$ respectively.  Then $u$ (resp. $v$) is also the corresponding vertex of $F_{ab}^{G^{(i)}}$ (resp. $F_{cd}^{G^{(i)}}$) in $(G^{(i)})^{\#}$ for any $1\leqslant i\leqslant l$ by (\ref{eq:VG}). Now, we prove that

{\bf Claim 1.} $uv\in E(G^{\#})\Longrightarrow uv\in E((G^+)^{\#})$.

By Proposition \ref{pro:CrossinIsometricCycle}, $F_{ab}^G$, $F_{cd}^G$ occur on an isometric cycle $C$ in $G$. Combined with the fact that $G$ is an isometric subgraph of $G^+$ and Observation \ref{obs:ThetaRestrictonSubgraph}, $C$ is also an isometric cycle in $G^+$. Moreover, $F_{ab}^{G^+}$, $F_{cd}^{G^+}$ occur on $C$ in $G^+$. Thus, $uv\in E((G^+)^{\#})$ by Proposition \ref{pro:CrossinIsometricCycle}.

Then, we prove that

{\bf Claim 2.} $uv\not\in E(G^{\#})\Longrightarrow uv\not\in E((G^+)^{\#})$.

By contradiction assume $F_{ab}^{G^{+}}$ and $F_{cd}^{G^{+}}$ cross in $G^{+}$. By the construction of $G^+$, there exists an integer $i$ ($1\leqslant i\leqslant l$) such that $F_{ab}^{G^{(i)}}$ and $F_{cd}^{G^{(i)}}$ cross in $G^{(i)}$ but $F_{ab}^{G^{(i-1)}}$ and $F_{cd}^{G^{(i-1)}}$ don't cross in $G^{(i-1)}$. By the definition of crossing, w.l.o.g., assume $F_{ab}^{G^{(i-1)}}$ does not occur in $G^{(i-1)}[W_{cd}^{G^{(i-1)}}]$. However, since $F_{ab}^{G^{(i)}}$ and $F_{cd}^{G^{(i)}}$ cross in $G^{(i)}$, $F_{ab}^{G^{(i)}}$ occurs in $G^{(i)}[W_{cd}^{G^{(i)}}]$. Then there exists a maximal isometric cycle $C\in\mathcal{C}_{\max}(G^{(i-1)})$ such that $F_{ab}^{G^{(i)}}$ occurs in $co_{Q_n}(C)\cap G^{(i)}[W_{cd}^{G^{(i)}}]$. Since $F_{ab}^{G^{(i)}}\cap E(co_{Q_n}(C))\neq\emptyset$, by Lemma \ref{lem:ConvexHullProperty}, $F_{ab}^{G^{(i)}}$ occurs on $C$, so does $F_{ab}^{G^{(i-1)}}$.

Considering that $co_{Q_n}(V(C))\cap W_{cd}^{G^{(i)}}\neq\emptyset$, it can be deduced that
\begin{equation}\label{eq:CcapWcd}
V(C)\cap W_{cd}^{G^{(i-1)}}\neq\emptyset.
\end{equation}
Since $F_{ab}^{G^{(i-1)}}$ does not occur in $G^{(i-1)}[W_{cd}^{G^{(i-1)}}]$, all edges of $F_{ab}^{G^{(i-1)}}$ are in $E(G^{(i-1)}[W_{dc}^{G^{(i-1)}}])$. Combined with the fact that $F_{ab}^{G^{(i-1)}}$ occurs on $C$, we can obtain that
\begin{equation}\label{eq:CcapWdc}
V(C)\cap W_{dc}^{G^{(i-1)}}\neq\emptyset.
\end{equation}
Since $C$ is connected, $F_{cd}^{G^{(i-1)}}\cap E(C)\neq\emptyset$ by (\ref{eq:CcapWcd}) and (\ref{eq:CcapWdc}). That is,  both $F_{ab}^{G^{(i-1)}}$ and $F_{cd}^{G^{(i-1)}}$ occur on the isometric cycle $C$, a contradiction with Proposition \ref{pro:CrossinIsometricCycle}.

Combined with Claim 1, Claim 2 and (\ref{eq:VG}), we obtain that $G^{\#}=(G^+)^{\#}$.
\end{proof}

Now, we give the main result of this subsection.

\begin{thm}\label{thm:<forPartialCube}
Let $G$ be a partial cube and $G\neq K_1$. If $G$ is not a median graph, then
\begin{equation}\label{eq:<forPartialCube}
C(G,x)<Cl(G^{\#},x+1).
\end{equation}
\end{thm}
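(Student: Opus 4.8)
The plan is to compare $G$ directly with the median graph $G^{+}$ constructed above, leveraging the two facts already established: that $G^{+}$ is a median graph (Lemma \ref{lem:G+isMedianGraph}) and that passing to $G^{+}$ does not change the crossing graph, i.e. $(G^{+})^{\#}=G^{\#}$ (Lemma \ref{lem:EqualCrossingGraph}). Since $G\neq K_1$ and $G$ is a subgraph of $G^{+}$, we have $G^{+}\neq K_1$, so Theorem \ref{thm:=forMedianGraph} applies to $G^{+}$ and yields
\[
C(G^{+},x)=Cl((G^{+})^{\#},x+1)=Cl(G^{\#},x+1).
\]
Hence everything reduces to proving the strict inequality $C(G,x)<C(G^{+},x)$.

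For the non-strict inequality $C(G,x)\leqslant C(G^{+},x)$, I would use that both $G$ and $G^{+}$ are realized as induced subgraphs of one and the same $Q_n$, with nested vertex sets $V(G)=S_0\subseteq S_l=V(G^{+})$. Consequently, for a vertex subset $S\subseteq V(G)$, inducing an $i$-cube in $G$, inducing an $i$-cube in $G^{+}$, and inducing an $i$-cube in $Q_n$ are all equivalent; so every induced hypercube of $G$ is also an induced hypercube of $G^{+}$. This gives $\alpha_i(G)\leqslant\alpha_i(G^{+})$ for every $i$, and in particular $\deg C(G,x)\leqslant\deg C(G^{+},x)$. By the definition of the order on polynomials with nonnegative coefficients, this is precisely $C(G,x)\leqslant C(G^{+},x)$.

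To upgrade to a strict inequality I would invoke the hypothesis that $G$ is not a median graph. Since $G^{+}$ is a median graph whereas $G$ is not, we must have $G\neq G^{+}$; and because both are induced subgraphs of $Q_n$ on nested vertex sets, $G\neq G^{+}$ forces $V(G)\subsetneq V(G^{+})$. Therefore $\alpha_0(G)=|V(G)|<|V(G^{+})|=\alpha_0(G^{+})$, so already the constant coefficients differ and $C(G,x)\neq C(G^{+},x)$. Combined with $C(G,x)\leqslant C(G^{+},x)$, this gives $C(G,x)<C(G^{+},x)=Cl(G^{\#},x+1)$, as desired.

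I expect the genuine work of the argument to reside entirely in the two preparatory lemmas (Lemmas \ref{lem:G+isMedianGraph} and \ref{lem:EqualCrossingGraph}), which let us replace $G$ by its ``median closure'' $G^{+}$ without disturbing the crossing graph; once those are available, the theorem is a short bookkeeping step. The only point requiring a little care is the monotonicity $\alpha_i(G)\leqslant\alpha_i(G^{+})$, which rests on the observation that hypercube-inducedness is inherited in both directions between nested induced subgraphs of $Q_n$, so that enlarging $G$ to $G^{+}$ can only create new induced cubes and never destroy existing ones.
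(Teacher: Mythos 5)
Your proposal is correct and follows essentially the same route as the paper's own proof: reduce to the strict inequality $C(G,x)<C(G^{+},x)$ via Lemmas \ref{lem:G+isMedianGraph} and \ref{lem:EqualCrossingGraph} together with Theorem \ref{thm:=forMedianGraph}, then obtain coefficientwise monotonicity $\alpha_i(G)\leqslant\alpha_i(G^{+})$ from $G$ being an induced subgraph of $G^{+}$, with strictness at $\alpha_0$ because $G\neq G^{+}$. Your added care in justifying why $G\neq G^{+}$ forces $V(G)\subsetneq V(G^{+})$ (both being induced subgraphs of the same $Q_n$ on nested vertex sets) makes explicit a step the paper leaves implicit, but the argument is the same.
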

\begin{proof}
By the construction of $G^+$, $G$ is an induced subgraph of $G^+$, so every induced $i$-cube in $G$ is also an induced $i$-cube in $G^+$ for each $i\geqslant 0$. Thus, $\alpha_i(G)\leqslant\alpha_i(G^+)$. Since $G$ is not a median graph, $G\neq G^{+}$. Then $\alpha_0(G)=|V(G)|<|V(G^+)|=\alpha_0(G^+)$, and further $C(G,x)\neq C(G^+,x)$. Thus,
\begin{equation}\label{eq:CubePolynomialforSubgraph}
C(G,x)<C(G^+,x).
\end{equation}

Since $G^+$ is a median graph, by Theorem \ref{thm:=forMedianGraph},
\begin{equation}\label{eq:MedianGraph}
C(G^+,x)=Cl((G^+)^{\#},x+1).
\end{equation}

By Lemma \ref{lem:EqualCrossingGraph},
\begin{equation}\label{eq:EqualCrossingGraph}
Cl((G^+)^{\#},x+1)=Cl(G^{\#},x+1).
\end{equation}

Combined with (\ref{eq:CubePolynomialforSubgraph}), (\ref{eq:MedianGraph}) and (\ref{eq:EqualCrossingGraph}), we obtain that $C(G,x)<Cl(G^{\#},x+1)$.
\end{proof}

Combined with Theorems \ref{thm:=forMedianGraph} and \ref{thm:<forPartialCube}, Theorem \ref{thm:CubePandCliqueP} is proved.

\section{Disproving Conjecture \ref{con:CubePsareUnimodal}}
Recall that a sequence $(s_1,s_2,\cdots,s_n)$ of nonnegative numbers is {\em unimodal} if
\begin{equation*}
s_1\leqslant s_2\leqslant\cdots\leqslant s_m\geqslant\cdots\geqslant s_{n-1}\geqslant s_n
\end{equation*}
for some integer $1\leqslant m\leqslant n$ and {\em log-concave} if
\begin{equation*}
s_{i-1}s_{i+1}\leqslant s^2_i,\qquad\mbox{for }2\leqslant i\leqslant n-1.
\end{equation*}

The clique polynomial of a graph is not necessarily unimodal, as shown in the following example:

\begin{exa}\label{exa:NonUnimodalCliqueP}
Let $n,m$ be nonnegative integers with $n\geqslant 6$, $G:=K_n\cup mK_1$ the graph of the disjoint union of a complete graph $K_n$ and $m$ single vertices. Then
\begin{equation*}
Cl(G,x)=(x+1)^n+mx.
\end{equation*}
After calculations, we obtain that $Cl(G,x)$ is log-concave when $m\leqslant\left\lfloor\frac{n^2+n}{2n-4}\right\rfloor$; $Cl(G,x)$ is unimodal but not log-concave when $\left\lfloor\frac{n^2+n}{2n-4}\right\rfloor+1\leqslant m\leqslant\frac{n^2-3n}{2}$; $Cl(G,x)$ is not unimodal when $m\geqslant\frac{n^2-3n}{2}+1$.
\end{exa}

Combined with Theorems \ref{thm:=forMedianGraph} and \ref{thm:EveryGraphisaCrossingGraph}, we can construct median graphs with non-unimodal cube polynomials, as shown in the following example:

\begin{exa}\label{exa:NonUnimodalCubeP}
Let $n,m$ be nonnegative integers with $n\geqslant 9$, $G$ the graph formed by $n$-cube $Q_n$ with  $m$ pendant vertices attached. We can obtain that $G$ is a median graph and $G^{\#}\cong K_n\cup mK_1$. Then
\begin{equation*}
C(G,x)=Cl(K_n\cup mK_1,x+1)=(x+2)^n+m(x+1).
\end{equation*}
After calculations, we obtain that $C(G,x)$ is log-concave when $m\leqslant\left\lfloor\frac{n^2+n}{n-2}\cdot 2^{n-2}\right\rfloor$;$C(G,x)$ is unimodal but not log-concave when $\left\lfloor\frac{n^2+n}{n-2}\cdot 2^{n-2}\right\rfloor+1\leqslant m\leqslant\frac{n^2-5n}{2}\cdot 2^{n-2}$; $C(G,x)$ is not unimodal when $m\geqslant\frac{n^2-5n}{2}\cdot 2^{n-2}+1$.
\end{exa}

More generally, let $G$ be any median graph satisfying $\alpha_3(G)>\alpha_2(G)$ (the above $Q_n$ $(n\geqslant 9)$ as examples). Whether $C(G,x)$ is unimodal or not, the cube polynomial of the graph obtained from $G$ by attaching sufficiently many ( $\geqslant \alpha_2(G)-\alpha_1(G)+1$) pendant vertices  is not unimodal.

Thus, Conjecture \ref{con:CubePsareUnimodal} is false.

\section{Conclusion and future work regarding  unimodality}
In the present paper, we obtain an inequality relation between the cube polynomials of partial cubes $G$ and clique polynomials of their crossing graphs, i.e.,  $C(G,x)\leqslant Cl(G^{\#},x+1)$. Moreover, the equality holds if and only if $G$ is a median graph.

A {\em hexagonal system} (or {\em benzenoid system}) is a 2-connected finite plane graph such that every interior face is a regular hexagon of side length one. Let $H$ be a hexagonal system. The Clar covering polynomial (or Zhang-Zhang polynomial) $\zeta(H, x)$ is an important graph polynomial in mathematical chemistry, which was introduced by Zhang and Zhang \cite{zz96}. Zhang et al. \cite{zss13} proved that $\zeta(H,x)=C(R(H),x)$, where $R(H)$ is the resonance graph of $H$ (further a median graph \cite{zls08}).

Although Conjecture \ref{con:CubePsareUnimodal} associated with median graphs is false, the conjecture on the resonance graphs of hexagonal systems (a subclass of median graphs) is still open.

\begin{con}{\em\cite{zz00}}\label{con:ClarPisUnimodal}
For a hexagonal system $H$, $C(R(H),x)$ (i.e., $\zeta(H,x)$) is unimodal.
\end{con}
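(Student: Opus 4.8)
The plan is to use the main theorem of this paper to convert the conjecture into a statement about a single, more tractable polynomial, and then to attack that polynomial with real-rootedness techniques. Since $R(H)$ is a median graph, Theorem~\ref{thm:=forMedianGraph} gives
\[
\zeta(H,x)=C(R(H),x)=Cl\big(R(H)^{\#},x+1\big),
\]
so it suffices to understand $Cl(R(H)^{\#},x+1)$. I would in fact aim for the stronger property that this polynomial is \emph{real-rooted}. The payoff is immediate: a polynomial with nonnegative coefficients that has only real roots has all of them in $(-\infty,0]$, is log-concave by Newton's inequalities, and is therefore unimodal with no internal zeros. Moreover the substitution $x\mapsto x+1$ merely translates the zero set, so $\zeta(H,x)$ is real-rooted if and only if the clique polynomial $Cl(R(H)^{\#},x)$ is; thus it is enough to prove that $Cl(R(H)^{\#},x)$ is real-rooted.

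The second step is to make $R(H)^{\#}$ explicit. The Kekul\'e structures of a hexagonal system carry the structure of a finite distributive lattice, so by Birkhoff's representation $R(H)$ is isomorphic to the cover graph of the lattice $J(P_H)$ of order ideals of a poset $P_H$. For the cover graph of any $J(P)$ one checks that the $\Theta$-classes are in bijection with the elements of $P$, and that two $\Theta$-classes cross precisely when the corresponding elements are incomparable (for comparable elements one of the two toggles is forced, so no $\theta_1,\theta_2$-alternating $4$-cycle arises, whereas an incomparable pair produces one). Hence $R(H)^{\#}$ is the incomparability graph $\mathrm{Inc}(P_H)$, and
\[
Cl(R(H)^{\#},x)=I\big(\mathrm{Comp}(P_H),x\big)=\sum_{k\geqslant 0}a_k(P_H)\,x^k,
\]
where $a_k(P_H)$ is the number of $k$-element antichains of $P_H$. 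The conjecture thus becomes the assertion that the antichain-generating polynomial of $P_H$ is real-rooted. This reformulation is the conceptual heart of the plan: it transfers the problem from the chemistry of $H$ to the order theory of a single poset $P_H$.

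For the final step I would try to prove real-rootedness by induction on the size of $H$ through an interlacing argument in the spirit of the Heilmann--Lieb theorem for matching polynomials \cite{hl72}. Concretely, I would exploit the known edge/hexagon reduction recursions for the Clar covering polynomial, writing $\zeta(H,x)$ as a nonnegative combination of the Clar covering polynomials of smaller hexagonal systems and showing that the summands interlace, so that real-rootedness propagates upward. The order-theoretic shadow of this is a deletion/contraction at a maximal element of $P_H$, which splits $\sum_k a_k(P_H)x^k$ according to whether that element is used in an antichain; the task is to verify an interlacing relation between the two resulting antichain polynomials.

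The main obstacle, and the reason the conjecture remains open, is precisely that antichain polynomials of arbitrary posets are \emph{not} unimodal, so the argument must exploit special features of the posets $P_H$ coming from hexagonal systems. I expect two specific difficulties. First, the obvious sufficient condition ``$\mathrm{Comp}(P_H)$ is claw-free'' (which would let one invoke the claw-free independence-polynomial theorem \cite{h90}) fails in general, because a poset element lying below three pairwise incomparable elements already yields an induced claw; claw-freeness must therefore be replaced by a genuinely $P_H$-specific structural input, presumably easiest to isolate first for catacondensed systems, whose posets appear to be fence-like with Fibonacci-type (hence log-concave) antichain counts. Second, real-rootedness of $\zeta(H,x)$ may simply break down for pericondensed systems, in which case the interlacing induction cannot close and one must retreat to proving log-concavity directly, for instance by constructing an injection between consecutive families of antichains or by a recursive cut of $H$ along a cut-edge or cut-hexagon that demonstrably preserves log-concavity. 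Pinning down the exact structural property of $P_H$ that forces at least log-concavity of its antichain counts is where the real work lies.
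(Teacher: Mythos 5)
You should first note what the paper actually does with this statement: it is Conjecture~\ref{con:ClarPisUnimodal}, attributed to \cite{zz00}, and the paper explicitly says it ``is still open.'' The paper contains no proof of it, so there is no argument of the authors to compare yours against; what you have written is a research programme, and it must be judged as such. To your credit, the reduction steps you do carry out are sound and in the spirit of the paper's Section~5 remarks: since $R(H)$ is a median graph \cite{zls08}, Theorem~\ref{thm:=forMedianGraph} indeed yields $\zeta(H,x)=Cl(R(H)^{\#},x+1)$; the resonance graph is the cover graph of a distributive lattice $J(P_H)$, the $\Theta$-classes biject with the elements of $P_H$, and the alternating-$4$-cycle criterion of Lemma~\ref{lem:Crossin4Cycle} does show that two classes cross exactly when the corresponding elements $p,q$ are incomparable (for incomparable $p,q$ the ideal $I=(\downarrow p\cup\downarrow q)\setminus\{p,q\}$ supports an alternating $4$-cycle, while comparability forbids one). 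Hence $Cl(R(H)^{\#},x)$ is the antichain-generating polynomial of $P_H$, which is a correct and genuinely useful reformulation of the conjecture.

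The gap is that the entire burden of proof rests on the final step, which you do not execute and which, as stated, is almost certainly unworkable. Real-rootedness of $\zeta(H,x)$ is strictly stronger than log-concavity, i.e., than Conjecture~\ref{con:ClarPisLogConcave} of \cite{lpw20}, which is itself open; and the very point of \cite{lpw20} is to exhibit \emph{particular} families of hexagonal systems whose Clar covering polynomials have only real zeros, which would be vacuous if real-rootedness held in general. So the Heilmann--Lieb-style interlacing induction cannot close for all $H$, as you yourself concede, and your fallback positions (a claw-free argument via \cite{h90}, which you correctly observe fails since $\mathrm{Comp}(P_H)$ contains claws whenever an element lies below three pairwise incomparable ones; an injection between consecutive antichain families; a cut-edge or cut-hexagon recursion preserving log-concavity) are all left as unverified tasks. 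In short: the translation from $C(R(H),x)$ to antichain polynomials of $P_H$ is correct and compatible with everything in this paper, but no step of your text establishes unimodality, and the statement remains exactly as open after your argument as before it.
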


Further, since all coefficients of $\zeta(H,x)$ are positive, a stronger conjecture was proposed by Li et al. \cite{lpw20} after checking large amounts of numerical results as follows.

\begin{con}{\em\cite{lpw20}}\label{con:ClarPisLogConcave}
For a hexagonal system $H$, the Clar covering polynomial $\zeta(H,x)$ is log-concave.
\end{con}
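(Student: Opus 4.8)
The plan is to route the statement through the paper's own machinery and reduce it to a real-rootedness question about an auxiliary graph. Since $\zeta(H,x)=C(R(H),x)$ \cite{zss13} and $R(H)$ is a median graph \cite{zls08}, Theorem~\ref{thm:=forMedianGraph} gives $\zeta(H,x)=Cl\big((R(H))^{\#},x+1\big)$, and by the clique--independence duality recalled in the abstract this equals $I\big(\Gamma,x+1\big)$, where $\Gamma:=\overline{(R(H))^{\#}}$ and $I$ denotes the independence polynomial. Thus it suffices to prove that the shifted polynomial $I(\Gamma,x+1)$ is log-concave.

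The second step is to replace log-concavity by the stronger and better-behaved property of real-rootedness. If $I(\Gamma,x)$ has only real roots then, having positive coefficients and nonzero constant term, all its roots are negative, say $I(\Gamma,x)=\prod_i(x+c_i)$ with $c_i>0$; consequently $I(\Gamma,x+1)=\prod_i(x+1+c_i)$ again has only real negative roots. A polynomial with negative real roots satisfies Newton's inequalities and is therefore log-concave, and since all coefficients of $\zeta(H,x)$ are positive it has no internal zeros. Hence the whole statement follows once $I(\Gamma,x)$ is shown to be real-rooted. I prefer this route because the weaker argument through Hamidoune's theorem \cite{h90}, which only guarantees log-concavity of $I(\Gamma,x)$, would in addition require the substitution $x\mapsto x+1$ to preserve log-concavity, a step that fails for general log-concave sequences.

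To obtain real-rootedness I would invoke the theorem of Chudnovsky and Seymour that the independence polynomial of a claw-free graph has only real roots. It then suffices to prove the structural lemma that $\Gamma=\overline{(R(H))^{\#}}$ is claw-free, equivalently that the crossing graph $(R(H))^{\#}$ contains no induced $K_3\cup K_1$, that is, no triangle together with a vertex adjacent to none of its three vertices. This reformulation already clarifies why the conjecture is confined to resonance graphs rather than to all median graphs: the counterexamples of Example~\ref{exa:NonUnimodalCubeP} have crossing graph $K_n\cup mK_1$, which for $n\geqslant 3$ and $m\geqslant 1$ contains precisely such an induced $K_3\cup K_1$, and it is exactly this configuration that destroys log-concavity there.

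The main obstacle is proving this structural lemma, and it is where the special geometry of hexagonal systems must enter. By Proposition~\ref{pro:CrossinIsometricCycle}, a triangle of $(R(H))^{\#}$ is a triple of $\Theta$-classes that pairwise co-occur on isometric cycles of $R(H)$, while an induced $K_3\cup K_1$ demands a fourth $\Theta$-class crossing none of the three. Translating $\Theta$-classes and their crossings back to the hexagon and edge-cut structure of $H$ through the correspondence underlying $\zeta(H,x)=C(R(H),x)$, I would argue that any class crossing two members of a mutually crossing triple is forced, by the convexity of intervals in the median graph $R(H)$ together with the planarity of $H$, to cross a third, thereby excluding the isolated-vertex-over-triangle pattern. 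I expect this to be the crux and by far the hardest part, since resonance graphs of general hexagonal systems can have intricate crossing graphs, so a genuine structural theorem specific to hexagonal systems appears unavoidable; it is even conceivable that claw-freeness is too strong and must be weakened to a property still sufficient for real-rootedness. Should the claw-free approach stall, a fallback is to establish real-rootedness of $I(\Gamma,x)$ directly by induction on the number of hexagons of $H$, exploiting the peripheral convex expansion recursion for $C(R(H),x)$ and maintaining a root-interlacing invariant, though preserving interlacing through that recursion would itself be delicate.
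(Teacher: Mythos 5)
First, a point of comparison that matters here: this statement is a \emph{conjecture} (quoted from \cite{lpw20}); the paper offers no proof of it and explicitly leaves it, together with Conjecture \ref{con:ClarPisUnimodal}, as future work. So your proposal cannot be measured against a paper proof --- it must stand on its own, and you yourself flag that the ``structural lemma'' is unproven. Your reduction steps are in fact sound: $\zeta(H,x)=C(R(H),x)$ with $R(H)$ median gives $\zeta(H,x)=Cl\bigl((R(H))^{\#},x+1\bigr)=I(\Gamma,x+1)$ for $\Gamma=\overline{(R(H))^{\#}}$ by Theorem \ref{thm:=forMedianGraph}, and a polynomial with only negative real roots stays real-rooted (hence log-concave, with positive coefficients) under $x\mapsto x+1$.

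The fatal problem is that your key lemma is false, and it fails already at one of the smallest hexagonal systems. Take $H$ to be triphenylene: a central hexagon with three hexagons fused to its alternating edges. A direct Clar-cover count gives $\zeta(H,x)=x^3+6x^2+13x+9=(x+1)^3+3(x+1)^2+4(x+1)+1$, so by Theorem \ref{thm:=forMedianGraph} the crossing graph $(R(H))^{\#}$ has $4$ vertices, $3$ edges and $1$ triangle, which forces $(R(H))^{\#}=K_3\cup K_1$ (the central hexagon shares an edge with each outer hexagon and hence never crosses them, while the three pairwise disjoint outer hexagons pairwise cross). Thus $\Gamma=\overline{K_3\cup K_1}=K_{1,3}$ is \emph{itself} a claw, and $I(\Gamma,x)=1+4x+3x^2+x^3$ is the standard minimal example of a non-real-rooted independence polynomial; correspondingly $\zeta(H,x)$ has negative discriminant ($-31$), i.e.\ two complex roots, even though it is log-concave ($9\cdot 6\leqslant 13^2$, $13\cdot 1\leqslant 6^2$). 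So real-rootedness is strictly stronger than the conjectured log-concavity and provably fails, which kills both the Chudnovsky--Seymour route and your fallback of maintaining a root-interlacing invariant toward real-rootedness through the expansion recursion. The same example also undercuts your heuristic about Example \ref{exa:NonUnimodalCubeP}: the induced $K_3\cup K_1$ pattern does occur in crossing graphs of resonance graphs of hexagonal systems without harming log-concavity there; what destroys unimodality in Example \ref{exa:NonUnimodalCubeP} is the large multiplicity $m$ of isolated vertices, not the mere presence of that configuration. Any viable attack on this conjecture must therefore target log-concavity (or unimodality) directly, not via real roots.
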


By Theorems \ref{thm:=forMedianGraph} and \ref{thm:EveryGraphisaCrossingGraph}, the cube polynomials of median graphs, the clique polynomials and the independence polynomials of general graphs can be transformed into each other. Considering the close relationship among these polynomials, here we mentioned the following well-known conjecture proposed by Alavi, Malde, Schwenk and Erd\H{o}s in 1987.

\begin{con}{\em\cite{amse87}}\label{con:IndependencePofTreeisUnimodal}
The independence polynomial of every tree is unimodal.
\end{con}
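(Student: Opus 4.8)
The plan is to prove the conjecture by induction on the structure of the tree, using the standard deletion recursion for the independence polynomial. Writing $I(T,x)=\sum_{k\ge 0} i_k(T)\,x^k$, where $i_k(T)$ counts the independent sets of size $k$ in $T$, note first that $I(T,x)=Cl(\overline{T},x)$, so the conjecture is exactly the assertion that the clique polynomials of complements of trees are unimodal; via Theorems~\ref{thm:=forMedianGraph} and~\ref{thm:EveryGraphisaCrossingGraph} one even has $I(T,x)=C(S(\overline{T}),x-1)$, the cube polynomial of the median graph $S(\overline{T})$ evaluated at $x-1$. This last identity is tempting, but since unimodality is not preserved under the substitution $x\mapsto x-1$, it does not by itself reduce the conjecture to the (now disproved) unimodality of cube polynomials of median graphs; the argument must therefore work directly with $I(T,x)$. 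A useful preliminary reduction is that the coefficient sequence $(i_0(T),\dots,i_{\beta}(T))$, where $\beta$ is the independence number of $T$, is strictly positive (every subset of a maximum independent set is independent), hence has no internal zeros; so it remains only to rule out a strict interior valley.

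First I would root $T$ at a vertex $r$ and split the independence polynomial according to whether $r$ is used: let $A(T,x)$ and $B(T,x)$ be the contributions of the independent sets avoiding and containing $r$, respectively. If $r$ has children whose subtrees are $T_1,\dots,T_d$, then
\begin{align*}
A(T,x)=\prod_{j=1}^{d} I(T_j,x),\qquad B(T,x)=x\prod_{j=1}^{d} A(T_j,x),\qquad I(T,x)=A(T,x)+B(T,x).
\end{align*}
These relations let one build $I(T,x)$ from the polynomials of smaller rooted trees, and any successful induction must be framed around them.

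The naive induction fails immediately, and recognizing why dictates the real strategy: unimodality is preserved neither by the sum $A+B$ nor by the products appearing above, since the modes of the factors can drift apart. The natural fix is to strengthen the inductive hypothesis. I would attempt to prove, by induction, a statement asserting not merely that $A(T,x)$, $B(T,x)$ and $I(T,x)$ are unimodal, but that their mode indices are controlled---say, that they lie within a bounded interval determined by the independence numbers of $T$ and of the $T_j$---together with a compatibility (interlacing-type) condition strong enough to be closed under the product and sum in the display. An alternative, more combinatorial route is an injective proof: for each $k$ below the mode, construct an injection from the size-$k$ independent sets into the size-$(k{+}1)$ independent sets (and the reverse above the mode), for instance by a Hall/flow argument on the bipartite inclusion graph between the size-$k$ and size-$(k{+}1)$ faces of the independence complex of $T$, verifying Hall's condition inductively through leaf deletion.

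The hard part---and the reason this conjecture has been open since 1987---is precisely that the two operations forced by the tree recursion do not preserve unimodality, while the only classically product-stable strengthening, log-concavity, is unavailable because it fails for some trees and so cannot serve as the inductive invariant. A proof must therefore isolate an intermediate property that is simultaneously strong enough to propagate through $\prod_j I(T_j,x)$ and $x\prod_j A(T_j,x)$ and their sum, yet weak enough to actually hold for every tree; equivalently, in the injective formulation, one must locate the mode uniformly and verify Hall's condition level by level. Pinning down this invariant, or establishing the requisite matchings, is the central obstacle, and I would expect the bulk of the effort to go into controlling the location of the mode under the product operation.
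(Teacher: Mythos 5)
There is a fundamental mismatch here: the statement you were asked about is not a theorem of this paper but Conjecture~\ref{con:IndependencePofTreeisUnimodal}, the Alavi--Malde--Schwenk--Erd\H{o}s conjecture of 1987 \cite{amse87}, which the paper explicitly presents as a \emph{well-known open problem}; the paper contains no proof of it, and offers only the related Problem~\ref{prob:CubePareUnimodal} (unimodality of $C(G,x)$ when $G^{\#}$ is a co-tree) as a direction suggested by Theorems~\ref{thm:=forMedianGraph} and~\ref{thm:EveryGraphisaCrossingGraph}. So there is nothing in the paper for your argument to match, and your text should be judged purely on whether it proves the conjecture outright. It does not: it is a research plan, not a proof, and you say as much yourself. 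The concrete gaps are (i) the ``strengthened inductive hypothesis'' controlling mode locations and the ``interlacing-type compatibility condition'' are never formulated, let alone shown to be closed under the products $\prod_j I(T_j,x)$, $x\prod_j A(T_j,x)$ and the sum $A+B$; and (ii) in the injective variant, Hall's condition on the bipartite inclusion graph between size-$k$ and size-$(k{+}1)$ independent sets is never verified --- and verifying it \emph{is} the conjecture, so nothing has been reduced. Announcing that an invariant with the right closure properties must exist is precisely the open problem restated.

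What you get right is worth noting, because it is consistent with the paper's framing: the identity $I(T,x)=Cl(\overline{T},x)$ and, via $G=S(\overline{T})^{\#}=\overline{T}$ together with Theorem~\ref{thm:=forMedianGraph}, the translation $I(T,x)=C(S(\overline{T}),x-1)$ are both correct; your observation that the shift $x\mapsto x-1$ destroys unimodality (so the conjecture does not follow from, nor is refuted by, the paper's disproof of Conjecture~\ref{con:CubePsareUnimodal}) is exactly the right caution; the deletion recursion for rooted trees is standard and correct; the no-internal-zeros reduction is valid; and the remark that log-concavity cannot serve as the inductive invariant is confirmed by the counterexamples of Kadrawi et al.\ \cite{kl23,klym23} cited in Section~5. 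But a correct diagnosis of why naive induction fails, however lucid, is not a proof, and no completion of your plan is currently known. If you want a tractable contribution in the spirit of this paper, Problem~\ref{prob:CubePareUnimodal} --- deciding whether $Cl(\overline{T},x+1)=C(S(\overline{T}),x)$ is unimodal, i.e., the conjecture after the \emph{favorable} shift $x\mapsto x+1$, for which partial positive tools exist \cite{h74,bm99,xx17} --- is the place to start, not the 1987 conjecture itself.
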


In particular, Conjecture \ref{con:IndependencePofTreeisUnimodal} cannot be strengthened up to its log-concave version, which is disproved by Kadrawi et al. \cite{kl23,klym23} by providing counterexamples---several infinite families of trees of order at least 26.

For some special unimodal polynomials $P(x)$, e.g. log-concave polynomials \cite{h74}, nondecreasing polynomials \cite{bm99} or unimodal polynomials with degree $n$ and mode at least $n-4$ \cite{xx17}, $P(x+1)$ is also unimodal. Combined with Theorem \ref{thm:=forMedianGraph}, the following problem related to Conjecture \ref{con:IndependencePofTreeisUnimodal} can be studied in the future. Note that a {\em co-tree} is the complement of a tree.

\begin{prob}\label{prob:CubePareUnimodal}
Is the cube polynomial $C(G,x)$ unimodal if $G^{\#}$ is a co-tree?
\end{prob}

\vskip 0.2 cm
\noindent{\bf Acknowledgements:} The authors thank the referees for their careful reviews. This work is partially supported by National Natural Science Foundation of China (Grants No. 12071194, 11571155, 11961067).

\end{document}